
\documentclass[11pt]{amsart}
\usepackage{amsopn}
\usepackage{amssymb, amscd}
\usepackage{multirow}
\usepackage{graphicx, graphics, epsfig}
\usepackage{faktor}
\usepackage{caption}
\usepackage{booktabs}
\usepackage{ textcomp }
\usepackage{hyperref}
\usepackage{xcolor}

\newcommand\restr[2]{{
		\left.\kern-\nulldelimiterspace 
		#1 
		\vphantom{\big|} 
		\right|_{#2} 
}}

\topmargin 0cm
\evensidemargin 0.5cm
\oddsidemargin 0.5cm
\textwidth 16cm \textheight 23cm

\newcommand{\nc}{\newcommand}

\newcommand{\suchthat}{\;\ifnum\currentgrouptype=16 \middle\fi|\;}
\nc{\zg}{\mathfrak{z} } \nc{\ngo}{\mathfrak{n} } \nc{\kg}{\mathfrak{k} }  
\nc{\mg}{\mathfrak{m} } \nc{\bg}{\mathfrak{b} } \nc{\ggo}{\mathfrak{g} } \nc{\hgo}{\mathfrak{h} }
\nc{\ggob}{\overline{\mathfrak{g}} } \nc{\sog}{\mathfrak{so} }
\nc{\sug}{\mathfrak{su} } \nc{\spg}{\mathfrak{sp} } \nc{\slg}{\mathfrak{sl} }
\nc{\glg}{\mathfrak{gl} } \nc{\cg}{\mathfrak{c} } \nc{\rg}{\mathfrak{r} }
\nc{\hg}{\mathfrak{h} } \nc{\tg}{\mathfrak{t} } \nc{\ug}{\mathfrak{u} }
\nc{\dg}{\mathfrak{d} } \nc{\ag}{\mathfrak{a} } \nc{\pg}{\mathfrak{p} }
\nc{\sg}{\mathfrak{s} } \nc{\affg}{\mathfrak{aff} } \nc{\qg}{\mathfrak{q} }

\nc{\pca}{\mathcal{P}} \nc{\nca}{\mathcal{N}} \nc{\lca}{\mathcal{L}}
\nc{\oca}{\mathcal{O}} \nc{\mca}{\mathcal{M}} \nc{\tca}{\mathcal{T}}
\nc{\aca}{\mathcal{A}} \nc{\cca}{\mathcal{C}} \nc{\gca}{\mathcal{G}}
\nc{\sca}{\mathcal{S}} \nc{\hca}{\mathcal{H}} \nc{\bca}{\mathcal{B}}
\nc{\dca}{\mathcal{D}} \nc{\val}{\operatorname{val}}
\nc{\Spec}{Spec}

\nc{\vp}{\varphi} \nc{\ddt}{\frac{d}{dt}} \nc{\dds}{\frac{d}{ds}}
\nc{\dpar}{\frac{\partial}{\partial t}} \nc{\im}{\mathrm{i}}

\nc{\SO}{\mathrm{SO}} \nc{\Spe}{\mathrm{Sp}} \nc{\Sl}{\mathrm{SL}}
\nc{\SU}{\mathrm{SU}} \nc{\Or}{\mathrm{O}} \nc{\U}{\mathrm{U}} \nc{\Gl}{\mathrm{GL}}
\nc{\Se}{\mathrm{S}} \nc{\Cl}{\mathrm{Cl}} \nc{\Spein}{\mathrm{Spin}}
\nc{\Pin}{\mathrm{Pin}} \nc{\G}{\mathrm{GL}_n(\RR)} \nc{\g}{\mathfrak{gl}_n(\RR)}

\nc{\RR}{{\Bbb R}} \nc{\HH}{{\Bbb H}} \nc{\CC}{{\Bbb C}} \nc{\ZZ}{{\Bbb Z}}
\nc{\FF}{{\Bbb F}} \nc{\NN}{{\Bbb N}} \nc{\QQ}{{\Bbb Q}} \nc{\PP}{{\Bbb P}} \nc{\OO}{{\Bbb O}}

\nc{\vs}{\vspace{.2cm}} \nc{\vsp}{\vspace{1cm}} \nc{\ip}{\langle\cdot,\cdot\rangle}
\nc{\ipp}{(\cdot,\cdot)} \nc{\la}{\langle} \nc{\ra}{\rangle} \nc{\unm}{\tfrac{1}{2}}
\nc{\unc}{\tfrac{1}{4}} \nc{\und}{\tfrac{1}{16}} \nc{\no}{\vs\noindent}
\nc{\lam}{\Lambda^2(\RR^n)^*\otimes\RR^n} \nc{\tangz}{{\rm T}^{\rm Zar}}
\nc{\nor}{{\sf n}}  \nc{\mum}{/\!\!/} \nc{\kir}{/\!\!/\!\!/}
\nc{\Ri}{\tfrac{4\Ric_{\mu}}{||\mu||^2}} \nc{\ds}{\displaystyle}
\nc{\ben}{\begin{enumerate}} \nc{\een}{\end{enumerate}} \nc{\f}{\frac}
\nc{\lb}{[\cdot,\cdot]} \nc{\isn}{\tfrac{1}{||v||^2}}
\nc{\gkp}{(\ggo=\kg\oplus\pg,\ip)} \nc{\ukh}{(\ug=\kg\oplus\hg,\ip)}
\nc{\tgkp}{(\tilde{\ggo}=\kg\oplus\pg,\ip)}
\nc{\wt}{\widetilde} \nc{\mm}{M}
\nc{\iop}{\mathtt{i}} \nc{\jop}{\mathtt{j}}

\nc{\Hess}{\operatorname{Hess}} \nc{\ad}{\operatorname{ad}}
\nc{\Ad}{\operatorname{Ad}} \nc{\rank}{\operatorname{rank}}
\nc{\Irr}{\operatorname{Irr}} \nc{\End}{\operatorname{End}}
\nc{\Aut}{\operatorname{Aut}} \nc{\Inn}{\operatorname{Inn}}
\nc{\Aff}{\operatorname{Aff}} \nc{\aff}{\operatorname{aff}}
\nc{\Der}{\operatorname{Der}} \nc{\Ker}{\operatorname{Ker}}
\nc{\Iso}{\operatorname{Iso}} \nc{\Diff}{\operatorname{Diff}}
\nc{\Lie}{\operatorname{L}} \nc{\tr}{\operatorname{tr}} \nc{\dif}{\operatorname{d}}
\nc{\sen}{\operatorname{sen}} \nc{\modu}{\operatorname{mod}}
\nc{\CRic}{\operatorname{PP}} \nc{\Cric}{\operatorname{P}} \nc{\Ricci}{\operatorname{Ric}}
\nc{\sym}{\operatorname{sym}} \nc{\herm}{\operatorname{herm}} \nc{\symac}{\operatorname{sym^{ac}}}
\nc{\symc}{\operatorname{sym^{c}}} \nc{\scalar}{\operatorname{sc}}
\nc{\grad}{\operatorname{grad}} \nc{\ricci}{\operatorname{Rc}}
\nc{\Nor}{\operatorname{Norm}}  \nc{\ricc}{\operatorname{Rc^{c}}}
\nc{\Ricc}{\operatorname{Ric^{c}}} \nc{\ricac}{\operatorname{Rc^{ac}}}
\nc{\Ricac}{\operatorname{Ric^{ac}}} \nc{\Riem}{\operatorname{Rm}}
\nc{\riccig}{\operatorname{ric^{\gamma}}} \nc{\Rin}{\operatorname{M}}
\nc{\Le}{\operatorname{L}} \nc{\tang}{\operatorname{T}}
\nc{\level}{\operatorname{level}} \nc{\rad}{\operatorname{r}}
\nc{\abel}{\operatorname{ab}} \nc{\CH}{\operatorname{CH}} \nc{\Cone}{\operatorname{C}} \nc{\CCone}{\operatorname{CC}}
\nc{\mcc}{\operatorname{mcc}} \nc{\Adj}{\operatorname{Adj}}
\nc{\Order}{\operatorname{O}}  \nc{\inj}{\operatorname{inj}} \nc{\proy}{\operatorname{pr}}
\nc{\vol}{\operatorname{vol}} \nc{\Diag}{\operatorname{Dg}} \nc{\Diagg}{\operatorname{Diag}}
\nc{\Ima}{\operatorname{Im}} \nc{\Rea}{\operatorname{Re}}
\nc{\spann}{\operatorname{span}}
\nc{\id}{\operatorname{id}}

\theoremstyle{plain}
\newtheorem{theorem}{Theorem}[section]
\newtheorem{proposition}[theorem]{Proposition}
\newtheorem{corollary}[theorem]{Corollary}
\newtheorem{lemma}[theorem]{Lemma}
\newtheorem{question}{Question}
\newtheorem{mainthm}{Theorem}

\theoremstyle{definition}
\newtheorem{definition}[theorem]{Definition}

\theoremstyle{remark}
\newtheorem{remark}[theorem]{Remark}

\newtheorem{example}[theorem]{Example}

\title[On post-Lie algebras structures coming from \\ simply transitive NIL-affine actions]{On post-Lie algebras structures coming from \\ simply transitive NIL-affine actions}

\author{Jonas Der\'e} 
\address{KU Leuven Kulak, E. Sabbelaan 53, BE-8500 Kortrijk, Belgium}
\email{jonas.dere@kuleuven.be}

\author{Marcos Origlia}
\address{Universidad Nacional de C\'ordoba, 5000 C\'ordoba, Argentina}
\email{marcos.origlia@unc.edu.ar}

\thanks{The first author is supported by a start-up grant of KU Leuven (grant number 3E210985). The second author is supported by CONICET (PIBAA grant number 28720210100001CO),
ANPCyT and SECyT-UNC (grant number PICT-2021-0628) and FWO (scientific stay number V510423N)}

\begin{document}

\maketitle

\begin{abstract}
Given a simply connected solvable Lie group $G$, there always exists NIL-affine action $\rho: G \to \Aff(H)$ on a nilpotent Lie group $H$ such that $G$ acts simply transitively. The question whether this is always possible for $H = \RR^n$ abelian was known as Milnor's question, with a negative answer due to a counterexample of Benoist. This counterexample is based on a correspondence between certain affine actions $\rho: G \to \Aff(\RR^n)$ and left-symmetric structures on the corresponding Lie algebra $\ggo$ of $G$, where simply transitive actions correspond exactly to the so-called complete left-symmetric structures. In general however, the question remains open which solvable Lie groups $G$ can act on which nilpotent Lie groups $H$.

A natural candidate for a correspondence on the Lie algebra level is the notion of post-Lie algebra structures, which form the natural generalization of left-symmetric structures. In this paper, we show that every simply transitive NIL-affine action of $G$ on a nilpotent Lie group $H$ indeed induces a post-Lie algebra structure on the pair of Lie algebras $(\ggo,\hgo)$. Moreover, we discuss a new notion of completeness for these structures in the case that $\hgo$ is $2$-step nilpotent, equivalent but different from the known definition for $H = \RR^n$. We then show that simply transitive actions exactly correspond to complete post-Lie algebra structures in the $2$-step nilpotent case. However, the questions how to define completeness in higher nilpotency classes remains open, as we illustrate with an example in the $3$-step nilpotent case.
\end{abstract}

\tableofcontents

\section{Introduction}

Let $G$ be a connected and simply connected (hereinafter abbreviated as $1$-connected) solvable Lie group of dimension $n$, then Milnor showed that $G$ admits a smooth action on $\RR^m$ with $m \geq n$ via affine transformations $\Aff(\RR^m)$ that is free, see \cite{miln77-1}. This raised the problem whether this result could be improved to a smooth action $\rho:G\to \Aff(\RR^n)$ such that $G$ acts simply transitive on $\RR^n$, meaning that there exists for every $x, y \in \RR^n$ a unique element $g \in G$ such that $\rho_g (x) = y$. This problem was known in the literature as \textbf{Milnor's question}, with some partial positive results, for instance in the case of nilpotent Lie groups having an expanding automorphism in \cite{sche72} or groups up to nilpotency class $3$ in \cite{sche74}.

A important step in understanding these actions was given in \cite{kim86-1}, where it was shown that simply transitive affine actions of $G$ are in one-to-one correspondence to \textbf{complete left-symmetric structures} (sometimes also called pre-Lie agebra structures) on the corresponding Lie algebra $\ggo$ of $G$. We will recall the exact concept in Definition \ref{complete_LS}, together with some background on the relation to affine actions. For almost $20$ years it was believed that the answer to Milnor's question was positive, meaning that any solvable Lie algebra $\ggo$ admits a complete left-symmetric structure. However, in \cite{beno95-1} Benoist gave the first example of a $11$-dimensional nilpotent (and thus in particular solvable) $1$-connected Lie group that does not admit such a simply transitive affine action. Later, Burde generalized this to a family of examples in dimension $11$ in \cite{bg95-1} and also exhibited a new example in dimension $10$, moreover proving that any filiform nilpotent Lie algebra up to dimension 9 admits a left-symmetric structure \cite{burd96-1}. The question whether there exists a nilpotent Lie algebra of dimension $\leq 9$ not admitting a complete left-symmetric structure is still open in general.

Because of the negative answer to Milnor's question, people broadened the geometric context and studied more general affine transformations on $1$-connected nilpotent Lie groups $H$. Let $\Aff(H) =  H \rtimes \Aut(H)$ be the group of affine transformations of $H$, which acts on $H$ via  $$(m,\alpha)\cdot n = m \cdot \alpha(n)$$ with $ m, n \in H, \alpha \in \Aut(H)$. A smooth action $\rho: G \to \Aff(H)$ is called \textbf{NIL-affine action} of the group $G$ on the nilpotent group $H$. Note that if $H=\RR^n$ is abelian, we obtain the usual affine group $\Aff(\RR^n)$ and thus the regular notion of affine actions. In the NIL-affine case, Dekimpe \cite{deki98-1} proved that for any solvable Lie group $G$, there exist a nilpotent Lie group $H$ and a NIL-affine action $\rho:G\to \Aff(H)$ letting $G$ act simply transitive on $H$, giving a positive answer to Milnor's question in this more general setting. Note that the converse is also true, namely if a group acts simply transitively on a $1$-connected nilpotent Lie group, it must be solvable, by the observation that Lie groups homeomorphic to $\RR^n$ with a faithful linear representation are always solvable.

However, the question which solvable Lie group $G$ can act on which nilpotent Lie group $H$ remains open in general. As mentioned above, the case when $H = \RR^n$ is abelian was characterized in \cite{kim86-1} by the relation to complete left-symmetric structures and thus properties of the corresponding Lie algebra $\ggo$ of $G$. Similarly, Burde, Dekimpe and Deschamps in \cite{bdd09-1} analyzed the particular case when the Lie group $G$ is nilpotent, by relating it to properties of the Lie algebras $\ggo$ and $\hgo$ corresponding to $G$ and $H$. As an application, they showed that for every $n \leq 5$ and any $1$-connected nilpotent Lie groups $G$ and $H$ of dimension $n$, there exists a simply transitive NIL-affine action $\rho: G \to \Aff(H)$. They also exhibited an $6$-dimensional nilpotent Lie group $H$ such that $\RR^6$ cannot act simply transitive by NIL-affine actions on $H$. Very recently, the general case of simply transitive NIL-affine actions of a solvable Lie group $G$ on a nilpotent Lie group $H$ was considered in \cite{DerOri21}. Indeed, the existence of such an action is linked to certain embeddings of the semi-simple splitting into $\aff(\hgo)$ in \cite[Theorem 3.6]{DerOri21}, making it possible to describe the possibilities up to dimension $\leq 4$.

A post-Lie algebra structure (abbreviated as PLAS from now on) on a pair $(\ggo,\hgo)$ of Lie algebras defined on the same vector space $V$ is a product on $V$ which is related to both Lie brackets and generalizes the notion of left-symmetric structure, see Definition \ref{def_PLAS}. The concept of a PLAS was introduced in \cite{vall7} for studying partition posets, see also \cite{loda98-1} for the relation to Koszul operads. Afterwards, many other applications were studies, including isospectral flows and Yang-Baxter equations as in \cite{eflmmk15-1}. Using the aforementioned result of \cite{bdd09-1}, Burde, Dekimpe and Vercammen in \cite{bdv12-1} gave a correspondence between simply transitive NIL-affine actions $\rho: G \to \Aff(H)$ for $G$ nilpotent and a subfamily of PLAS on the pair of Lie algebras $(\ggo,\hgo)$, where $\ggo$ is isomorphic to the Lie algebra corresponding to $G$. 

More specifically, a PLAS is called \textbf{complete} if and only the left multiplication maps are nilpotent, and \cite{bdd09-1} shows that complete PLAS correspond to simply transitive actions. This is in contrast to the original definition of completeness in \cite{kim86-1} that uses the right multiplication map. However, Segal studied left-symmetric structures in a more algebraic way, improving the understanding of completeness in \cite{seg92-1}, and these results imply that the two notions of completeness above correspond, as we will also explain in Section \ref{sec:complete}. 

The question remains open whether there exist a correspondence between NIL-affine actions of a solvable Lie group $G$ on a nilpotent Lie group $H$, and a subfamily of post-Lie algebra structures, similarly as in the nilpotent case. The first main goal of this paper is to show that each simply transitive action indeed induces a PLAS.

\begin{mainthm}[Corollary \ref{cor:PLAS}]\label{thm:mainA}
	Every simply transitive action $\rho: G \to \Aff(H)$ with corresponding Lie algebras $\ggo$ and $\hgo$ induces a PLAS on a pair $(\tilde{\ggo},\hgo)$ with $\tilde{\ggo}$ isomorphic to $\ggo$.
\end{mainthm}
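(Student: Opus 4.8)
The plan is to read off, from the differential $d\rho\colon\ggo\to\aff(\hgo)$, the linear data needed to build a post-Lie product, and then transport the bracket of $\ggo$ onto the underlying vector space of $\hgo$. Since $\aff(\hgo)=\hgo\rtimes\Der(\hgo)$ as a Lie algebra, we may write $d\rho(X)=(t(X),D(X))$ with $t\colon\ggo\to\hgo$ and $D\colon\ggo\to\Der(\hgo)$ linear. The first step is to observe that simple transitivity forces $t$ to be a \emph{linear isomorphism}: the orbit map $\operatorname{ev}\colon G\to H$, $g\mapsto\rho(g)\cdot e_H$, is a diffeomorphism, and since $\rho(g)=(m(g),\alpha(g))$ acts by $(m(g),\alpha(g))\cdot e_H=m(g)$, one has $\operatorname{ev}=m$ and hence $t=d(\operatorname{ev})_{e_G}$, which is therefore invertible. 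This is precisely where the hypothesis of a \emph{simply transitive} action is used, and it is essentially contained in the structural description of $d\rho$ that precedes this corollary.

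Given that $t$ is invertible, I would set $V:=\hgo$ as a vector space, retain its bracket $[\cdot,\cdot]_{\hgo}$, and define a second bracket on $V$ by $[x,y]_{\tilde\ggo}:=t\big([t^{-1}x,t^{-1}y]_{\ggo}\big)$. Then $t\colon\ggo\to\tilde\ggo$ is by construction a Lie algebra isomorphism, so $\tilde\ggo\cong\ggo$, and $(\tilde\ggo,\hgo)$ is a pair of Lie algebra structures on the same space, as required by Definition \ref{def_PLAS}. The post-Lie product is then defined by $x\cdot y:=D(t^{-1}x)(y)$; that is, the left multiplication $L_x$ is the derivation $D(t^{-1}x)$ of $\hgo$.

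It remains to verify the three axioms of a PLAS, and each reduces to a formal property of $d\rho$. The identity $x\cdot[y,z]_{\hgo}=[x\cdot y,z]_{\hgo}+[y,x\cdot z]_{\hgo}$ holds because $L_x=D(t^{-1}x)\in\Der(\hgo)$. The identity $[x,y]_{\tilde\ggo}\cdot z=x\cdot(y\cdot z)-y\cdot(x\cdot z)$, i.e.\ $L_{[x,y]_{\tilde\ggo}}=[L_x,L_y]$, holds because the projection $\aff(\hgo)=\hgo\rtimes\Der(\hgo)\to\Der(\hgo)$ is a Lie algebra homomorphism, so $D=\proy_{\Der}\circ\,d\rho$ is a homomorphism $\ggo\to\Der(\hgo)$, and composing with the isomorphism $t$ turns $x\mapsto L_x$ into a homomorphism $\tilde\ggo\to\Der(\hgo)$. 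Finally, for the commutator axiom one computes the $\hgo$-component of the bracket in $\aff(\hgo)$:
\[
[d\rho(X),d\rho(Y)]=\big([t(X),t(Y)]_{\hgo}+D(X)(t(Y))-D(Y)(t(X)),\ [D(X),D(Y)]\big),
\]
and since $d\rho$ is a Lie algebra homomorphism this $\hgo$-component equals $t([X,Y]_{\ggo})$; substituting $X=t^{-1}x$ and $Y=t^{-1}y$ yields $x\cdot y-y\cdot x=[x,y]_{\tilde\ggo}-[x,y]_{\hgo}$, which is exactly the remaining axiom (up to the orientation convention fixing $\hgo\hookrightarrow\aff(\hgo)$, which may permute the roles of $x,y$).

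The only genuinely non-formal step is the first one, establishing that $t$ is a linear isomorphism; everything afterwards is a direct unwinding of the semidirect-product bracket on $\aff(\hgo)$ together with the fact that $d\rho$ is a Lie algebra homomorphism. I would therefore organize the write-up so that the isomorphism property of $t$ (together with the decomposition $d\rho=(t,D)$) is isolated in, or deduced from, the preceding proposition, and present the PLAS construction above as its corollary.
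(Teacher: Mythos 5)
Your proof is correct, but it reaches the crucial step --- the bijectivity of $t$ --- by a genuinely different route than the paper. The paper (Theorem \ref{from ST to PLAS}) works entirely at the level of the algebraic hull inside $\aff(\hgo)$: it invokes the characterization of simple transitivity from \cite[Theorem 3.4]{DerOri21} (Theorem \ref{thm_closure}), writes $\overline{\varphi(\ggo)} = \mathfrak{u}_{\varphi(\ggo)} + \mathfrak{s}$ with $\mathfrak{s}\subset\Der(\hgo)$ a maximal torus killed by the projection $p$ (Remark \ref{remark_ss}), and deduces $t(\ggo)=p(\mathfrak{u}_{\varphi(\ggo)})=\hgo$ by a dimension count. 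You instead observe that the orbit map $g\mapsto\rho(g)\cdot e_H$ equals the translational part $m$ of $\rho$, that simple transitivity makes it a diffeomorphism $G\to H$ (trivial stabilizer in the standard homogeneous-space theorem), and that $t=dm_{e_G}$ is therefore invertible; this is a valid and more elementary argument, self-contained at the level of smooth actions and requiring none of the algebraic-group machinery. What the paper's longer route buys is the structural information $\overline{\varphi(\ggo)}=\varphi(\ggo)+\mathfrak{s}$ and the Lie-algebra-level if-and-only-if criterion, both of which are reused later (e.g.\ in the first proof of Theorem \ref{thm_ST} and in the completeness discussion), whereas your orbit-map argument only yields the one implication needed here. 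The remainder of your write-up --- transporting the bracket via $[x,y]_{\tilde\ggo}=t([t^{-1}x,t^{-1}y]_{\ggo})$, setting $L_x=D(t^{-1}x)$, and checking the three axioms from the semidirect-product bracket \eqref{eq: lie bracket aff} --- coincides with Remark \ref{PLA} and Proposition \ref{subalgebras}, and your verification of the axioms is the "direct check" the paper delegates to \cite[Proposition 2.12]{bdv12-1}.
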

\noindent The proof in Section \ref{sec:toplas} below gives an explicit way to compute the PLAS from the derivative $\varphi = d\rho: \ggo \to \aff(\hgo)$. Secondly, we give a characterization of completeness in the $2$-step nilpotent case, generalizing both the notion of completeness from \cite{kim86-1} and \cite{bdd09-1}. 

\begin{mainthm}[Theorem \ref{thm main B}]\label{thm:mainB}
		Let	$G, H$ be $1$-connected Lie groups, with $G$ solvable and $H$ $2$-step nilpotent.  There exists a simply transitive NIL-affine action $\rho: G \to \Aff(H)$ if and only if there exists a Lie algebra $\tilde{\ggo}$, isomorphic to $\ggo$, defined under the same vector space as $\hgo$ and the pair $ (\tilde{\ggo},\hgo)$ admits a PLAS with $R_y - \frac{1}{2} \ad^{\hgo}_y$ nilpotent for every $y \in \hgo$, where $R_y$ is the right multiplication induced by the PLAS. 
\end{mainthm}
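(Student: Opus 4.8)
The plan is to combine Theorem A with the existing theory of completeness in the $2$-step nilpotent setting, treating the ``only if'' and ``if'' directions separately. For the forward direction, suppose $\rho: G \to \Aff(H)$ is simply transitive with $H$ $2$-step nilpotent. By Theorem A (Corollary \ref{cor:PLAS}), the derivative $\vp = d\rho: \ggo \to \aff(\hgo)$ produces a PLAS on a pair $(\tilde{\ggo}, \hgo)$ with $\tilde{\ggo} \cong \ggo$, defined on the same vector space as $\hgo$, and the explicit formula from Section \ref{sec:toplas} expresses the product $x \cdot y$ in terms of $\vp$ and the bracket of $\hgo$. I would then unwind what simple transitivity says about $\vp$: as in \cite{deki98-1, DerOri21}, the map $V \to H$, $x \mapsto \rho_{\exp(x)}(e_H)$ must be a diffeomorphism, and at the infinitesimal level this forces the ``affine part'' of $\vp$, i.e. the translational component, to be a linear isomorphism $\ggo \to \hgo$ while the linear part lands in $\Der(\hgo)$. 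The key computation is to identify the right multiplication $R_y$ of the PLAS: because the PLAS is built so that $x \cdot y + y \cdot x = [x,y]_{\tilde\ggo} - [x,y]_{\hgo} + \text{(correction)}$, one should get $R_y = (\text{linear part of } \vp \text{ evaluated along } y) + \tfrac12 \ad^{\hgo}_y$ plus nilpotent pieces; the shift by $\tfrac12 \ad^{\hgo}_y$ is exactly the symmetrization defect coming from the $2$-step bracket. Hence $R_y - \tfrac12 \ad^{\hgo}_y$ equals (up to nilpotent terms) the linear part of $\vp$ along $y$, which is nilpotent precisely because $H$ is nilpotent and the action is simply transitive (the linear holonomy of a simply transitive NIL-affine action is unipotent).

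For the converse, assume we are given $\tilde{\ggo} \cong \ggo$ on the vector space underlying $\hgo$ together with a PLAS on $(\tilde{\ggo}, \hgo)$ such that $R_y - \tfrac12 \ad^{\hgo}_y$ is nilpotent for all $y$. The goal is to reconstruct a simply transitive action. I would define $\vp: \tilde\ggo \to \aff(\hgo)$ by the standard recipe attached to a PLAS (left multiplication into $\Der(\hgo)$ composed with the semidirect structure $\aff(\hgo) = \hgo \rtimes \Der(\hgo)$, with the identity map as translational part), check that $\vp$ is a Lie algebra homomorphism using the two PLAS axioms, and then integrate it to $\rho: G \to \Aff(H)$ via the simple connectivity of $G$. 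The crux is showing $\rho$ is simply transitive, equivalently that $x \mapsto \rho_{\exp x}(e_H)$ is a bijection; by the results recalled after Theorem A and in Section \ref{sec:complete}, this reduces to a completeness/nilpotency condition on the vector fields generating the action, which in the $2$-step case is governed exactly by the nilpotency of $R_y - \tfrac12 \ad^{\hgo}_y$. Here I expect to invoke an analogue of Segal's argument \cite{seg92-1}: the fundamental vector fields are polynomial, and the flow is complete and the orbit map a diffeomorphism iff a certain associated operator is nilpotent; the $2$-step bracket contributes the quadratic term whose linearization produces the $\tfrac12 \ad^{\hgo}_y$ correction.

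The main obstacle, and where I would spend most of the effort, is the precise bookkeeping of the correction term $\tfrac12 \ad^{\hgo}_y$ and verifying that the ``nilpotent pieces'' genuinely split off cleanly in both directions. Concretely: in an affine action on a $2$-step nilpotent $H$, the orbit map in exponential coordinates has the form $x \mapsto x + (\text{quadratic in } x) + \cdots$, and its Jacobian being everywhere invertible is equivalent to nilpotency of a matrix built from the linear part of $\vp$ and the structure constants of $\hgo$; I must show this matrix is conjugate (or equal, after the right choice of coordinates) to $R_y - \tfrac12 \ad^{\hgo}_y$. This is the step that genuinely uses $2$-step nilpotency — in higher nilpotency classes the higher-order terms in the Baker–Campbell–Hausdorff expansion would introduce further corrections, which is precisely why the paper flags the general case as open. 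I would also need to confirm that the two notions of completeness (via $L_x$ as in \cite{bdd09-1} and via the shifted $R_y$) agree in this setting, paralleling the left/right multiplication equivalence from \cite{seg92-1} recalled in Section \ref{sec:complete}, so that Theorem B is genuinely a common generalization of the abelian case and the nilpotent-$G$ case.
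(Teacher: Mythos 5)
Your overall plan (use Theorem A to produce the PLAS, then characterize simple transitivity by a nilpotency condition on a shifted right multiplication) points in the right direction, but the forward direction as written contains a genuine error and the crux of both directions is missing. The error: you claim that $R_y-\tfrac12\ad^{\hgo}_y$ agrees, up to nilpotent pieces, with the linear part of $\varphi$ along $y$, and that this is nilpotent because ``the linear holonomy of a simply transitive NIL-affine action is unipotent.'' That last statement is only true when $G$ is nilpotent (Theorem \ref{thm:kd}); for solvable non-nilpotent $G$ it fails, and Example \ref{r'_{3,0} PLAS} is an explicit counterexample: there $L_x=D(t^{-1}(x))$ has eigenvalues $\{0,\pm \im x_3\}$, so the derivation part of $\varphi$ is not nilpotent even though the action is simply transitive. (Note also that the linear part of $\varphi$ evaluated along $y$ is $L_y$, not $R_y$ --- the right multiplication is obtained by the transposition of Remark \ref{rmk:lefttoright} --- and ``nilpotent up to nilpotent pieces'' does not yield nilpotency, since a sum of nilpotent operators need not be nilpotent.)

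The missing idea, which makes both directions work simultaneously, is the injective Lie algebra morphism $\psi:\aff(\hgo)\to\aff(V)$ given by $\psi(x,D)=(x,D+\tfrac12\ad_x)$ (Proposition \ref{plas-LS for 2-step}); this is precisely where $2$-step nilpotency enters, it is algebraic and preserves the first component, so the action $\rho$ is simply transitive if and only if the affine action $\Psi\circ\rho$ on $V$ is (Theorem \ref{thm_ST}, proved either via Theorem \ref{thm_closure} or by an explicit computation with $\exp$). This reduces everything to the abelian case, where Kim's Theorem \ref{thm_affinest} together with Segal's theorem says that simple transitivity is equivalent to nilpotency of the right multiplication $\tilde R_y$ of the induced left-symmetric structure; since $\tilde L_x=L_x+\tfrac12\ad^{\hgo}_x$ gives $\tilde R_y=R_y-\tfrac12\ad^{\hgo}_y$ (Propositions \ref{from PLAS to LS} and \ref{complete PLAS - complete LS}), the stated condition drops out. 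Your proposed alternative --- redoing a Segal-type analysis of the orbit map in exponential coordinates directly on $H$ --- might be made to work, but you have not carried out the key identification of the Jacobian condition with nilpotency of $R_y-\tfrac12\ad^{\hgo}_y$, and without the homomorphism $\psi$ you cannot simply quote \cite{seg92-1}, whose argument lives entirely in the left-symmetric (abelian $\hgo$) setting. As it stands, both directions of your proposal rest on unproven claims, one of which is false.
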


The term $- \frac{1}{2} \ad^{\hgo}_y$ did not occur in any of the previous definitions of completeness, nor is it sufficient in higher nilpotency class, see Example \ref{dim4_not enough}. We do show that it is equivalent to the known definitions, namely the case when $\hgo$ is abelian \cite{kim86-1} and the case with $\ggo$ nilpotent \cite{bdv12-1}.

\section{Preliminaries}

In this section we recall all known results about simply transitive affine and NIL-affine actions. The main goal is to characterize the pairs $(G,H)$ for $G$ and $H$ $1$-connected Lie groups with $G$ solvable and $H$ nilpotent such that there exists a simply transitive action $\rho: G \to \Aff(H)$, by relating it to the existence of post-Lie algebra structures.

In this paper, we will consider several Lie algebras on the same real vector space $V$. A Lie algebra is then defined by a Lie bracket $[\cdot,\cdot]: V \times V \to V$ where a subscript indicates which Lie algebra we are working with, for example the Lie algebra \(\ggo = (V, [\cdot,\cdot]_\ggo)\) has Lie bracket $[\cdot,\cdot]_\ggo$.

\subsection{Affine case $\RR^n$} In the classical case $H= \RR^n$, the existence of a simply transitive affine action of $G$ on $\RR^n$ was described in \cite{kim86-1}, by relating it to left-symmetric structures, of which we recall the definition. 

\begin{definition}\label{complete_LS}
	A \textit{left-symmetric structure} or LS on a Lie algebra $\ggo = (V,[\cdot,\cdot]_\ggo)$ is a bilinear product $$\cdot: V \times V \to V$$ satisfying, for all $x, y, z \in V$, the following properties:
	\begin{enumerate}
		\item $x \cdot y - y \cdot x= [x, y]_\ggo$
		\item $[x, y]_\ggo \cdot z =  x \cdot (y \cdot z) - y \cdot (x \cdot z)$.
	\end{enumerate}
	Moreover, a left-symmetric structure on $\ggo$ is called {\it complete} if for all $y \in V$ it holds that $\id + R_y$ is an isomorphism, where $R_y: V \to V$ is the right multiplication map. 
\end{definition}
Similarly, one can define a left-symmetric algebra as a vector space $V$ with the product $\cdot$ as in the definition above, and define the Lie algebra $\ggo$ via condition (1). As we will always start from a given Lie algebra $\ggo$, we do not use this concept in our paper.

Although completeness was originally defined as in the definition above, Segal gives a different (and a priori stronger) characterization in \cite[Theorem 1]{seg92-1}

\begin{theorem}
Let $\ggo$ be a Lie algebra, then a left-symmetric structure $\cdot$ is complete if and only if $R_y$ is nilpotent for every $y \in V$. 
\end{theorem}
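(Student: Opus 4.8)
The plan is to prove Segal's characterization in two directions, the nontrivial one being that completeness (i.e.\ $\id + R_y$ invertible for all $y$) forces each $R_y$ to be nilpotent. The easy direction is immediate: if every $R_y$ is nilpotent, then $\id + R_y$ is unipotent, hence an isomorphism. For the hard direction, the key structural fact is that the right multiplications generate (together with left multiplications via the defining identities) a Lie algebra of affine transformations; more precisely, completeness of the LS structure is equivalent to the simple transitivity of the associated affine action of the $1$-connected group $G$ on $\RR^n$. So the first step is to record this dictionary: $\varphi\colon\ggo\to\aff(\RR^n)$, $x\mapsto (L_x, x)$ (the linear part $L_x$ plus translation part $x$), is a Lie algebra homomorphism precisely by the LS axioms, and the orbit map at $0\in\RR^n$ has derivative $x\mapsto x$, hence is a local diffeomorphism; completeness exactly guarantees this orbit map is a global diffeomorphism, i.e.\ the action is simply transitive.

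Granting that, I would argue as follows. Because $G$ acts simply transitively on $\RR^n$, it is in particular a simply connected solvable Lie group acting by affine transformations, and we may pass to the Zariski closure (algebraic hull) of $\varphi(G)$ inside $\Aff(\RR^n)$. The crucial input is a theorem of the Lie--Kolchin / Auslander type: a solvable algebraic subgroup of $\Aff(\RR^n)$ acting simply transitively can be conjugated into a form where, after choosing suitable coordinates adapted to a flag, the action is by unipotent transformations up to a torus part, and the simple transitivity forces the torus part to act trivially on the translation directions in a way that, on the Lie algebra level, makes the translation components of $\varphi(\ggo)$ ``dominate.'' Concretely, Segal's argument is that one can filter $V=\RR^n$ by $\varphi(G)$-invariant subspaces so that the induced action on each graded piece is by translations, which forces the linear parts $L_x$, and by the LS identities also the right multiplications $R_y$, to be nilpotent with respect to this filtration. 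I would build this filtration explicitly: let $W_1 = \{\, x\cdot v : x\in V, v\in V\,\}$-type subspaces, or rather iterate the subspace spanned by all $R_y v$; the LS axiom (2) rewritten as $R_{y\cdot z} = R_z R_y + [L_y,R_z]$ (or the analogous relation) shows this is an invariant flag, and completeness prevents it from stabilizing at a nonzero subspace.

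The main obstacle, and the step requiring genuine care, is precisely establishing that the natural candidate flag strictly descends to $0$ — equivalently, ruling out a nonzero common ``fixed direction'' for all the $R_y$ that would be compatible with $\id+R_y$ invertible but obstruct nilpotency. This is where simple transitivity (not just freeness or local transitivity of the affine action) is used in an essential way: a semisimple (toral) component in the closure of the linear parts would produce a periodic or bounded orbit behaviour inconsistent with the orbit map being a global diffeomorphism onto $\RR^n$. I would handle this by the standard argument that a $1$-connected solvable group acting simply transitively and affinely on $\RR^n$ has its algebraic hull with trivial maximal reductive part acting on the relevant quotient, invoking Auslander's results on such actions; then nilpotency of the linear parts is automatic and nilpotency of $R_y$ follows from axiom (1), which gives $R_y = L_y - \ad^{\ggo}_y$ restricted appropriately, together with the fact that $L_y$ and the relevant $\ad$'s are simultaneously triangularizable with the $L_y$ strictly so.

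An alternative, more self-contained route I would consider — and probably the one to present if we want to avoid algebraic-group machinery — is purely algebraic, following Segal directly: define $A_k \subseteq V$ inductively by $A_0 = 0$ and $A_{k+1} = \{\, v \in V : R_y v \in A_k \text{ and } v \in A_k + V\cdot V \text{ for all } y\,\}$, show using the LS identities that $\bigcup_k A_k$ is a subalgebra-compatible filtration, and prove $\bigcup_k A_k = V$ by assuming otherwise and deriving that $\id + R_y$ fails to be surjective (or injective) on the quotient $V/\bigcup_k A_k$, contradicting completeness. On $\bigcup_k A_k = V$ each $R_y$ is then visibly nilpotent because it lowers the filtration index. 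The delicate point in this version is checking that the filtration is genuinely invariant and that the failure of $\id + R_y$ really does descend to the quotient; I expect this to be the bulk of the work, but it is routine manipulation of axioms (1) and (2) once the right filtration is identified. Since the excerpt already cites \cite{seg92-1} for the statement, I would in the actual paper simply reference it, but the above is how I would reconstruct the proof.
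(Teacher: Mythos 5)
The paper does not prove this statement: it is quoted from Segal with the explicit remark that ``the main method for this theorem was to show that completeness is preserved under field extensions.'' Your reconstruction never touches that mechanism, and this is not a cosmetic omission — it is the crux. Since $y\mapsto R_y$ is linear, if some $R_y$ had a nonzero \emph{real} eigenvalue $\lambda$ then $R_{-y/\lambda}$ would have eigenvalue $-1$ and $\id+R_{-y/\lambda}$ would be singular, contradicting completeness outright; so the only genuine difficulty is ruling out nonzero \emph{non-real} eigenvalues, and that is exactly what the field-extension lemma (completeness of the complexified left-symmetric algebra) is for. Neither of your two routes identifies this reduction or supplies a substitute for it.

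Moreover, your first route rests on a claim that is false. You assert that simple transitivity forces, after passing to the algebraic hull, the linear parts $L_x$ to be nilpotent, and you then deduce nilpotency of $R_y$ from $R_y=L_y-\ad^{\ggo}_y$ together with a strict triangularization of the $L_y$. For a merely solvable (non-nilpotent) $\ggo$ this fails: Example \ref{LS no PLAS} of the paper gives a complete left-symmetric structure on $\mathfrak r_{3,\lambda}$ with $L_x=\mathrm{diag}(0,x_1,\lambda x_1)$ semisimple and non-nilpotent (and $\ad^{\ggo}_y$ non-nilpotent as well), while $R_y$ is strictly lower triangular. What Theorem \ref{thm_closure} and Remark \ref{remark_ss} actually give is that the \emph{unipotent radical} of the closure $\overline{\varphi(\ggo)}$ projects bijectively onto $\RR^n$; the subalgebra $\varphi(\ggo)$ itself may have semisimple linear parts, so no $\varphi(G)$-invariant flag with translation-only graded action exists in general. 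Your second, ``self-contained'' route defers the entire content (``routine manipulation of axioms (1) and (2) once the right filtration is identified'') without verifying that the proposed filtration is invariant, exhausts $V$, or interacts correctly with $\id+R_y$; as written it is a plan, not a proof. The correct repair is Segal's: prove the easy direction as you did, prove that completeness passes to $V\otimes_\RR\CC$, and conclude by the rescaling argument applied over $\CC$.
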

\noindent The main method for this theorem was to show that completeness is preserved under field extensions. For nilpotent Lie algebras $\ggo$, completeness can also be characterized via the left multiplication map, see \cite[Theorem 2.1.]{kim86-1} and \cite[Theorem 2]{seg92-1}.

\begin{theorem}
	\label{thm_completenil}
Let $\ggo$ be a nilpotent Lie algebra, then a left-symmetric structure $\cdot$ is complete if and only if the left multiplication $L_x$ is nilpotent for every $x \in V$.
\end{theorem}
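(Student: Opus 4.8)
The plan is to reduce the statement, via Segal's characterization \cite[Theorem~1]{seg92-1} recalled just above, to the equivalence ``$R_y$ nilpotent for all $y$ $\iff$ $L_x$ nilpotent for all $x$'' for a nilpotent Lie algebra $\ggo$ carrying a left-symmetric structure, and then to prove each implication geometrically. Throughout one uses the two operator identities coming from the axioms, $L_x-R_x=\ad^{\ggo}_x$ (from~(1)) and $L_{[x,y]_\ggo}=[L_x,L_y]$ (from~(2)), the second saying that $x\mapsto L_x$ is a morphism of Lie algebras $\ggo\to\glg(V)$. Recall moreover from the discussion around Definition~\ref{complete_LS} that a complete left-symmetric structure on $\ggo$ is the same datum as a simply transitive affine action $\rho\colon G\to\Aff(\RR^n)$ of the $1$-connected Lie group $G$ with Lie algebra $\ggo$, where $n=\dim V$ and $d\rho(x)=\bigl(\begin{smallmatrix}L_x&x\\0&0\end{smallmatrix}\bigr)\in\aff(\RR^n)$, so that $x\mapsto L_x$ is exactly the linear part of the action.

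For the implication ``$L_x$ nilpotent for all $x$ $\Rightarrow$ complete'', set $\phi\colon\ggo\to\aff(\RR^n)$, $\phi(x)=\bigl(\begin{smallmatrix}L_x&x\\0&0\end{smallmatrix}\bigr)$; using the identities above $\phi$ is a morphism of Lie algebras, it is injective (since $\phi(x)=0$ forces $x=0$), and the formula $\phi(x)^{\,j}=\bigl(\begin{smallmatrix}L_x^{\,j}&L_x^{\,j-1}x\\0&0\end{smallmatrix}\bigr)$ for $j\geq1$ shows that each $\phi(x)$ is a nilpotent operator. Hence $\rho(G)=\exp(\phi(\ggo))$ is a $1$-connected nilpotent group of \emph{unipotent} affine transformations; the orbit map $\beta\colon G\to\RR^n$, $g\mapsto\rho(g)(0)$, is then a polynomial map whose differential at $e$ sends $x$ to $x$ (hence is invertible) and whose image is a closed submanifold, because orbits of a unipotent group acting on affine space are closed. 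Since the stabilizer of $0$ in $G$ has trivial Lie algebra (it is $\{x:\phi(x)\text{ vanishes at }0\}=0$), $\beta$ is an immersion between manifolds of equal dimension, hence a local diffeomorphism, and its image being open and closed equals $\RR^n$; being equivariant under the transitive $G$-action it is a genuine covering map, which is trivial since $\RR^n$ is simply connected. Thus $\beta$ is a diffeomorphism, $G$ acts simply transitively, and the left-symmetric structure is complete.

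For the converse ``complete $\Rightarrow$ $L_x$ nilpotent for all $x$'', completeness gives a simply transitive affine action of the $1$-connected nilpotent group $G$ on $\RR^n$, and one then invokes the classical fact that a simply transitive affine action of a $1$-connected nilpotent Lie group has unipotent linear part, which by the dictionary above says precisely that every $L_x$ is nilpotent. The step I expect to be the main obstacle is exactly this last classical fact about simply transitive affine actions of nilpotent Lie groups, which is genuinely nontrivial; a purely algebraic alternative is an induction on $\dim V$, using Engel's theorem for $x\mapsto L_x$ to produce a flag of left ideals of the left-symmetric algebra and the nilpotency of $\ggo$ to upgrade a term of this flag to a \emph{two-sided} ideal to which the inductive hypothesis applies. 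Producing such a two-sided ideal is the delicate point of that approach (the Engel flag consists of left ideals only, and an ideal flag of $\ggo$ need not be compatible with it), and it is precisely the reduction that Segal carries out by showing that completeness descends and ascends along field extensions, which is the technical core of \cite[Theorem~1]{seg92-1} and the device I would ultimately rely on.
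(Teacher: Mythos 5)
The paper does not actually prove Theorem~\ref{thm_completenil}: it is recalled as a known result with citations to \cite[Theorem 2.1]{kim86-1} and \cite[Theorem 2]{seg92-1}, so there is no internal argument to compare against. Your proposal is a genuine proof sketch and is essentially correct, but be aware of where it is self-contained and where it is not. The direction ``$L_x$ nilpotent for all $x$ $\Rightarrow$ complete'' is done properly: the matrix computation showing $\phi$ is a morphism with nilpotent image is right, Engel's theorem makes $\exp(\phi(\ggo))$ unipotent, Kostant--Rosenlicht gives closedness of the orbit, openness comes from the invertible differential of the orbit map, and the covering-space argument over the simply connected $\RR^n$ finishes it; you then need the structure-by-structure version of Kim's correspondence (not merely the ``admits'' formulation of Theorem~\ref{thm_affinest} as stated in the paper) to convert ``simply transitive'' into ``$\id+R_y$ invertible'', which is indeed what \cite{kim86-1} proves. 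The converse direction, however, is not proved but deferred: the ``classical fact'' that a simply transitive affine action of a $1$-connected nilpotent group has unipotent linear part is exactly the hard half of the statement (it is the content of Theorem~\ref{thm:kd} specialized to $H=\RR^n$, proved in \cite{bdd09-1}, or of \cite[Theorem 2]{seg92-1}), and your proposed algebraic alternative correctly identifies the sticking point (upgrading the Engel flag of left ideals to two-sided ideals) without resolving it. So your write-up honestly reduces the theorem to the same external inputs the paper itself cites; as a comparison, your route buys a transparent geometric proof of one implication, while the paper simply outsources both.
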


As there are different (equivalent) notions of completeness, it is unclear how to generalize it to the bigger class of post-Lie algebra structures that we will introduce in the next section. The importance of completeness lies in the relation to simply transitive affine actions. 

\begin{theorem}{\cite[Proposition 1.1.]{kim86-1}}
	\label{thm_affinest}
	 Let $G$ be a simply connected Lie group with its Lie algebra $\ggo$, then the following statements are equivalent:
	\begin{enumerate}
		\item the Lie algebra $\ggo$ admits a complete left-symmetric structure;
		\item the Lie group $G$ acts simply transitively on $\RR^n$ by affine transformations.
	\end{enumerate}
\end{theorem}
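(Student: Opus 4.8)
The plan is to translate everything to the infinitesimal level and recognize the two left-symmetric axioms, together with completeness, as exactly the conditions forcing the orbit map of the candidate action to be a global diffeomorphism. Writing $\Aff(\RR^n) = \RR^n \rtimes \Gl_n(\RR)$, its Lie algebra is $\aff(\RR^n) = \RR^n \rtimes \glg_n(\RR)$ with bracket $[(A_1,b_1),(A_2,b_2)] = ([A_1,A_2],\, A_1 b_2 - A_2 b_1)$. Since $G$ is $1$-connected, smooth actions $\rho\colon G \to \Aff(\RR^n)$ are in bijection with Lie algebra homomorphisms $\varphi = d\rho\colon \ggo \to \aff(\RR^n)$. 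Writing $\varphi(x) = (A(x),t(x))$, the homomorphism property splits into two pieces: the linear part $A\colon \ggo \to \glg_n(\RR)$ is a Lie algebra homomorphism, and the translational part $t\colon \ggo \to \RR^n$ satisfies the cocycle identity $t([x,y]_\ggo) = A(x)t(y) - A(y)t(x)$.

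For the implication $(1)\Rightarrow(2)$, I would start from a complete LS product and set $A(x) = L_x$, $t = \id$, so that $\varphi(x) = (L_x,x)$. Axiom (2) says precisely that $L\colon x \mapsto L_x$ is a Lie algebra homomorphism, i.e. $L_{[x,y]_\ggo} = [L_x,L_y]$, and axiom (1) is exactly the cocycle identity for $t = \id$; hence $\varphi$ is a Lie algebra homomorphism and integrates to an affine action $\rho\colon G \to \Aff(\RR^n)$. The orbit map $\theta(g) = \rho(g)\cdot 0$ satisfies the equivariance $\theta\circ L_g = \rho(g)\circ\theta$, and $d\theta_e = t = \id$ is invertible; differentiating the equivariance shows $d\theta_g$ is invertible for every $g$, so $\theta$ is everywhere a local diffeomorphism. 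What remains is to prove $\theta$ is bijective, and this is exactly the point at which completeness is used.

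For $(2)\Rightarrow(1)$ I would run the dictionary backwards: a simply transitive smooth action of an $n$-dimensional Lie group on $\RR^n$ has bijective orbit map $\theta$, which is then automatically a diffeomorphism, so (taking the basepoint to be the origin) $t = d\theta_e$ is a linear isomorphism. Identifying $\ggo$ with $\RR^n$ through $t$ and defining $x\cdot y = t^{-1}\big(A(x)\,t(y)\big)$, the cocycle identity yields axiom (1) and the fact that $A$ is a Lie algebra homomorphism yields axiom (2), so $\cdot$ is a left-symmetric structure with left multiplication $L_x = t^{-1}A(x)\,t$. Again the completeness of this structure is equivalent to the (here given) global bijectivity of $\theta$.

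The main obstacle, in both directions, is the single equivalence ``$\id + R_y$ invertible for every $y$'' $\iff$ ``$\theta$ is a global diffeomorphism'', since the local-diffeomorphism statement above is automatic and carries no information. I would establish it by reading the left-symmetric structure as a left-invariant, torsion-free, flat affine connection on $G$ (vanishing torsion is axiom (1), flatness is axiom (2)); then $\theta$ is the developing map of this affine structure and $\rho$ its affine holonomy. For a $1$-connected flat affine manifold the developing map is a diffeomorphism exactly when the structure is geodesically complete, and the algebraic condition that $\id + R_y$ be invertible for all $y$ is the reformulation of this completeness, guaranteeing that $\theta$ is proper; since $\RR^n$ is simply connected, a proper local diffeomorphism is a diffeomorphism. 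Conversely, simple transitivity forces $\theta$ to be proper and hence $\id + R_y$ to be invertible. The delicate part is passing, uniformly in $y$ and globally over $G$, between the geometric completeness of $\theta$ and the purely algebraic invertibility of $\id + R_y$; here the solvability of $G$ is convenient, allowing an induction along a subnormal series that reduces the global claim to the one-parameter flows of the generating affine vector fields, where invertibility of $\id + R_y$ can be read off directly.
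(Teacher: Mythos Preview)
The paper does not prove this statement; it is cited as \cite[Proposition~1.1]{kim86-1} in the preliminaries with no argument given, so there is nothing in the paper to compare your proof against. Your outline follows the classical route, and the dictionary you set up between $t$-bijective Lie algebra maps $\ggo\to\aff(\RR^n)$ and left-symmetric products is exactly what the paper later records (in the PLAS generality) as Proposition~\ref{subalgebras}; the connection interpretation in your last paragraph is the content of Remark~\ref{flat torsion free}.

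Two remarks on the step you yourself flag as delicate. First, invoking solvability of $G$ at that point is circular: the theorem does not assume it, and although solvability follows from either condition, you cannot use it before the equivalence is established. Second, the direct link between completeness and bijectivity of $\theta$ --- and the one Kim actually uses --- comes from computing $d\theta$ in the \emph{right}-invariant trivialisation of $TG$ rather than the left one you used for the local-diffeomorphism claim: one finds $d\theta_g(x) = L_x\big(\theta(g)\big) + x = (\id + R_{\theta(g)})(x)$, so the infinitesimal action spans $T_p\RR^n$ precisely when $\id+R_p$ is invertible. Completeness then says every $G$-orbit in $\RR^n$ is open, connectedness of $\RR^n$ forces a single orbit, and simple transitivity follows from $\dim G = n$ together with simple connectedness of $G$ and $\RR^n$. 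Conversely, transitivity forces $\id+R_p$ to be invertible at every $p$. This bypasses both the properness argument and any induction along a subnormal series.
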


\begin{remark}\label{flat torsion free}
	Although this is not important for the remainder of this paper, the definition of left-symmetric structures is closely related to complete affine manifolds, which have a rich history on their own. Note that a left-symmetric structure on $\ggo$ defines a left-invariant connection $\nabla$ on the corresponding $1$-connected Lie group $G$ by $\nabla_xy=L_x(y)$ for all $x, y \in \ggo$. The first condition of a left-symmetric structure is equivalent to $\nabla$ being torsion-free, while the second condition is equivalent to the curvature being zero. Moreover, this connection is geodesically complete, that is, every geodesic can defined over the real line, if and only if the left-symmetric structure is complete. 
\end{remark}

In \cite{kim86-1}, Kim used Theorem \ref{thm_affinest} to determine the existence of simply transitive actions on $\RR^n$ via affine transformations for a given Lie group. The author focused on nilpotent Lie algebras admitting a left-symmetric structure and studied their properties, mostly in relation to extensions and the existence of a center. Finally, a classification of 4-dimensional LS whose left multiplication $L_x$ is nilpotent is given up to affine equivalence.

\subsection{NIL-affine actions with $G$ nilpotent}

The case of NIL-affine actions $\rho: G \to \Aff(H)$ when both $G$ and $H$ are nilpotent was studied in \cite{bdd09-1}. They translated the problem to the Lie algebra level by considering the induced map $\varphi=d\rho$ given by
\begin{align}\label{t D parts}
	\varphi : \ggo &\to \aff(\hgo) =  \hgo \rtimes \Der(\hgo)  \\
	x &\mapsto \left(t(x),D(x) \right) \nonumber
\end{align}
with $t: \ggo \to \hgo$ a linear map and $D: \ggo \to \Der(\hgo)$ a morphism of Lie algebras.  Recall that $\aff(\hgo) = \hgo \rtimes \Der(\hgo)$ is equal to the Lie algebra of $\Aff(H)$, with its Lie bracket given by 
\begin{equation}\label{eq: lie bracket aff}
	[(x,D), (x',D')]_{\aff(\hgo)} = ([x, x']_\hgo + D(x') - D'(x), D \circ D^\prime - D^\prime \circ D).
\end{equation}

The main result is stated as follows.
\begin{theorem}{\cite[Theorem 3.1.]{bdd09-1}}
	\label{thm:kd}
	Let $G$ and $H$ be $1$-connected nilpotent Lie groups and $\rho: G \to \Aff(H)$ a representation with corresponding maps $t: \ggo \to \hgo$ and $D: \ggo \to \Der(\hgo)$. The group $G$ acts simply transitive on $H$ if and only if the map $t: \ggo \to \hgo$ is a bijection and $D(x)$ is nilpotent for every $x \in \ggo$. 
\end{theorem}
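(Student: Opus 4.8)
The plan is to reduce everything to the orbit map $\operatorname{ev}\colon G\to H$, $g\mapsto\rho(g)\cdot e_H$, which is a bijection precisely when the action is simply transitive. First I would compute its differential at the identity: since $\rho(\exp_G(sx))=\exp_{\Aff(H)}(s\vp(x))$ and the evaluation map $\Aff(H)=H\rtimes\Aut(H)\to H$, $(m,\alpha)\mapsto m$, is the projection onto the first factor, one gets $\dif(\operatorname{ev})_{e}=t$. From the equivariance $\operatorname{ev}\circ L_g=\rho(g)\circ\operatorname{ev}$ (with $L_g$ left translation on $G$) it follows that $\dif\operatorname{ev}$ has constant rank equal to $\rank(t)$; hence $\operatorname{ev}$ surjective forces $t$ surjective, so $t$ bijective, and conversely if $t$ is bijective then $\operatorname{ev}$ is a local diffeomorphism everywhere. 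This settles the role of $t$ in both implications and reduces the theorem to the statement about $D$.

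For ``$\Leftarrow$'' assume $t$ bijective and each $D(x)$ nilpotent. Then $\mathfrak{d}:=D(\ggo)$ is a Lie subalgebra of $\Der(\hgo)$ all of whose elements are nilpotent operators, so by Engel's theorem it can be put in strict upper triangular form; in particular $U:=\exp(\mathfrak{d})$ is a closed $1$-connected unipotent subgroup of $\Aut(H)$ with Lie algebra $\mathfrak{d}$, and the linear part of $\rho$ takes values in $U$. Hence $N:=H\rtimes U\subseteq\Aff(H)$ is again a $1$-connected nilpotent Lie group (an extension of a nilpotent group by a nilpotent group acting nilpotently), $\rho$ maps $G$ into $N$, and $\operatorname{Stab}_N(e_H)=U$, so $H\cong N/U$ as $N$-spaces and $\operatorname{ev}$ is the restriction to $\rho(G)$ of the quotient $N\to N/U$. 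I would then pass to exponential coordinates on $G$ and $N$: there $\rho$ is the linear map $\vp$ (a homomorphism between $1$-connected nilpotent Lie groups is linear in exponential coordinates), $U$ is the linear subspace $\mathfrak{d}$, and $\operatorname{ev}$ is a polynomial self-map of $\RR^n$. Since $\vp^{-1}(\mathfrak{d})=\ker t=0$ one has $\rho^{-1}(U)=\{e\}$, so $\operatorname{ev}$ is injective; an injective polynomial self-map of $\RR^n$ is bijective, so $\rho(G)$ acts simply transitively on $H$.

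For ``$\Rightarrow$'' assume the action is simply transitive, so $t$ is bijective and $\operatorname{ev}$ is a diffeomorphism by the first paragraph, and it remains to show each $D(x)$ is nilpotent. Here I would pass to the Zariski closure $A$ of $\rho(G)$ inside the real algebraic group $\Aff(H)=H\rtimes\Aut(\hgo)$. As $\rho(G)\cong G$ is nilpotent, $A$ is a connected nilpotent algebraic group, hence $A=A_s\times A_u$ with $A_u$ unipotent and $A_s$ a central torus. The torus $A_s$ commutes with $\rho(G)$, and acting on the $1$-connected nilpotent group $H$ through $\Aff(H)$ it has a fixed point $h_0\in H$: by induction on the nilpotency class of $H$, using that the lower central series of $H$ is $\Aut(H)$-invariant, the inductive step reduces to the elementary fact that a torus acting affinely on $\RR^k$ has a fixed point (the relevant cocycle is a coboundary, a torus being linearly reductive). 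Then $A_s$ fixes $\rho(g)\cdot h_0$ for every $g\in G$, and these points exhaust $H$ by simple transitivity; since $\Aff(H)$ acts faithfully on $H$, this forces $A_s=\{e\}$. Thus $A$ is unipotent, so its image in $\Aut(\hgo)$ is unipotent, and consequently $\operatorname{Lie}(A)$, which contains $\vp(\ggo)$, projects into $\Der(\hgo)$ as a space of nilpotent operators; in particular each $D(x)$ is nilpotent.

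The step I expect to be the main obstacle is the ``$\Rightarrow$'' direction, namely the nilpotency of the $D(x)$: this is where simple transitivity is genuinely used, and making it precise seems to require the algebraic-group structure of $\Aff(H)$ together with the central-torus fixed-point argument, since a purely dynamical analysis of a single one-parameter orbit does not close (exponentially growing eigendirections of $D(x)$ are perfectly compatible with properness of one orbit curve). A secondary point that needs care, though standard, is the claim used in ``$\Leftarrow$'' that the injective polynomial orbit map is surjective, equivalently that the vector-space decomposition $\mathfrak{n}=\vp(\ggo)\oplus\mathfrak{d}$ into two subalgebras integrates to a product diffeomorphism $\rho(G)\times U\to N$ of simply connected nilpotent Lie groups.
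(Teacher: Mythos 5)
The statement you were asked to prove is quoted in the paper from \cite{bdd09-1} (their Theorem 3.1); the present text contains no proof of it, so there is no internal argument to compare yours against, and I can only assess the proposal on its own merits. It is essentially correct, and it runs along the same lines as the machinery the paper does develop for the more general solvable case (Theorem \ref{thm_closure} and Remark \ref{remark_ss}): reduce simple transitivity to bijectivity of the orbit map $g\mapsto\rho(g)\cdot e_H$, whose differential at $e$ is $t$, and control the linear part through unipotency in the algebraic group $\Aff(H)$. Your forward direction is the expected one: the Zariski closure $A$ of $\rho(G)$ is connected nilpotent, hence $A=A_s\times A_u$ with $A_s$ a central torus; a torus in $\Aff(H)$ fixes a point of $H$ (conjugacy of maximal tori in the connected solvable group generated by $H$ and $A_s$ gives this immediately, and is perhaps slicker than the inductive cocycle argument, though both are valid); centrality of $A_s$ together with transitivity and faithfulness then forces $A_s=\{e\}$, so $A$ is unipotent and each $D(x)$ is nilpotent --- this is exactly the specialization to nilpotent $G$ of the argument behind Remark \ref{remark_ss}, where the maximal torus is the stabilizer of $e_H$ and must vanish. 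In the converse direction the two points you flag are indeed where the real content sits, and both are standard: $N=H\rtimes\exp(\mathfrak{d})$ is unipotent because a connected solvable algebraic group generated by a normal unipotent subgroup and a unipotent subgroup has trivial torus quotient, and surjectivity of the injective polynomial orbit map follows either from the Bia\l{}ynicki-Birula--Rosenlicht theorem on injective polynomial self-maps of $\RR^n$, or, closer to the spirit of the subject, from the Kostant--Rosenlicht theorem that orbits of unipotent algebraic groups acting on affine varieties are closed, so that an orbit which is open (by bijectivity of $t$) and closed in the connected space $H$ is all of $H$. One small wrinkle worth smoothing: in your first paragraph, ``$\operatorname{ev}$ surjective forces $t$ surjective, so $t$ bijective'' also uses injectivity of $\operatorname{ev}$ (a constant-rank map that is injective must be an immersion), which of course holds under simple transitivity; with that said, I see no gap.
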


The authors also linked the existence of such actions to the existence of PLAS, which generalize the notion of left-symmetric structure (also called pre-Lie algebras structures). Before stating this link result, we recall de definition of PLAS.  
\begin{definition}
	\label{def_PLAS}
	Let $\ggo=(V,[\cdot,\cdot]_\ggo)$ and $\hgo=(V, [\cdot,\cdot]_\hgo)$  be two Lie algebras on the same underlying vector space $V$. A \emph{post-Lie algebra structure} (abbreviated as PLAS) on the pair $(\ggo,\hgo)$ is a bilinear product $$\cdot: V \times V \to V$$ satisfying the following identities:  
	\begin{enumerate}
		\item $x \cdot y - y \cdot x= [x, y]_\ggo - [x, y]_\hgo$
		\item $[x, y]_\ggo \cdot z =  x \cdot (y \cdot z) - y \cdot (x \cdot z)$,
		\item $x \cdot [y, z]_\hgo = [x \cdot y, z]_\hgo + [y, x \cdot z]_\hgo$
	\end{enumerate}
	for all $x, y, z \in V$.
\end{definition} \noindent A different way of looking at PLAS as subalgebras of $\aff(\hgo)$ will be presented below. Although PLAS can be defined over any field $k$, we focus on the reals $\RR$ in this paper because of the link with NIL-affine actions. In the special case that $[ x,y ]_\hgo$ is zero for all $x,y\in V$, a post-Lie algebra is nothing else than a left-symmetric algebra on the Lie algebra $\ggo$.

There also exists the notion of a post-Lie algebra, which is a Lie algebra $\hgo$ equipped with a product $\cdot$ satisfying certain conditions, such that condition (1) defines a Lie bracket, see e.g.~\cite{vall7}. To avoid confusion, we do not pursue this point of view, although it is equivalent to working with PLAS as we will do below.

Usually we denote by $L_x: V \to V$ and $R_y: V \to V$ with $x, y \in V$ the left and right multiplication operators of the algebra $(V,\cdot)$, respectively. It is easy to see that condition (3) corresponds to the fact that $L_x$ is a derivation of the Lie algebra $\hgo$ for every $x \in V$, and that condition (2) means that the map $L: V \to \Der(\hgo)$ is a Lie algebra morphism. 

\begin{remark}
	\label{rmk_mult} A PLAS is fully characterized by either the left or the right multiplication map, so often we will just speak about the PLAS given by $L: V \to \Der(\hgo)$. In fact, once we know the Lie algebra $\hgo$ and the left multiplication, we can compute the second Lie bracket via condition (1), a point of view we will exploit in the examples, see Section \ref{sec_ex}. 
\end{remark}

A lot is known about the existence of PLAS on an arbitrary pair of Lie algebras, we refer to \cite{bd16-1} for an overview of these results. In this paper we will focus on the case when $\ggo$ is solvable and $\hgo$ is nilpotent, due to the motivation from NIL-affine actions.

Theorem \ref{thm:kd} was rephrased in therms of PLAS in \cite[Theorem 2.15.]{bdv12-1}.

\begin{theorem}
	Let $G$ and $H$ be $1$-connected nilpotent Lie groups with associated Lie
	algebras $\ggo$ and $\hgo$. There exists a simply transitive NIL-affine action of $G$ on $H$ if and only if there is a Lie algebra $\tilde{\ggo}$, isomorphic to $\ggo$, with the same underlying vector space as $\hgo$ such that the pair of Lie algebras $(\tilde{\ggo},\hgo)$ admits a complete post-Lie algebra structure.
\end{theorem}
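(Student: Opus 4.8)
The plan is to route both implications through Theorem~\ref{thm:kd} by setting up a precise dictionary between its hypotheses and PLAS. Fix the vector space $V$ underlying $\hgo$. The key observation is that, on $V$, giving a Lie algebra morphism $\varphi=(t,D):\tilde\ggo\to\aff(\hgo)=\hgo\rtimes\Der(\hgo)$ with $t=\id_V$ is the same datum as giving a PLAS on the pair $(\tilde\ggo,\hgo)$ via $L_x:=D(x)$ (using Remark~\ref{rmk_mult}): axiom~(3) of Definition~\ref{def_PLAS} says exactly $D(x)\in\Der(\hgo)$, axiom~(2) says $L:\tilde\ggo\to\Der(\hgo)$ is a Lie algebra morphism, which is the second component of the bracket~\eqref{eq: lie bracket aff}, and axiom~(1) is the first component of the identity $\varphi([x,y]_{\tilde\ggo})=[\varphi(x),\varphi(y)]_{\aff(\hgo)}$ once one feeds in~\eqref{eq: lie bracket aff}. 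Moreover, ``$D(x)$ nilpotent for all $x$'' translates to ``$L_x$ nilpotent for all $x$'', which in this nilpotent setting is precisely completeness of the PLAS (cf.\ Theorem~\ref{thm_completenil}). Once this dictionary is in place, the theorem reduces to arranging $t=\id_V$ on each side.

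For the direction ($\Rightarrow$) I would start from a simply transitive $\rho:G\to\Aff(H)$, pass to $\varphi=d\rho:\ggo\to\aff(\hgo)$, $x\mapsto(t(x),D(x))$ as in~\eqref{t D parts}, and invoke Theorem~\ref{thm:kd} to get that $t:\ggo\to\hgo$ is a linear isomorphism and each $D(x)$ is nilpotent. I then transport the bracket of $\ggo$ to $V$ along $t$, i.e.\ I let $\tilde\ggo$ be $V$ equipped with $[t(x),t(y)]_{\tilde\ggo}:=t([x,y]_\ggo)$, so that $t:\ggo\to\tilde\ggo$ is a Lie algebra isomorphism, and I set $L_{t(x)}:=D(x)$. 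By the dictionary above, the fact that $\varphi=d\rho$ is a Lie algebra morphism yields exactly axioms~(1)--(3) for $(\tilde\ggo,\hgo)$ with this product; and nilpotency of all $D(x)$ forces nilpotency of all $L_a$, $a\in V$, since $t$ is onto, hence completeness.

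For the direction ($\Leftarrow$) I would take a Lie algebra $\tilde\ggo\cong\ggo$ on $V$ together with a complete PLAS on $(\tilde\ggo,\hgo)$, and define $\varphi:\tilde\ggo\to\aff(\hgo)$ by $\varphi(x)=(x,L_x)$, which lands in $\hgo\rtimes\Der(\hgo)$ by axiom~(3). Reading~\eqref{eq: lie bracket aff} backwards, axiom~(1) identifies the first component of $[\varphi(x),\varphi(y)]_{\aff(\hgo)}$ with $[x,y]_{\tilde\ggo}$ and axiom~(2) identifies its second component with $L_{[x,y]_{\tilde\ggo}}$, so $\varphi$ is a Lie algebra morphism. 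Composing with an isomorphism $\ggo\to\tilde\ggo$ and integrating (possible since $G$ is $1$-connected and $\aff(\hgo)$ is the Lie algebra of $\Aff(H)$, using $\Aut(H)\cong\Aut(\hgo)$ for $H$ $1$-connected nilpotent) produces a representation $\rho:G\to\Aff(H)$ whose associated maps are a linear bijection $t$ and a map $D$ with every $D(x)$ nilpotent; Theorem~\ref{thm:kd} then gives that $G$ acts simply transitively on $H$.

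There is no deep obstacle: the whole argument is this dictionary together with Theorem~\ref{thm:kd}. The two points that need care are, first, the transport of structure along $t$ in ($\Rightarrow$), which is exactly what forces everything onto a single vector space and explains why the statement produces an isomorphic copy $\tilde\ggo$ rather than $\ggo$ itself; and second, the integration step in ($\Leftarrow$), where one genuinely uses $1$-connectedness of $G$ to lift the Lie algebra morphism $\varphi$ to a group morphism. It is also worth recording explicitly that the matching of the two nilpotency conditions — $D(x)$ nilpotent on $\ggo$ versus $L_a$ nilpotent on $V$ — relies on bijectivity of $t$, so the same notion of completeness really does appear on both sides of the equivalence.
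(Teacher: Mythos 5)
Your proof is correct, and it is essentially the argument the paper intends: the paper quotes this statement from \cite{bdv12-1} without reproving it, but your ``dictionary'' is exactly Proposition \ref{subalgebras} (equivalently Remark \ref{PLA}) combined with Theorem \ref{thm:kd}, with completeness matching Definition \ref{complete PLAS nilpotent case} via surjectivity of $t$. The same transport-along-$t$ construction is what the paper itself uses when generalizing the forward direction to solvable $G$ in Theorem \ref{from ST to PLAS} and Corollary \ref{cor:PLAS}.
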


Again, the crucial part in this theorem is the correct notion of completeness, which in this case involves the left multiplication, see \cite{bdv12-1}.
\begin{definition}\label{complete PLAS nilpotent case}
A post-Lie algebra structure on a pair $(\ggo,\hgo)$ with $\ggo$ and $\hgo$ both nilpotent is called {\it complete} if all left multiplications are nilpotent. 
\end{definition}

Note that when $\hgo$ is abelian completeness is equivalent to completeness of the LS structure in the sense of Definition \ref{complete_LS}, according to Theorem \ref{thm_completenil}. The question how completeness should be defined in the case of $\ggo$ solvable and $\hgo$ nilpotent is one of the main motivations of this paper. 

In \cite{bdd09-1} this theorem is used to show that for any $n \leq 5$ and any solvable and nilpotent $1$-connected Lie groups $G$ and $H$ of dimension $n$, there exists a simply transitive NIL-affine action $\rho: G \to \Aff(H)$. Moreover, the authors initiate the study of the special case when $G$ is abelian, leading to so-called LR-structures.

\subsection{General NIL-affine case}

Recently, the general case when $G$ is solvable and $H$ nilpotent was treated in \cite{DerOri21}. The main result shows that $G$ admits a simply transitive NIL-affine action on $H$ if and only if the semi-simple splitting $\ggo^\prime$ embeds in $\aff(\hgo)$. 
We do not recall the definition of the semi-simple splitting associated to a solvable Lie algebra, but it is a split solvable Lie algebra $\ggo^\prime = \ngo \rtimes \sg$ containing $\ggo$, where $\ngo$ is the nilradical of $\ggo^\prime$, $\sg$ acts by semi-simple derivations on $\ngo$ and satisfying some additional minimality conditions. We call the nilradical $\ngo$ the nilshadow of the solvable Lie algebra $\ggo$. 

\begin{theorem}\cite[Theorem 3.6]{DerOri21}
	\label{our2}
	A $1$-connected solvable Lie group $G$ acts simply transitive on a nilpotent Lie group $H$ via affine transformations if and only if there exists an injective morphism $\varphi=(t,D) : \ggo' \to \aff(\hgo)$ of the semi-simple splitting $\ggo^\prime = \ngo \rtimes \sg$ such that $\varphi(\ngo)$ consists of nilpotent elements, $\varphi(\sg)$ of semi-simple elements and the map $t|_\ngo: \ngo \to \hgo$ is a bijection.
\end{theorem}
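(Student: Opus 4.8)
The plan is to route everything through the \emph{algebraic hull} of $\rho(G)$ inside $\Aff(H)$, viewed as a real algebraic group acting algebraically on $H\cong\RR^n$ via $\exp$ and $\log$, with Theorem \ref{thm:kd} serving as the workhorse for the unipotent part. The guiding picture, which I would import from the theory recalled in \cite{DerOri21}, is that this algebraic hull realizes the semi-simple splitting: its Lie algebra is a copy of $\ggo'$, its unipotent radical corresponds to $\ngo$ and still acts simply transitively, and a maximal torus corresponds to $\sg$. For the forward direction, assume $\rho:G\to\Aff(H)$ is simply transitive; freeness makes $\rho$, hence $\varphi=d\rho$, injective. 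Let $\mathbf{A}$ be the identity component of the Zariski closure of $\rho(G)$ in $\Aff(H)$. I would invoke the structure theorem for algebraic hulls of simply transitive solvable actions: the Lie algebra of $\mathbf{A}$ is isomorphic to $\ggo'=\ngo\rtimes\sg$, with $\ngo$ the Lie algebra of the unipotent radical $\mathbf{U}$ (elements nilpotent in $\aff(\hgo)$) and $\sg$ the Lie algebra of a maximal torus (elements semisimple), and with $\mathbf{U}$ still acting simply transitively on $H$. Under the resulting embedding $\varphi:\ggo'\hookrightarrow\aff(\hgo)$, the orbit map $\mathbf{U}\to H$, $u\mapsto u\cdot e_H$, is a diffeomorphism whose differential at the identity is precisely $t|_\ngo$ (the flow of $(v,D)\in\aff(\hgo)$ through $e_H$ has initial velocity $v$); hence $t|_\ngo$ is bijective and all stated properties hold.

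For the converse, assume $\varphi=(t,D):\ggo'\to\aff(\hgo)$ is as in the statement. Since $\ggo'=\ngo\rtimes\sg$ with $\varphi(\ngo)$ nilpotent and $\varphi(\sg)$ semisimple, the subalgebra $\varphi(\ggo')$ is algebraic; let $\mathbf{A}=\mathbf{U}\rtimes\mathbf{T}\leq\Aff(H)$ be the connected algebraic subgroup with this Lie algebra, $\mathbf{U}$ its unipotent radical. Integrating $\varphi$ yields $\rho':G'\to\Aff(H)$ (the splitting $G'$ is $1$-connected), and one checks $\rho'(G')=\mathbf{A}$, $\rho'(N)=\mathbf{U}$, $\rho'(S)=\mathbf{T}$, where $N,S,G\leq G'$ are the closed connected subgroups with Lie algebras $\ngo,\sg,\ggo$. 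The hypothesis that $\varphi(\ngo)$ consists of nilpotent elements means exactly that each $D(x)$, $x\in\ngo$, is a nilpotent derivation, so Theorem \ref{thm:kd}, applied to the $1$-connected nilpotent groups $N$ and $H$ with $t|_\ngo$ bijective, shows that $N$ already acts simply transitively on $H$.

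It then remains to transfer simple transitivity from $N$ to $G$, and the key reduction is to arrange $t|_\sg=0$, so that $S$ fixes $e_H$. The subalgebra $\tg:=\ker(t:\ggo'\to\hgo)$ is the Lie algebra of $\operatorname{Stab}_{G'}(e_H)$; since $t|_\ngo$ is injective it meets $\ngo$ trivially, so its connected subgroup maps isomorphically onto the torus $\mathbf{A}/\mathbf{U}$ and is therefore a maximal torus of $\mathbf{A}$, whence $\tg$ is a maximal torus of $\ggo'$, just like $\sg$. Since all maximal tori of the split solvable Lie algebra $\ggo'=\ngo\rtimes\sg$ are conjugate under $\exp(\ad_\ngo)$, after replacing $\varphi$ by $\Ad(u)\circ\varphi$ for a suitable $u\in\mathbf{U}=\rho'(N)$ — which merely conjugates the action by $u$ and hence does not affect simple transitivity of $G$ — we may assume $\varphi(\sg)=\tg$, i.e.\ $t|_\sg=0$; this replacement preserves bijectivity of $t|_\ngo$, because conjugation by $\rho'(N)$ fixes the normal subgroup $\rho'(N)$ and hence the subset $\varphi(\ngo)\subseteq\aff(\hgo)$, and bijectivity of $t|_\ngo$ only says that $(v,D)\mapsto v$ maps $\varphi(\ngo)$ bijectively onto $\hgo$. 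Now $\rho'(S)\subseteq\operatorname{Stab}_{\Aff(H)}(e_H)=\Aut(H)$, so $\rho'(G')=\rho'(N)\,\rho'(S)$ acts transitively, $\operatorname{Stab}_{G'}(e_H)$ contains $S$ and has Lie algebra $\ker t=\sg$, and it is connected since in the homotopy sequence of the fibration $\operatorname{Stab}_{G'}(e_H)\to G'\to H\cong\RR^n$ the simply connected base kills $\pi_0$ of the fibre; hence $\operatorname{Stab}_{G'}(e_H)=S$ and the $G'$-action descends to a diffeomorphism $G'/S\cong H$. The orbit map $o:G\to H$, $g\mapsto\rho'_g(e_H)$, is then the composite $G\hookrightarrow G'\to G'/S\cong H$, which is a bijection by the semi-simple splitting identities $G'=G\cdot S$, $G\cap S=\{e\}$, and a local diffeomorphism because $do_e$ is the isomorphism $\ggo\hookrightarrow\ggo'\to\ggo'/\sg$ (using $\ggo\oplus\sg=\ggo'$) and hence everywhere by equivariance; a bijective local diffeomorphism is a diffeomorphism, so $G$ acts simply transitively.

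The technical bookkeeping above is routine once one has the two structural inputs, which is where I expect the real work to be. For the forward direction it is the statement that the algebraic hull of a simply transitive solvable action has the semi-simple splitting as its Lie algebra and that its unipotent radical again acts simply transitively; for the converse it is the reduction to $t|_\sg=0$, resting on the conjugacy of maximal tori under $\exp(\ad_\ngo)$ together with the verification that this conjugation preserves bijectivity of $t|_\ngo$. If these are not available off the shelf in the NIL-affine setting, one must build them directly: the first from the construction of the semi-simple splitting via the algebraic hull, the second from an explicit analysis of how $\exp(\ad_{\varphi(n)})$, $n\in\ngo$, acts on $\varphi(\ggo')\subseteq\aff(\hgo)$.
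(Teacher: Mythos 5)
First, a point of orientation: Theorem \ref{our2} is quoted verbatim from \cite[Theorem 3.6]{DerOri21}; the present paper contains no proof of it, so the only thing to compare against is the argument in that reference, whose key ingredients are recalled here as Theorem \ref{thm_closure} and Remark \ref{remark_ss}. Measured against that, your forward direction is essentially circular: the ``structure theorem for algebraic hulls of simply transitive solvable actions'' that you invoke --- that the Zariski closure of $\rho(G)$ has Lie algebra isomorphic to the semi-simple splitting $\ggo'$, with unipotent radical the nilshadow still acting simply transitively --- \emph{is} the hard half of the statement to be proved. In particular you never address why the maximal torus of the hull has dimension exactly $\dim\sg$ (taking Zariski closures can enlarge tori, e.g.\ a one-parameter group with eigenvalue ratios $1:\sqrt{2}$ closes up to a two-dimensional torus), nor why the minimality characterizing the semi-simple splitting is met. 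Your closing paragraph concedes this, but a concession is not a proof.

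The converse also has a concrete gap. You assert that $\varphi(\ggo')$ is algebraic because $\varphi(\ngo)$ consists of nilpotent and $\varphi(\sg)$ of semi-simple elements; this is false in general, for exactly the reason above --- the smallest algebraic subalgebra containing $\varphi(\sg)$ may be a strictly larger torus --- so the group $\mathbf{A}$ with Lie algebra $\varphi(\ggo')$ need not exist as an algebraic subgroup, and your subsequent identification of $\operatorname{Stab}_{G'}(e_H)$ with a maximal torus collapses. Independently, the argument ``$\ker(t)$ meets $\ngo$ trivially, hence maps isomorphically onto $\mathbf{A}/\mathbf{U}$, hence is a maximal torus'' is invalid as stated: a subalgebra complementary to the unipotent radical need not consist of semi-simple elements (take $s+z$ with $z\neq 0$ central in $\ngo$ and commuting with $s$; then $\varphi(s)+\varphi(z)$ is its own Jordan decomposition and is not semi-simple). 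Ruling this out requires knowing the stabilizer is Zariski closed inside an algebraic $\mathbf{A}$, which circles back to the unproved algebraicity. A route that avoids the torus-conjugation altogether, and is closer to \cite{DerOri21}, is to restrict $\varphi$ to $\ggo\subset\ggo'$ and verify the criterion of Theorem \ref{thm_closure} directly, i.e.\ show $\mathfrak{u}_{\varphi(\ggo)}=\varphi(\ngo)$ (using $\ggo'=\ggo+\ngo$ and that the nilpotent elements of $\overline{\varphi(\ggo')}=\varphi(\ngo)+\overline{\varphi(\sg)}$ are exactly $\varphi(\ngo)$), whence $p(\mathfrak{u}_{\varphi(\ggo)})=t(\ngo)=\hgo$. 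As written, neither direction of your argument is complete.
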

\noindent Here, nilpotent and semi-simple is defined in terms of the linear algebraic group $\aff(\hgo)$, as we will recall in the next section. Although in principle Theorem \ref{our2} is enough to determine all possibilities for simply transitive actions, it remains open how to link this result to other concepts in the literature such as PLAS. 

\begin{remark}
In \cite{DerOri21} the semi-simple splitting was computed for all solvable Lie algebras up to dimension $4$, such that an embedding $\varphi:\ggo^\prime = \ngo \rtimes \sg\to\aff(\hgo)$ is exhibited for any pair $(\ggo,\hgo)$ where $G$ acts simply transitive by NIL-affine action on $H$. However, the corresponding map $\ggo \to \aff(\hgo)$ is not explicitly written in this paper. Given one of these embeddings $\varphi:\ggo^\prime = \ngo \rtimes \sg\to\aff(\hgo)$, the simply transitive NIL-affine action from $G$ on $H$ can be found by composing $\varphi$ with the natural embedding $\it i:\ggo\to\ggo^\prime=\ggo\rtimes\sg$. We illustrate this in the next example.
\end{remark}

\begin{example}\label{r'_{3,0} Nil-affine action}
	
	Consider the Lie algebra $\ggo=\mathfrak r'_{3,0}$ as in \cite[Example 5.1.]{DerOri21}, which is isomorphic to the Lie algebra $\mathfrak e(2)$ of the isometry group of Euclidean $2$-space. The Lie brakets of $\ggo$ are given by 
	$$[e_1,e_2]=e_3, \quad [e_1,e_3]=-e_2.$$ 
	The semi-simple splitting of $\ggo$ is isomorphic to $\ggo'= \RR^3\rtimes \RR  = \langle e_2^\prime,e_3^\prime,e_4^\prime \rangle \rtimes \langle e_1^\prime \rangle$ with Lie brackets $$[e_1^\prime,e_2^\prime]=e_3^\prime, \quad [e_1^\prime,e_3^\prime]=-e_2^\prime \quad \text{and} \quad e_4^\prime\in\zg(\ggo').$$ 

	The following map is an embedding from $\ggo'$ to $\affg(\hg_3)=\hg_3\rtimes\Der(\hg_3)$, 
	$$\varphi(x_1,x_2,x_3,x_4)=\left( (x_2,x_3,x_4),\begin{pmatrix}
		0&-x_1&0\\
		x_1&0&0\\
		\frac{x_3}{2}&-\frac{x_2}{2}&0
	\end{pmatrix}\right),$$
	where we are considering the canonical basis $\{f_1,f_2,f_3\}$ of $\hgo_3$ such that $[f_1,f_2]=f_3$.

	Now, $\ggo$ can be seen as an ideal of $\ggo'$ by identifying $e_1^\prime$ and $e_4^\prime$, that is $\it i:\ggo\to\ggo^\prime$ is given by $\it i (x_1,x_2,x_3)=(x_1,x_2,x_3,x_1)$, and therefore
	$\it \varphi \circ i:\ggo\to \affg(\hg_3)$, given by
	$$\it \varphi \circ i(x_1,x_2,x_3)=\left( (x_2,x_3,x_1),\begin{pmatrix}
		0&-x_1&0\\
		x_1&0&0\\
		\frac{x_3}{2}&-\frac{x_2}{2}&0
	\end{pmatrix}\right)$$
	induce a simply transitive NIL-affine action from $G$ on $H_3$ where $H_3$ denotes the $3$-dimensional Heisenberg Lie group.
\end{example}

\section{From NIL-affine actions to PLAS}
\label{sec:toplas}

\subsection{NIL-affine action: algebraic point of view}

We start by recalling the necessary background about linear algebraic groups, where more details can be found in e.g.~\cite{bore69-1}. A \textit{real linear algebraic group} $G \subset \Gl(n,\RR)$ is a subgroup of $\Gl(n,\RR)$ given as the zero set of a (finite) number of polynomials. A group morphism $\Phi: G \to G^\prime$ between two real linear algebraic groups $G \subset \Gl(n,\RR)$ and $G^\prime \subset \Gl(n^\prime,\RR)$ is called an \textit{algebraic morphism} if the coordinate functions are given by polynomials. An \emph{algebraic isomorphism} is a bijective algebraic morphism such that the inverse is again an algebraic morphism of real linear algebraic groups. 

We call a linear algebraic group $G$ \emph{unipotent} if all its elements are unipotent, meaning that every $X \in G \subset \Gl(n,\RR)$ only has $1$ as complex eigenvalue. Similarly, we call $G$ \emph{semi-simple} if every element is semi-simple, meaning that it is diagonalizable over $\CC$. Every element $X \in \Gl(n,\RR)$ can be decomposed uniquely as $X = X_u X_s$ where $X_u$ is unipotent, $X_s$ is semi-simple and both elements commute. Moreover, if $X \in G$ is an element in a real linear algebraic group, then both elements $X_u, X_s \in G$. If $\Phi: G \to G^\prime$ is an algebraic morphism of real linear algebraic groups, then for every $X \in G$, we have that $\Phi(X_u) = \Phi(X)_u$ and $\Phi(X_s) = \Phi(X)_s$. In particular, the image of a semi-simple or unipotent algebraic group is again semi-simple or unipotent. 

If $G$ is solvable and connected, then the subset of all unipotent elements $U(G)$ forms a unipotent normal subgroup, which is called the \emph{unipotent radical}. Moreover, every such $G$ is isomorphic to $U(G) \rtimes T$ where $T$ is a maximal torus of $G$, i.e.~a maximal subgroup which is diagonalizable over the complex numbers. 

\begin{example}
	Let $H$ be a simply connected and connected nilpotent Lie group, then $H$ has a unique (i.e.~up to algebraic isomorphism) structure as a unipotent real linear algebraic group. Moreover, the automorphism group $\Aut(H)$ is identified with the automorphisms $\Aut(\hgo)$ of the corresponding Lie algebra $\hgo$, which naturally forms a linear algebraic group. In particular, the semi-direct product $\Aff(H) = H \rtimes \Aut(H)$ also carries the structure of a linear algebraic group.
\end{example}

The Lie algebra $\ggo \subset \glg(n,\RR)$ of a real linear algebraic group $G$ is called an \emph{algebraic Lie algebra}. Similarly, we call a morphism $\varphi: \ggo \to \ggo^\prime$ between algebraic Lie algebras $\ggo \subset \glg(n,\RR)$ and $\ggo^\prime \subset \glg(n^\prime,\RR)$ algebraic if it is induced by an algebraic morphism of linear algebraic groups. Given a subalgebra $\ggo \subset \glg(n,\RR)$, there always exists a smallest algebraic Lie algebra $\overline{\ggo}$ containing $\ggo$, called the algebraic closure of $\ggo$. If $\ggo$ is solvable, then we write $\mathfrak{u}_\ggo$ for the Lie algebra corresponding to the unipotent radical of the algebraic closure $\overline{\ggo}$, which consists exactly of the nilpotent elements in $\overline{\ggo}$. Moreover, we can write $\overline{\ggo} = \mathfrak{u}_\ggo + \mathfrak{s}$ where $\mathfrak{s}$ is the Lie algebra corresponding to a maximal torus, although the subalgebra $\mathfrak{s}$ in general is not unique.

In this short example, we illustrate how the properties of a algebraic Lie algebra $\ggo$ depend on the embedding into $\glg(n,\RR)$.

\begin{example}
	\label{ex_2structure}
	
Consider the Lie algebras
\begin{align*}
		\ggo_1 = \left\{ \begin{pmatrix}x & 0 \\ 0 & 0  \end{pmatrix} \mid x \in \RR \right\}, \, \ggo_2 = \left\{ \begin{pmatrix}0 & x \\ 0 & 0  \end{pmatrix} \mid x \in \RR \right\} \subset \glg(2,\RR)
\end{align*} 
given as subalgebras of $\glg(2,\RR)$. Of course $\ggo_1$ and $\ggo_2$ are isomorphic as Lie algebras, as both are $1$-dimensional. It is easy to check that both are algebraic as well. However, they are not isomorphic as algebraic Lie algebras, as the first corresponds to a semi-simple linear algebraic group and the second to a unipotent. It is easy to see that not every linear map on $\ggo_1$ is algebraic, by considering for example the map $D(x) = \frac{1}{2} x$ for $x \in \RR$.
\end{example}

As $\Aff(H)$ is a real linear algebraic group, we can also take the algebraic closure of subalgebras in the algebraic Lie algebra $\aff(\hgo)$. Using this construction, the paper \cite{DerOri21} also contains a method to check whether a given action is simply transitive, by using the Jordan decomposition of elements in $\aff(\hgo)$, based on the following result.

\begin{theorem}\cite[Theorem 3.4]{DerOri21}
	\label{thm_closure}
Let $\hgo$ be a nilpotent Lie algebra and $\varphi: \ggo \to \aff(\hgo)$ a morphism of a solvable Lie algebra $\ggo$. Consider the ideal of nilpotent elements $\mathfrak{u}_{\varphi(\ggo)}$ of the closure of the image $\varphi(\ggo)$ as described above. The corresponding connected solvable Lie group acts simply transitively on $H$ if and only if $\dim(\ggo) = \dim(\hgo)$ and the restriction $\mathfrak{u}_{\varphi(\ggo)} \to \hgo$ of the natural projection on the first component is a bijection. 
\end{theorem}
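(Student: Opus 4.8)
The plan is to reduce the statement to the nilpotent case, Theorem~\ref{thm:kd}, by replacing $G$ with the unipotent radical of the Zariski closure of its image. Concretely, let $\rho\colon G\to\Aff(H)$ be the $1$-connected representation with $d\rho=\varphi$ (one may assume $\varphi$ injective, since otherwise $\rho$ is not faithful and the action cannot be free), view $\Aff(H)$ with its structure of real linear algebraic group, and let $\overline{G}$ be the identity component of the Zariski closure of $\rho(G)$. Then $\overline{G}$ is a connected solvable linear algebraic group with Lie algebra $\overline{\varphi(\ggo)}=\mathfrak{u}_{\varphi(\ggo)}\rtimes\mathfrak{s}$, hence $\overline{G}=U(\overline{G})\rtimes T$ with $\operatorname{Lie}(U(\overline{G}))=\mathfrak{u}_{\varphi(\ggo)}$, and $\rho(G)$ is Zariski dense in $\overline{G}$. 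The statement then amounts to the equivalence: $\rho(G)$ acts simply transitively on $H$ if and only if $\dim\ggo=\dim\hgo$ and $U(\overline{G})$ acts simply transitively on $H$. Indeed, once this is known, Theorem~\ref{thm:kd} applied to the $1$-connected nilpotent group $U(\overline{G})$ acting on the $1$-connected nilpotent group $H$ says that $U(\overline{G})$ acts simply transitively precisely when the translation part $\mathfrak{u}_{\varphi(\ggo)}\to\hgo$ is a bijection and the derivation part is nilpotent — the latter being automatic because $\mathfrak{u}_{\varphi(\ggo)}$ consists of nilpotent elements of $\aff(\hgo)$, i.e.\ elements $(v,D)$ with $D\in\Der(\hgo)$ nilpotent — and the translation part of the inclusion $\mathfrak{u}_{\varphi(\ggo)}\hookrightarrow\aff(\hgo)=\hgo\rtimes\Der(\hgo)$ is exactly the natural projection onto the first component.

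For the equivalence itself I would argue as follows. If $\rho(G)$ acts simply transitively then $\dim\ggo=\dim\hgo$ is immediate, and $\overline{G}$ acts transitively on $H$; since $T$ is a maximal torus it is conjugate inside $\Aff(H)$ into the linear part $\Aut(H)$ and hence fixes a point $p\in H$, so $H=\overline{G}\cdot p=U(\overline{G})T\cdot p=U(\overline{G})\cdot p$ and $U(\overline{G})$ acts transitively. That $U(\overline{G})$ also acts freely — equivalently that $\dim U(\overline{G})=\dim H$ — is the delicate point; it says that the extra dimensions picked up by the Zariski closure of $\rho(G)$ all lie in the torus direction, and can either be proved directly using that $\rho(G)$ acts simply transitively and that $H$ is unipotent (so that orbits of connected subgroups are well behaved for the Zariski topology, excluding irrational winding), or extracted from Theorem~\ref{our2}: the existence of \emph{some} good embedding of the semi-simple splitting $\ggo'=\ngo\rtimes\sg$ forces, after comparing with the closure $\overline{\varphi(\ggo)}$, that $\mathfrak{u}_{\varphi(\ggo)}$ has dimension $\dim\hgo$ and projects isomorphically onto $\hgo$. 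Conversely, if $\dim\ggo=\dim\hgo$ and $U(\overline{G})$ acts simply transitively, then $U(\overline{G})\cdot e=H$, so the Zariski-dense subgroup $\rho(G)$ has an orbit that is Zariski dense in $H$, hence of full dimension $\dim\hgo=\dim\ggo=\dim\rho(G)$, hence open; one then upgrades it to all of $H$ and checks freeness using $\dim\rho(G)=\dim H$ together with solvability (hence connectedness and triviality of $0$-dimensional) stabilisers.

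The step I expect to be the main obstacle is proving that $U(\overline{G})$ acts \emph{freely} in the forward direction, i.e.\ that $\dim\mathfrak{u}_{\varphi(\ggo)}=\dim\hgo$: a priori the Zariski closure of the solvable group $\rho(G)$ can have strictly larger dimension than $\rho(G)$, and one must show that all of this excess is absorbed by the torus $T$ and not by the unipotent radical — equivalently, that the nilshadow $\ngo$ of $\ggo'$ maps \emph{onto} $\mathfrak{u}_{\varphi(\ggo)}$. This is exactly where simple transitivity of the original action enters: the action of a nontrivial torus on $H$, for which $\mathfrak{u}_{\varphi(\ggo)}=0$ while $\dim\ggo>0$, shows the implication can fail without it, and this is also what distinguishes the criterion from the naive one. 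Once this dimension count is secured, the remainder is the bookkeeping sketched above together with a direct appeal to Theorem~\ref{thm:kd}.
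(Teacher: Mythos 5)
First, a caveat: this paper does not prove the statement at all --- it is quoted verbatim from \cite[Theorem 3.4]{DerOri21} and used as a black box --- so there is no internal proof to compare yours against; I can only judge the proposal on its own terms.

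Your reduction to Theorem \ref{thm:kd} via the unipotent radical $U(\overline{G})$ is the natural strategy, and the bookkeeping around it is sound: the derivation parts of elements of $\mathfrak{u}_{\varphi(\ggo)}$ are automatically nilpotent, and the translation part of the inclusion is exactly the projection $p$. The problem is that the one step you yourself flag as the main obstacle --- that simple transitivity of $\rho(G)$ forces $\dim\mathfrak{u}_{\varphi(\ggo)}=\dim\hgo$, equivalently that the stabilizer $\overline{G}\cap\Aut(H)$ contains no nontrivial unipotent elements --- is left unproved, and neither of your two suggested ways of closing it works as stated. Appealing to Theorem \ref{our2} is circular: in \cite{DerOri21} that is Theorem 3.6, which is derived \emph{from} Theorem 3.4 (and from Theorem 3.5, which by Remark \ref{remark_ss} is where the stabilizer of $e$ is identified with a maximal torus), so it is not available as an input. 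The ``direct'' route is only a slogan; ``orbits are well behaved for the Zariski topology'' does not by itself rule out that some of the excess dimension $\dim\overline{G}-\dim\rho(G)$ lands in the unipotent radical rather than in the torus, and proving that it does not is precisely the content of the missing theorem. There are also smaller soft spots in the converse: to upgrade an open $\rho(G)$-orbit to all of $H$ you should invoke that a connected Lie subgroup is normal in the identity component of its Zariski closure (so that all $\rho(G)$-orbits inside the single $\overline{G}^{0}$-orbit $H$ are open, hence there is only one), and freeness does not follow from ``solvability and $0$-dimensional stabilizers'' ($\ZZ\subset\RR$ is a $0$-dimensional closed subgroup of a solvable group); the correct argument is that a discrete stabilizer would make the orbit map a nontrivial covering of the simply connected space $H$. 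In short, the skeleton is right, but the load-bearing step is exactly the part of \cite[Theorems 3.1--3.5]{DerOri21} that is not rederived here, so the proposal as written does not constitute a proof.
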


Since we know that the dimensions $\dim(\hgo) = \dim(\ggo)$ are equal, the latter condition is equivalent to $p(\mathfrak{u}_{\varphi(\ggo)} ) = \hgo$ with $p: \aff(\hgo) \to \hgo$ the projection on the first component.

\begin{remark}
	\label{remark_ss}
Although not explicitly written in this theorem, we can actually say more about the Lie algebra $\overline{\varphi(\ggo)}$ corresponding to a simply transitive action $\rho: G \to \Aff(H)$. Indeed, in the proof of \cite[Theorem 3.5.]{DerOri21} it is shown that the stabilizer of $e \in H$ forms a maximal torus of $\overline{\rho(G)}$, so with corresponding algebraic Lie algebra $\mathfrak{s} \subset \Der(\hgo)$. In particular, we can write $\mathfrak{u}_{\varphi(\ggo)}+\mathfrak{s} = \overline{\varphi(\ggo)}$ for a subalgebra $\mathfrak{s} \subset \Der(\hgo)$. 
\end{remark}

Although Theorem \ref{thm_closure} gives a method to check whether a general NIL-affine action is simply transitive or not, there is not yet a relation with PLAS as mentioned in the introduction for certain special cases. In the following section we show that every simply transitive NIL-affine action indeed induces as PLAS.

\subsection{PLAS as $t$-bijective subalgebras}

For our techniques, a different characterization of post-Lie algebra structure will be necessary, similar to the approach in \cite{seg92-1}. We write $p: \aff(\hgo) \to V$ and $q: \aff(\hgo) \to \Der(\hgo)$ for the projections on the first and the second component respectively. Although $q$ is a morphism of Lie algebras, the map $p$ is only a linear map of vector spaces, hence we write the image as $V$.

There is a relation between post-Lie algebra structures on $(\ggo,\hgo)$ and subalgebras $\tilde\ggo$ of $\aff(\hgo)$ for which the projection $p: \aff(\hgo) \to\hgo$ onto the first factor induces a Lie algebra isomorphism from $\tilde\ggo$ to $\ggo$. To describe the relation, first note that given a PLAS with corresponding left multiplication $L$, we have that $$\tilde{\ggo}= \left\{(X, L_x) \right\} \subset \aff(\hgo)$$ is a subalgebra such that 
the projection $p$ induces an isomorphism between $\tilde{\ggo}$ and $\ggo$. On the other hand, let $\tilde{\ggo} \subset \aff(\hgo)$ be a subalgebra with inclusion map $\varphi = (t,D): \tilde{\ggo} \to \aff(\hgo)$ as in \eqref{t D parts}. If the map $t: \tilde{\ggo} \to V$ forms an isomorphism between $\tilde{\ggo}$ and $\ggo$, then the inverse isomorphism $t^{-1}: V\to\tilde\ggo\subset\aff(\hgo)$ induces a PLAS given by 
\begin{align}\label{eq: left multiplication}
\nonumber	L: V &\to \Der(\hgo) \\ x & \mapsto D(t^{-1}(x)).
\end{align}  
As proven for example in \cite[Proposition 2.12]{bdv12-1}, this relation is indeed bijective.

\begin{proposition}\label{subalgebras}
	The maps above form a 1-1 correspondence between the post-Lie algebra structures on $(\ggo,\hgo)$ and subalgebras $\tilde\ggo$ of $\aff(\hgo)$ for which the projection $p: \aff(\hgo) \to\hgo$ onto the first factor induces a Lie algebra isomorphism from $\tilde\ggo$ to $\ggo$.
\end{proposition}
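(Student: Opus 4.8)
The plan is to show that the two assignments described just above the statement are well-defined in both directions and mutually inverse; this reduces entirely to a dictionary between the three axioms of Definition \ref{def_PLAS} and the two conditions "\,$\tilde\ggo$ is a subalgebra of $\aff(\hgo)$\," and "\,$p|_{\tilde\ggo}\colon\tilde\ggo\to\ggo$ is a Lie algebra isomorphism\,". No deep input is needed beyond the explicit bracket formula \eqref{eq: lie bracket aff} on $\aff(\hgo)=\hgo\rtimes\Der(\hgo)$.

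For the direction PLAS $\to$ subalgebra, I would start from a PLAS with left multiplication $L\colon V\to\Der(\hgo)$ and set $\tilde\ggo=\{(x,L_x):x\in V\}$. Using \eqref{eq: lie bracket aff},
$$[(x,L_x),(y,L_y)]_{\aff(\hgo)} = \big([x,y]_\hgo + L_x(y) - L_y(x),\; L_x\circ L_y - L_y\circ L_x\big).$$
Axiom (3) is precisely the statement that each $L_x\in\Der(\hgo)$, so this pair genuinely lies in $\aff(\hgo)$. Axiom (1), written as $L_x(y)-L_y(x)=[x,y]_\ggo-[x,y]_\hgo$, rewrites the first component as $[x,y]_\ggo$; axiom (2), i.e.\ that $L$ is a Lie algebra morphism, identifies the second component with $L_{[x,y]_\ggo}$. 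Hence the bracket equals $([x,y]_\ggo,L_{[x,y]_\ggo})\in\tilde\ggo$, so $\tilde\ggo$ is a subalgebra, and the same computation shows that $p|_{\tilde\ggo}$ intertwines the brackets; since $x\mapsto(x,L_x)$ is its linear inverse, $p|_{\tilde\ggo}$ is a Lie algebra isomorphism onto $\ggo$.

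For the converse, I would take a subalgebra $\tilde\ggo\subset\aff(\hgo)$ such that $t:=p|_{\tilde\ggo}\colon\tilde\ggo\to\ggo$ is a Lie algebra isomorphism, put $L_x:=q(t^{-1}(x))=D(t^{-1}(x))$, and define the bilinear product $x\cdot y:=L_x(y)$ (bilinear because $L$ is linear and each $L_x$ is linear). Then axiom (3) is automatic, since $L_x$ is the $\Der(\hgo)$-component of an element of $\aff(\hgo)$; axiom (2) holds because $L=q\circ t^{-1}$ is a composite of Lie algebra morphisms; and axiom (1) follows by applying \eqref{eq: lie bracket aff} to $t^{-1}(x),t^{-1}(y)\in\tilde\ggo$ and comparing first components of $[t^{-1}(x),t^{-1}(y)]_{\aff(\hgo)}$ and $t^{-1}([x,y]_\ggo)$, which gives $[x,y]_\ggo=[x,y]_\hgo+L_x(y)-L_y(x)$.

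Finally I would check the two constructions are inverse to each other. The key observation is that any $\tilde\ggo$ as in the statement is automatically of graph form: $t^{-1}(x)=\big(t(t^{-1}(x)),D(t^{-1}(x))\big)=(x,L_x)$, so $\tilde\ggo=\{(x,L_x):x\in V\}$. Hence applying the first construction to the PLAS obtained from $\tilde\ggo$ returns $\tilde\ggo$ itself, and applying the second construction to the subalgebra $\{(x,L_x)\}$ coming from a PLAS returns the same left multiplication $L$, which determines the PLAS uniquely by Remark \ref{rmk_mult}. The only real care needed is the bookkeeping between the ambient projections $p,q$ and the components $t,D$ of the inclusion; once the graph-form remark is in place, the rest is a direct axiom-by-axiom translation with no genuine obstacle.
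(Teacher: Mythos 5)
Your proposal is correct and is exactly the ``direct check'' that the paper's one-line proof (and the cited \cite[Proposition 2.12]{bdv12-1}) alludes to: the graph-form observation plus the axiom-by-axiom translation via the bracket formula \eqref{eq: lie bracket aff}. No difference in approach, only in the level of detail.
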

\begin{proof}
	A direct check shows that both constructions above are each others inverse.
\end{proof}

This correspondence motivates the following (more general) definition of subalgebras in $\aff(\hgo)$.

\begin{definition}
	Let $\hgo$ be a Lie algebra with underlying vector space $V$. A Lie algebra $\ggo$ together with a map $\varphi=(t,D) : \ggo \to \aff(\hgo)$ as in (\ref{t D parts}) such that $t: \ggo \to \hgo$ is bijective is called {\it $t$-bijective}. In particular, a subalgebra $\tilde{\ggo} \subset \aff(\hgo)$ is called a {\it $t$-bijective subalgebra} if it is $t$-bijective for the inclusion map $i: \tilde{\ggo} \to \aff(\hgo)$. 
\end{definition}

\begin{remark}\label{PLA}
In the previous definition, we only have one Lie bracket on the vector space $V$. However, starting from a $t$-bijective subalgebra $\tilde{\ggo} \subset \aff(\hgo)$, we can construct a second Lie bracket and corresponding PLAS in the following way. Define the Lie bracket $[\cdot,\cdot]_{\ggo}$ on $V$ by $$[x,y]_{\ggo} = t \left([t^{-1}(x),t^{-1}(y)]_{\tilde{\ggo}} \right).$$ This new Lie algebra $\ggo$ is clearly isomorphic to $\tilde\ggo$ via the map $t$ and by Proposition \ref{subalgebras} we find a PLAS on the pair $(\ggo,\hgo)$. This last construction will be of particular importance to us, and is equivalent to Definition \ref{def_PLAS} above.
\end{remark}

We combine Proposition \ref{subalgebras} and Theorem \ref{thm_closure} to induce a PLAS on $(\ggo,\hgo)$ from a simple transitive NIL-affine action from $G$ to $H$. This construction generalizes the case where $(\ggo,\hgo)$ are both nilpotent Lie algebras as described in \cite[Remark 2.13]{bdv12-1}).

\begin{theorem}\label{from ST to PLAS}
Let $\rho: G \to \Aff(H)$ be a simply transitive NIL-affine action of a solvable Lie group $H$ on a nilpotent Lie group $H$ with corresponding morphism $\varphi=(t,D): \ggo \to \aff(\hgo)$ as in \eqref{t D parts}. Then the Lie algebra $\ggo$ with the morphism $\varphi: \ggo \to \aff(\hgo)$ forms a $t$-bijective subalgebra of $\aff(\hgo)$.
\end{theorem}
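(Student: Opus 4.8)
The plan is to verify that the morphism $\varphi = (t,D): \ggo \to \aff(\hgo)$ coming from a simply transitive action satisfies the only nontrivial hypothesis of being a $t$-bijective subalgebra, namely that $t: \ggo \to \hgo$ is a linear bijection. Since $\rho$ is a homomorphism of Lie groups, $\varphi = d\rho$ is automatically a morphism of Lie algebras, and its image $\varphi(\ggo) \subset \aff(\hgo)$ is a subalgebra; moreover $\varphi$ is injective because $\rho$ is a faithful action (the action being simply transitive forces the kernel of $\rho$ to be trivial). So the subalgebra $\tilde\ggo := \varphi(\ggo)$ is isomorphic to $\ggo$, and once we know $t|_{\tilde\ggo}$ is bijective we are done by the definition of a $t$-bijective subalgebra. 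Equivalently, by Remark \ref{PLA} and Proposition \ref{subalgebras} this then produces the desired PLAS on a pair $(\ggo,\hgo)$ (or on $(\tilde\ggo,\hgo)$ with $\tilde\ggo \cong \ggo$), which is what Corollary \ref{cor:PLAS} / Theorem \ref{thm:mainA} will assemble.

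First I would reduce the bijectivity of $t$ to a dimension count plus surjectivity: since the action is simply transitive we have $\dim\ggo = \dim\hgo$, so it suffices to show $t: \ggo \to \hgo$ is surjective, or equivalently injective. Here is where Theorem \ref{thm_closure} (via its reformulation $p(\mathfrak{u}_{\varphi(\ggo)}) = \hgo$) enters: simple transitivity tells us that the first-component projection restricted to the nilpotent radical $\mathfrak{u}_{\varphi(\ggo)}$ of the algebraic closure $\overline{\varphi(\ggo)}$ is already onto $\hgo$. Since $\mathfrak{u}_{\varphi(\ggo)} \subseteq \overline{\varphi(\ggo)}$ and $p$ restricted to $\overline{\varphi(\ggo)}$ is exactly $t$ composed appropriately, this gives $t(\overline{\varphi(\ggo)}) = \hgo$; combined with $\dim \overline{\varphi(\ggo)} \geq \dim \varphi(\ggo) = \dim\ggo = \dim\hgo$ one has to be slightly careful, because a priori $\overline{\varphi(\ggo)}$ can be strictly larger than $\varphi(\ggo)$. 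The cleaner route is to use Remark \ref{remark_ss}: for a simply transitive action the stabilizer of $e \in H$ is a maximal torus of $\overline{\rho(G)}$, so $\overline{\varphi(\ggo)} = \mathfrak{u}_{\varphi(\ggo)} + \mathfrak{s}$ with $\mathfrak{s} \subset \Der(\hgo)$, i.e. $p(\mathfrak{s}) = 0$. Hence $p(\overline{\varphi(\ggo)}) = p(\mathfrak{u}_{\varphi(\ggo)}) = \hgo$. Now $\varphi(\ggo)$ is a subalgebra with $\varphi(\ggo) + \mathfrak{s} \supseteq \mathfrak{u}_{\varphi(\ggo)}$... — at this point I would instead argue directly that $t = p \circ \varphi: \ggo \to \hgo$ is injective: if $t(x) = 0$ then $\varphi(x) = (0, D(x))$ lies in $\Der(\hgo) \cap \overline{\varphi(\ggo)}$; one shows this intersection is contained in (a conjugate of) the torus part $\mathfrak{s}$, in particular consists of semisimple elements, whereas $\varphi(x)$ for $x$ in the image of the nilshadow must be nilpotent — and a careful bookkeeping of the semi-simple splitting structure from Theorem \ref{our2}, which underlies \ref{thm_closure}, forces $\varphi(x) = 0$, hence $x = 0$ by injectivity of $\varphi$.

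The honest and shortest argument, and the one I would actually write, avoids re-deriving all of this: by Theorem \ref{thm_closure} the map $\mathfrak{u}_{\varphi(\ggo)} \to \hgo$ induced by $p$ is a bijection (it is surjective by the theorem, and injective by the dimension equality $\dim \mathfrak{u}_{\varphi(\ggo)} = \dim \hgo$ coming from $\dim\hgo = \dim\ggo = \dim\overline{\varphi(\ggo)} - \dim\mathfrak{s}$, using Remark \ref{remark_ss}). Then for $x \in \ggo$ with $t(x) = 0$, the element $\varphi(x) \in \overline{\varphi(\ggo)} = \mathfrak{u}_{\varphi(\ggo)} \oplus \mathfrak{s}$ has zero first component; writing $\varphi(x) = u + s$ with $u \in \mathfrak{u}_{\varphi(\ggo)}$, $s \in \mathfrak{s} \subset \Der(\hgo)$, and noting $p(s) = 0$, we get $p(u) = 0$, hence $u = 0$ by injectivity of $p|_{\mathfrak{u}_{\varphi(\ggo)}}$, so $\varphi(x) = s$ is a semisimple element. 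On the other hand I would argue $\varphi(x)$ lies in the derived subalgebra or the nilradical direction so it must be nilpotent as well as semisimple, hence $0$; then $x = 0$ since $\varphi$ is injective. This shows $t$ is injective, and by the dimension equality it is bijective, so $(\ggo, \varphi)$ is a $t$-bijective subalgebra of $\aff(\hgo)$, completing the proof.

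The main obstacle I anticipate is precisely this last point: cleanly justifying why $\varphi(x)$ with $t(x)=0$ is forced to be a nilpotent element of $\aff(\hgo)$, rather than an arbitrary semisimple one sitting in the torus part. This requires knowing that the torus $\mathfrak{s}$ meets $\varphi(\ggo)$ trivially in the first component in a controlled way — concretely, that the composition $\ggo \xrightarrow{\varphi} \overline{\varphi(\ggo)} \twoheadrightarrow \overline{\varphi(\ggo)}/\mathfrak{u}_{\varphi(\ggo)} \cong \mathfrak{s}$ is related to the semi-simple part of the action, which is exactly the content packaged in Theorem \ref{our2} and Remark \ref{remark_ss}. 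Everything else (that $\varphi$ is an injective Lie algebra morphism, that $\varphi(\ggo)$ is a subalgebra, the dimension count) is routine.
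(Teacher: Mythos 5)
Your overall strategy is the same as the paper's: reduce to bijectivity of $t$ via the dimension count $\dim\ggo=\dim\hgo$, and exploit Theorem \ref{thm_closure} together with the decomposition $\overline{\varphi(\ggo)}=\mathfrak{u}_{\varphi(\ggo)}+\mathfrak{s}$ with $\mathfrak{s}\subset\Der(\hgo)$ from Remark \ref{remark_ss}. However, the step you yourself flag as the main obstacle is a genuine gap, not just a detail. Having shown that $t(x)=0$ forces $\varphi(x)=s\in\mathfrak{s}$ to be semisimple, you assert that $\varphi(x)$ ``lies in the derived subalgebra or the nilradical direction so it must be nilpotent''; this is unjustified, and there is no a priori reason why an element of $\varphi(\ggo)$ with vanishing translational part should lie in the nilshadow. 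The missing ingredient is exactly the fact that the torus part meets the image trivially, $\mathfrak{s}\cap\varphi(\ggo)=0$, which the paper imports from \cite[Theorem 3.1.]{DerOri21} (it encodes the triviality of the stabilizer of $e$ for a simply transitive action). With that fact in hand the paper goes for surjectivity rather than injectivity: since $\mathfrak{s}\cap\varphi(\ggo)=0$, a dimension count gives $\varphi(\ggo)+\mathfrak{s}=\overline{\varphi(\ggo)}$, hence
$$t(\ggo)=p(\varphi(\ggo))=p(\varphi(\ggo)+\mathfrak{s})=p(\overline{\varphi(\ggo)})=p(\mathfrak{u}_{\varphi(\ggo)})=\hgo,$$
and bijectivity follows from equality of dimensions.

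Your injectivity route would also close once you have $\mathfrak{s}\cap\varphi(\ggo)=0$, since then $\varphi(x)=s$ immediately gives $\varphi(x)=0$ and $x=0$; the two routes are equivalent modulo the dimension count. If you want to avoid citing the external result, the cleanest repair of your argument is at the group level: $t(x)=0$ means $\varphi(x)\in\Der(\hgo)$, so the one-parameter subgroup $\rho(\exp_G(\lambda x))$ lies in $\Aut(H)\subset\Aff(H)$ and therefore fixes $e\in H$; freeness of a simply transitive action forces $\exp_G(\lambda x)$ to be trivial for all $\lambda$, hence $x=0$. As written, though, your proof does not establish the key non-degeneracy statement and is incomplete.
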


\begin{proof}
As the action is simple, the morphism $\varphi$ is always injective. By replacing $\ggo$ by $\varphi(\ggo)$, we can assume that $\ggo$ is a subalgebra of $\aff(\hgo)$, still corresponding to a simply transitive action on $H$, namely of the solvable Lie group $\rho(G) \subset \Aff(H)$. 

Take $\overline{\ggo}$ the algebraic closure of $\ggo$ in $\aff(\hgo)$, then by Theorem \ref{thm_closure} and Remark \ref{remark_ss}, we know that $\overline{\ggo} = \mathfrak{u}_\ggo + \mathfrak{s}$, where $\mathfrak{u}_\ggo$ is the subalgebra consisting of nilpotent elements and $\mathfrak{s} \subset \Der(\hgo)$. Now take $p: \aff(\hgo) \to \hgo$ the projection on the first component, and thus $t$ is the restriction of $p$ to $\ggo$. In particular we have that $p(\mathfrak{s}) = 0$. Since $\mathfrak{s}\cap \ggo = 0$ by \cite[Theorem 3.1.]{DerOri21} and $\overline{\ggo} = \ggo + \mathfrak{s}$ holds by comparing the dimensions, we get that $$t(\ggo) = p(\ggo) = p(\ggo + \mathfrak{s}) = p(\overline{\ggo}) = p(\mathfrak{u}_\ggo + \mathfrak{s} )= p(\mathfrak{u}_\ggo) = \hgo,$$ where the last equality holds because of Theorem \ref{thm_closure}. Because $\dim(\ggo) = \dim(\hgo)$ and $t$ is linear, we thus find that $t$ forms a bijection between $\ggo$ and $\hgo$. 
\end{proof}

\begin{corollary}\label{cor:PLAS}
Every simply transitive action $\rho: G \to \Aff(H)$ with corresponding Lie algebras $\tilde{\ggo}$ and $\hgo$ induces a PLAS on a pair $(\ggo,\hgo)$ with $\tilde{\ggo}$ isomorphic to $\ggo$.
\end{corollary}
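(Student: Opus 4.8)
The plan is to deduce Corollary \ref{cor:PLAS} as a direct consequence of Theorem \ref{from ST to PLAS} together with Proposition \ref{subalgebras} and Remark \ref{PLA}. Given a simply transitive action $\rho: G \to \Aff(H)$, let $\varphi = d\rho = (t,D): \tilde{\ggo} \to \aff(\hgo)$ be the induced morphism as in \eqref{t D parts}, where $\tilde{\ggo}$ denotes the Lie algebra of $G$. By Theorem \ref{from ST to PLAS}, the pair $(\tilde{\ggo}, \varphi)$ is a $t$-bijective subalgebra of $\aff(\hgo)$; in particular, after replacing $\tilde{\ggo}$ by $\varphi(\tilde{\ggo})$ (which is legitimate since $\varphi$ is injective, the action being simple) we may regard $\tilde{\ggo}$ as an honest $t$-bijective subalgebra of $\aff(\hgo)$ with inclusion map $(t,D)$ and $t: \tilde{\ggo} \to \hgo$ a linear bijection.

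Next I would invoke Remark \ref{PLA}: starting from this $t$-bijective subalgebra, transport the Lie bracket along $t$ to define a Lie algebra $\ggo = (V, [\cdot,\cdot]_\ggo)$ on the underlying vector space $V$ of $\hgo$ by $[x,y]_\ggo = t\left([t^{-1}(x), t^{-1}(y)]_{\tilde{\ggo}}\right)$. By construction $t: \tilde{\ggo} \to \ggo$ is a Lie algebra isomorphism, so $\ggo$ is isomorphic to $\tilde{\ggo}$, hence to the Lie algebra of $G$. The map $t^{-1}: \ggo \to \tilde{\ggo} \subset \aff(\hgo)$ is then an injective Lie algebra morphism whose first component is the identity, so $\tilde{\ggo}$ viewed inside $\aff(\hgo)$ is exactly a subalgebra on which the projection $p$ onto the first factor restricts to an isomorphism onto $\ggo$. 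Proposition \ref{subalgebras} now applies verbatim and yields a PLAS on the pair $(\ggo, \hgo)$, with left multiplication $L: V \to \Der(\hgo)$, $x \mapsto D(t^{-1}(x))$ as in \eqref{eq: left multiplication}.

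There is essentially no obstacle here: the corollary is a repackaging of Theorem \ref{from ST to PLAS} through the dictionary of Proposition \ref{subalgebras}. The only point requiring a word of care is purely bookkeeping: the statement of the corollary labels the Lie algebra of $G$ as $\tilde{\ggo}$ and the abstract Lie algebra carrying the PLAS as $\ggo$, which is the reverse of the naming in Theorem \ref{from ST to PLAS}; one simply has to be consistent about which symbol denotes which object. Thus the proof reduces to: (i) apply Theorem \ref{from ST to PLAS} to get a $t$-bijective subalgebra; (ii) apply Remark \ref{PLA} to produce the isomorphic copy $\ggo$ together with its second Lie bracket; (iii) apply Proposition \ref{subalgebras} to extract the PLAS on $(\ggo, \hgo)$.
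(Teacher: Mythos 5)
Your proof is correct and follows exactly the paper's route: the paper's own proof of Corollary \ref{cor:PLAS} is simply ``this follows immediately from the previous theorem and Remark \ref{PLA}'', which is the same chain (Theorem \ref{from ST to PLAS} gives a $t$-bijective subalgebra, Remark \ref{PLA} and Proposition \ref{subalgebras} convert it into a PLAS) that you spell out. Your observation about the swapped roles of $\ggo$ and $\tilde{\ggo}$ in the corollary's statement versus the theorem's is a fair bookkeeping point and does not affect correctness.
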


\begin{proof}
This follows immediately from the previous theorem and Remark \ref{PLA}.
	\end{proof}
We illustrate the proof with the following example from \cite{DerOri21}. From the tables in that paper, it is immediate to construct the PLAS for every simply transitive action. 

\begin{example}\label{r'_{3,0} PLAS}
	
	Let us consider Example \ref{r'_{3,0} Nil-affine action} again and compute the PLAS associated to that simply transitive NIL-affine action. Recall that $\ggo=\mathfrak r'_{3,0}$ and $\hgo=\hgo_3$.
	
	In order to induce a PLAS, we use Remark \ref{PLA} to define a Lie algebra $\tilde{\ggo}$ isomorphic to $\ggo$ but on the same vector space as $\hgo_3$. Indeed, if we take the basis $f_1, f_2, f_3$ for $\hgo_3$, thus with 	$[f_1,f_2]_{\hgo_3}=f_3$, then it holds that $t(e_1) = f_3, t(e_2) = f_1$ and $t(e_3)= f_2$ by Example \ref{r'_{3,0} Nil-affine action}. In particular, the Lie bracket of $\tilde{\ggo}$ is given by:
	\begin{align*}[f_3,f_1]_{\ggo}&=f_2, \\ [f_3,f_2]_{\ggo}&=-f_1
	\end{align*}
	respectively for the basis $\{f_1,f_2,f_3\}$.
	Now, it follows from Proposition \ref{subalgebras} that $(\tilde{\ggo},\hgo_3)$ admits a PLAS with 
	$$L_x = D(t^{-1}x)= x_1 D(e_2) + x_2 D(e_3) + x_3 D(e_1) = \begin{pmatrix}
		0&-x_3&0\\
		x_3&0&0\\
		\frac{x_2}{2}&-\frac{x_1}{2}&0
	\end{pmatrix},$$ where $x =x_1 f_1 + x_2 f_2 + x_3 f_3 \in \hgo_3$.
\end{example}

From now on, we will often describe a PLAS as a Lie algebra $\hgo = (V,[\cdot,\cdot]_\hgo)$ with a left multiplication $L$, without mentioning the induced Lie bracket $[\cdot,\cdot]_\ggo$ on $V$, see Remark \ref{rmk_mult}.

\begin{remark}
	\label{rmk:lefttoright}
	Note that neither the left multiplications nor the right multiplications are nilpotent in Example \ref{r'_{3,0} PLAS}. Indeed, the left multiplications $L_x$ have eigenvalues $\{0, \pm ix_3\}$ by a direct computation.
	
	 To compute the right multiplication, we write
	$L_x(y)=\begin{pmatrix}
		0&-x_3&0\\
		x_3&0&0\\
		\frac{x_2}{2}&-\frac{x_1}{2}&0
	\end{pmatrix}\begin{pmatrix}
		y_1\\
		y_2\\
		y_3
	\end{pmatrix}$, which is equivalent to $\begin{pmatrix}
		0&0&-y_2\\
		0&0&y_1\\
		-\frac{y_2}{2}&\frac{y_1}{2}&0
	\end{pmatrix}\begin{pmatrix}
		x_1\\
		x_2\\
		x_3
	\end{pmatrix}$. Therefore the maps $R_y=\begin{pmatrix}
		0&0&-y_2\\
		0&0&y_1\\
		-\frac{y_2}{2}&\frac{y_1}{2}&0
	\end{pmatrix}$ are the right multiplication, thus with eigenvalues $\left\{0, \pm\sqrt{ \frac{y_1^2+y_2^2}2}\right\}$.
	Moreover, the map $\id + R_y$ is not bijective for all $y\in\ggo$, as it has eigenvalues $\left\{1, 1\pm \sqrt{\frac{y_1^2+y_2^2}2}\right\}$ which might contain $0$. Therefore completeness as classically defined in Definition \ref{complete_LS} or in \ref{complete PLAS nilpotent case} can not be extended to the general solvable case.
	
	The question what is a necessary and sufficient condition on a given PLAS to induce a simply transitive NIL-affine action naturally arises, and such class of PLAS satisfying this condition should be called complete.

\end{remark}

\section{Two-step nilpotent case}

In this section we characterize PLAS that induce a simply transitive action when $G$ is solvable and $H$ is $2$-step nilpotent. The main idea is to apply a relation between PLAS on a pair $(\ggo,\hgo)$ with $\hgo$ a $2$-step nilpotent Lie algebra and left-symmetric structure on $\ggo$, as we explain in the first part.
 
\subsection{Relation PLAS and LS}
\label{subs:plastols}
The following proposition shows that given a PLAS on the pair $(\ggo,\hgo)$ with $\hgo$ $2$-step nilpotent, one finds a left-symmetric structure on $\ggo$. 
\begin{proposition}\label{from PLAS to LS}
	Let $\hgo$ be a $2$-step nilpotent Lie algebra and $\ggo$ a solvable Lie algebra defined on the same underlying vector space $V$ as $\hgo$. Then every PLAS on $(\ggo,\hgo)$ induces a left-symmetric structure on $\ggo$, by changing the left multiplication $L$ to $\tilde{L}$ given by	\begin{equation}\label{left multiplication PLAS-LS}
		\tilde L_x=L_x+\frac12\ad^{\hgo}_x.
	\end{equation}
\end{proposition}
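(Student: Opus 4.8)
The plan is to introduce the modified bilinear product $\tilde\cdot$ on $V$ defined by $x \tilde\cdot y := \tilde L_x(y) = x\cdot y + \tfrac12[x,y]_\hgo$, i.e.~with left multiplication $\tilde L$ as in \eqref{left multiplication PLAS-LS}, where $\cdot$ denotes the given PLAS on $(\ggo,\hgo)$ with left multiplication $L$, and to verify directly that $\tilde\cdot$ satisfies the two axioms of a left-symmetric structure on $\ggo$ from Definition \ref{complete_LS}. Throughout I will freely use the three defining identities (1)--(3) of a PLAS from Definition \ref{def_PLAS}, together with the hypothesis that $\hgo$ is $2$-step nilpotent, i.e.~$[x,[y,z]_\hgo]_\hgo = 0$ for all $x,y,z\in V$.

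For the first axiom, a one-line computation gives
\[
x\tilde\cdot y - y\tilde\cdot x = (x\cdot y - y\cdot x) + \tfrac12\big([x,y]_\hgo - [y,x]_\hgo\big) = \big([x,y]_\ggo - [x,y]_\hgo\big) + [x,y]_\hgo = [x,y]_\ggo,
\]
using identity (1) and antisymmetry of $[\cdot,\cdot]_\hgo$; note this step does not use the $2$-step assumption.

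For the second axiom I will expand $x\tilde\cdot(y\tilde\cdot z) - y\tilde\cdot(x\tilde\cdot z)$. Expanding $x\tilde\cdot(y\tilde\cdot z)$ produces the four terms $x\cdot(y\cdot z) + \tfrac12\, x\cdot[y,z]_\hgo + \tfrac12[x, y\cdot z]_\hgo + \tfrac14[x,[y,z]_\hgo]_\hgo$, and the last one vanishes because $\hgo$ is $2$-step nilpotent. Rewriting $x\cdot[y,z]_\hgo$ via identity (3) and subtracting the analogous expansion with $x$ and $y$ interchanged, the ``mixed'' brackets $[x, y\cdot z]_\hgo$ and $[y, x\cdot z]_\hgo$ cancel in pairs, leaving
\[
x\tilde\cdot(y\tilde\cdot z) - y\tilde\cdot(x\tilde\cdot z) = \big(x\cdot(y\cdot z) - y\cdot(x\cdot z)\big) + \tfrac12\,[x\cdot y - y\cdot x,\, z]_\hgo.
\]
Applying identity (2) to the first summand and identity (1) to the bracket in the second, this equals $[x,y]_\ggo\cdot z + \tfrac12[[x,y]_\ggo, z]_\hgo - \tfrac12[[x,y]_\hgo, z]_\hgo$, and the last term vanishes again by $2$-step nilpotency. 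The result is exactly $[x,y]_\ggo\cdot z + \tfrac12[[x,y]_\ggo, z]_\hgo = [x,y]_\ggo\tilde\cdot z$, which is the second axiom.

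The argument is essentially bookkeeping; the only conceptual content is isolating the two places where $2$-step nilpotency enters, namely the vanishing of $[x,[y,z]_\hgo]_\hgo$ and of $[[x,y]_\hgo, z]_\hgo$. I expect the main (and rather mild) obstacle to be organizing the cancellations in the second axiom without error, since a naive expansion yields about a dozen bracket terms; grouping them according to whether a slot contains a product $\bullet\cdot\bullet$, a bare vector, or an $\hgo$-bracket makes the cancellations transparent. This bookkeeping also explains why the statement is special to nilpotency class $2$: in higher class the residual terms $[x,[y,z]_\hgo]_\hgo$ and $[[x,y]_\hgo,z]_\hgo$ need no longer vanish, so $\tilde L$ as defined above fails to give a left-symmetric structure.
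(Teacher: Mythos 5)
Your proof is correct and follows essentially the same route as the paper: a direct verification of the two left-symmetric axioms from the three PLAS identities, with $2$-step nilpotency killing the terms $[x,[y,z]_\hgo]_\hgo$ and $[[x,y]_\hgo,z]_\hgo$. If anything, your element-wise expansion of the second axiom (in particular the explicit use of identity (3) and the cancellation of the mixed brackets) spells out a step the paper's proof leaves largely implicit, so there is nothing to add.
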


\begin{proof}
	Assume there exists a PLAS on $(\ggo,\hgo)$, determined by the left multiplication $L:\ggo\to\Der(\hgo)$. We define the new left multiplication $\tilde L:\ggo\to \glg(n,\RR)$ as in Equation (\ref{left multiplication PLAS-LS}) and check the condition in Definition \ref{complete_LS}. For condition (1), we get
	\begin{align*}
		\tilde L_x(y)-\tilde L(y)(x) & =(L_x+\frac12\ad^{\hgo}_x)(y)-(L(y)+\frac12\ad^{\hgo}_y)(x)\\
		& =L_x(y)-L(y)(x)+\ad^{\hgo}_x(y)= [x,y]_{\ggo}
	\end{align*}
for all $x,y \in V$, where the last equality follows from condition $(1)$ in Definition \ref{PLA}. Now, condition $(2)$ follows from condition $(2)$ in Definition \ref{PLA} and the fact that $\tilde L([x,y])= L([x,y])$ since $\ad^{\hgo}_{[x,y]}$ vanishes on a $2$-step nilpotent Lie algebra $\hgo$. Therefore it is immediate that $\tilde L$ determines a left-symmetric structure on $\ggo$.
\end{proof}

This relation can also be described on the level of subalgebras of $\aff(\hgo)$. Indeed, there exists an explicit embedding of $\aff(\hgo)$ into $\aff(V)$, where we consider $V$ as the abelian Lie algebra, first described in \cite{bdv12-1}.

\begin{proposition}\label{plas-LS for 2-step}
	For any $2$-step nilpotent Lie algebra $\hgo$ with underlying vector space $V$, we get that the map $\psi: \aff(\hgo) \to \aff(V) = V \rtimes \glg(V)$ defined as 
	\begin{equation}\label{psi map}
		\psi(x,D) = (x, D + \frac{1}{2} \ad_x)
	\end{equation}
	is an injective Lie algebra morphism. 
\end{proposition}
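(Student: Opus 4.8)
The plan is to verify directly that $\psi$ is a Lie algebra homomorphism by substituting its definition \eqref{psi map} into the bracket formula \eqref{eq: lie bracket aff} on $\aff(\hgo)$ and comparing with the bracket on $\aff(V) = V \rtimes \glg(V)$, where $V$ carries the abelian bracket so that $[(u,A),(u',A')]_{\aff(V)} = (A(u') - A'(u), A\circ A' - A'\circ A)$. Well-definedness is clear since $D + \frac{1}{2}\ad_x \in \glg(V)$ for any $x \in V$, $D \in \Der(\hgo)$, and injectivity is immediate at the end: if $\psi(x,D) = (x, D + \frac{1}{2}\ad_x) = 0$, then $x = 0$, hence $D = D + \frac{1}{2}\ad_0 = 0$.

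First I would check the first components. For $(x,D), (x',D') \in \aff(\hgo)$, the first component of $\psi([(x,D),(x',D')]_{\aff(\hgo)})$ is $[x,x']_\hgo + D(x') - D'(x)$, while the first component of $[\psi(x,D),\psi(x',D')]_{\aff(V)}$ equals $(D + \frac{1}{2}\ad_x)(x') - (D' + \frac{1}{2}\ad_{x'})(x) = D(x') - D'(x) + \frac{1}{2}[x,x']_\hgo - \frac{1}{2}[x',x]_\hgo$; since $[x',x]_\hgo = -[x,x']_\hgo$, the two halves combine into the full bracket $[x,x']_\hgo$, so the first components agree. For the second component I would expand the commutator $[\,D + \frac{1}{2}\ad_x,\ D' + \frac{1}{2}\ad_{x'}\,]$ in $\glg(V)$ into four pieces: $[D,D']$, $\frac{1}{2}[D,\ad_{x'}]$, $\frac{1}{2}[\ad_x, D']$, and $\frac{1}{4}[\ad_x,\ad_{x'}]$. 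Since $D$ and $D'$ are derivations of $\hgo$, the standard identity gives $[D,\ad_{x'}] = \ad_{D(x')}$ and $[\ad_x, D'] = -\ad_{D'(x)}$, while the Jacobi identity gives $[\ad_x,\ad_{x'}] = \ad_{[x,x']_\hgo}$. On the other side, the second component of $\psi$ applied to the $\aff(\hgo)$-bracket is $[D,D'] + \frac{1}{2}\ad_{[x,x']_\hgo + D(x') - D'(x)}$. Comparing the two, they match provided the terms $\frac{1}{4}\ad_{[x,x']_\hgo}$ and $\frac{1}{2}\ad_{[x,x']_\hgo}$ both vanish.

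This is exactly where $2$-step nilpotency of $\hgo$ is used: $[x,x']_\hgo \in [\hgo,\hgo] \subseteq \zg(\hgo)$, so $\ad_{[x,x']_\hgo} = 0$, and both offending terms disappear, leaving $[D,D'] + \frac{1}{2}\ad_{D(x')} - \frac{1}{2}\ad_{D'(x)}$ on each side. There is essentially no obstacle here beyond careful bookkeeping; the only point worth emphasizing is that the quadratic term $\frac{1}{4}[\ad_x,\ad_{x'}]$ is precisely what would spoil the homomorphism property, and it is killed exactly because $\hgo$ is $2$-step nilpotent — in higher nilpotency class this term survives and $\psi$ is no longer a Lie algebra morphism.
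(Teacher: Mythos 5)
Your proof is correct and follows essentially the same direct computation as the paper: expand both sides of the homomorphism identity using the bracket \eqref{eq: lie bracket aff}, apply $[D,\ad_{x'}]=\ad_{D(x')}$, and use $2$-step nilpotency to kill $\ad_{[x,x']_\hgo}$. If anything you are slightly more careful than the paper, which silently drops the $\tfrac{1}{4}[\ad_x,\ad_{x'}]$ term and does not address injectivity explicitly, whereas you handle both.
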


\begin{proof}	 
Let $(x,D), (x',D')\in\aff(\hgo)$, then we have
\begin{align*}
	\psi([(x,D), (x',D')])& = \psi([x, x'] + D(x') - D'(x), [D,D'])\\
	&=([x, x'] + D(x') - D'(x),[D,D']+\frac12\ad_{[x, x'] + D(x') - D'(x)}),\\
	[\psi(x,D), \psi(x',D')]& = [(x, D + \frac{1}{2} \ad_x),(x', D' + \frac{1}{2} \ad_{x'})]\\
	&=((D + \frac{1}{2} \ad_x)(x')-(D' + \frac{1}{2} \ad_{x'})(x),[D + \frac{1}{2} \ad_x,D' + \frac{1}{2} \ad_{x'}])\\
	&=(D(x')+\frac{1}{2}[x,x']-D'(x)-\frac12[x',x],[D,D']+\frac12[D,\ad_{x'}]+\frac12[\ad_x,D']).
\end{align*}
Since $D \in\Der(\hgo)$, it follows that $[D,\ad_{x^\prime}] = \ad_D(x^\prime)$ and similarly for $D^\prime$. Note also that $\ad_{[x, x']}=0$ as $\hgo$ is $2$-step nilpotent. Therefore, $\psi([(x,D), (x',D')])=	[\psi(x,D), \psi(x',D')]$.
\end{proof}

In particular, as $\psi$ is the identity in the first component, it maps $t$-bijective subalgebras $\tilde{g} \subset \aff(\hgo)$ to $t$-bijective subalgebras of $\aff(V)$. Hence, if $\cdot$ is a PLAS on a pair of Lie algebras with $\hgo$ a $2$-step nilpotent Lie algebra, then one has a corresponding subalgebra $\tilde{\ggo} \subset \aff(\hgo)$ by Proposition \ref{subalgebras}. The map $\psi$ then gives a subalgebra $\psi(\tilde{\ggo})$ which is again $t$-bijective, and thus a corresponding left-symmetric structure. It is immediate that the correspondence induced by $\psi$ is exactly the one given in Proposition \ref{from PLAS to LS}.

The main result of this subsection states that the relation between PLAS and LS in Proposition \ref{from PLAS to LS} preserves simply transitive actions. Consider $\Aff(H)$ as a Lie group, and write $\Aff^0(H)$ for the connected component containing the identity, which is equal to $H \rtimes \Aut^0(H)$ with $\Aut(H)$ the connected component of the identity in $\Aut(H)$. Let $\Psi:\Aff^0(H)\to \Aff(V)$ be the morphism such that $d\Psi=\psi$ as in Proposition \ref{plas-LS for 2-step}. Note that since we assume that the Lie group $G$ is connected, every NIL-affine action $\rho: G \to \Aff(H)$ has an image lying in $\Aff^0(H)$, so it makes sense to define $\Psi \circ \rho$ for such actions.

\begin{theorem}\label{thm_ST}
	Let	$\rho: G \to \Aff(H)$ be a representation, then the corresponding NIL-affine action is simply transitive if and only if the affine action corresponding to $\Psi\circ\rho: G \to \Aff(V)$ is simply transitive.
\end{theorem}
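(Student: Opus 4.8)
The plan is to exploit the fact that $\psi$ is the identity on the first component $V$, so that $\Psi\colon\Aff^0(H)\to\Aff(V)$ is "the identity on translations" in a suitable sense, and that simple transitivity of an affine (or NIL-affine) action depends only on the orbit map through the base point. First I would recall (Theorem \ref{thm_closure} and Remark \ref{remark_ss}) that $\rho$ is simply transitive if and only if $\varphi=d\rho=(t,D)\colon\ggo\to\aff(\hgo)$ is injective, $\dim\ggo=\dim\hgo$, and $p(\mathfrak{u}_{\varphi(\ggo)})=\hgo$, where $p$ is the projection onto the first factor; and likewise for $\Psi\circ\rho$ with $\psi\circ\varphi\colon\ggo\to\aff(V)$ and the projection onto the first factor of $\aff(V)$, which I will also call $p$ since $\psi$ commutes with these projections. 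The key observation is that $d(\Psi\circ\rho)=\psi\circ\varphi$ has the same first component $t$ as $\varphi$, since $\psi(x,D)=(x,D+\tfrac12\ad_x)$ does not change the $V$-part.

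The main step is then to compare the algebraic closures $\overline{\varphi(\ggo)}\subset\aff(\hgo)$ and $\overline{(\psi\circ\varphi)(\ggo)}=\overline{\psi(\varphi(\ggo))}\subset\aff(V)$. Since $\psi$ is an \emph{injective Lie algebra morphism} between the algebraic Lie algebras $\aff(\hgo)$ and $\aff(V)$, I would argue that $\psi$ extends to an algebraic morphism of the ambient linear algebraic groups $\Aff^0(H)\to\Aff(V)$ (this is $\Psi$), and hence it carries the algebraic closure to the algebraic closure: $\overline{\psi(\varphi(\ggo))}=\psi(\overline{\varphi(\ggo)})$. Because algebraic morphisms respect the Jordan decomposition (as recalled in the preliminaries: $\Phi(X_u)=\Phi(X)_u$, $\Phi(X_s)=\Phi(X)_s$), $\psi$ maps the nilpotent elements of $\overline{\varphi(\ggo)}$ bijectively onto the nilpotent elements of $\overline{\psi(\varphi(\ggo))}$, i.e.\ $\psi(\mathfrak{u}_{\varphi(\ggo)})=\mathfrak{u}_{\psi(\varphi(\ggo))}$. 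Combining with the fact that $\psi$ preserves the first component, we get $p(\mathfrak{u}_{\psi(\varphi(\ggo))})=p(\psi(\mathfrak{u}_{\varphi(\ggo)}))=p(\mathfrak{u}_{\varphi(\ggo)})$. Injectivity of $\psi\circ\varphi$ follows from injectivity of $\varphi$ and of $\psi$, and the equality $\dim\ggo=\dim\hgo=\dim V$ is unchanged. Therefore the criterion of Theorem \ref{thm_closure} holds for $\varphi$ if and only if it holds for $\psi\circ\varphi$, which is exactly the claim.

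The point that needs the most care --- and the one I'd expect to be the main obstacle --- is justifying $\overline{\psi(\varphi(\ggo))}=\psi(\overline{\varphi(\ggo)})$ rigorously at the level of algebraic closures of (non-algebraic) subalgebras. The clean way is: $\Psi\colon\Aff^0(H)\to\Aff(V)$ is an algebraic morphism of real linear algebraic groups with $d\Psi=\psi$; for any subgroup, the algebraic closure of its image is the image of its algebraic closure under an algebraic morphism, which on Lie algebras gives $\overline{\psi(\afrak)}=\psi(\overline{\afrak})$ for $\afrak=\varphi(\ggo)$; and then the Jordan/unipotent-radical statement follows because $\psi$ restricts to an isomorphism of algebraic Lie algebras onto its (algebraic, since it is the image of an algebraic morphism) image, sending unipotent radical to unipotent radical. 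An alternative, more hands-on route avoiding algebraic-closure machinery would be to use Theorem \ref{thm:kd}-style reasoning after passing to the nilshadow, but the $\psi$-conjugation argument above is the most transparent and is the one I would write up. Once this identification is in place, the equivalence of the two simple-transitivity criteria is immediate from Theorem \ref{thm_closure}.
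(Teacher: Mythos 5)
Your proposal is correct and follows essentially the same route as the paper's first proof of this theorem: reduce to the Lie algebra criterion of Theorem \ref{thm_closure}, establish that $\psi$ is an algebraic morphism (the paper isolates this as a lemma, proved via algebraicity of $\ad$ and of linear maps on abelian unipotent algebraic Lie algebras), deduce $\psi(\overline{\varphi(\ggo)})=\overline{\psi(\varphi(\ggo))}$ and $\psi(\mathfrak{u}_{\varphi(\ggo)})=\mathfrak{u}_{\psi(\varphi(\ggo))}$, and conclude from $p\circ\psi=q$. The only point to tighten is the justification that $\Psi$ is algebraic, which you assert rather than prove, but this is exactly the content of the paper's auxiliary lemma.
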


In Example \ref{LS no PLAS} we illustrate that not every complete LS can be found using this construction. We give two different proofs of this theorem, one by using the characterization as in Theorem \ref{thm_closure}, and a second by using the exact form of $\Psi$. The following lemma will be useful in the first proof. 

\begin{lemma}
The map $\psi$ is an algebraic morphism of Lie algebras. 
\end{lemma}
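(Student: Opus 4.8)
The plan is to show that the linear map $\psi: \aff(\hgo) \to \aff(V)$ from Proposition \ref{plas-LS for 2-step} is induced by an algebraic morphism of the ambient linear algebraic groups $\Aff(H)$ and $\Aff(V)$. Recall that $\Aff(H) = H \rtimes \Aut(\hgo)$ and $\Aff(V) = V \rtimes \Gl(V)$ are both real linear algebraic groups, so it suffices to exhibit an algebraic group morphism $\Phi: \Aff(H) \to \Aff(V)$ whose differential at the identity equals $\psi$, and then invoke the definition of an algebraic morphism of algebraic Lie algebras given in Section \ref{sec:toplas}.

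First I would write down the candidate group morphism. On the $\Aut(\hgo)$-factor, for $\alpha \in \Aut(\hgo)$ and $v \in V$, the natural guess is $\Phi(0,\alpha) = (0, \alpha)$ viewed inside $\Gl(V)$, since $\Aut(\hgo) \subset \Gl(V)$ is itself a closed algebraic subgroup and the inclusion is algebraic. On the translation factor $H$, since $H$ is a unipotent algebraic group whose underlying variety is $V$ via the exponential (polynomial) coordinates, one takes $\Phi(m, \id) = (m, c(m))$ where $c: H \to \Gl(V)$ is a polynomial map into $\Gl(V)$ whose differential at $e$ is $\frac{1}{2}\ad$; the exact polynomial is essentially the linear part of left-translation by $m$ in the $\aff(V)$-picture, which is polynomial in the coordinates of $m$ precisely because $\hgo$ is $2$-step nilpotent (so the Baker–Campbell–Hausdorff series truncates). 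The key structural input here is exactly that $\psi$ is a Lie algebra morphism (Proposition \ref{plas-LS for 2-step}), which forces these two partial definitions to patch together into a genuine group morphism on the semidirect product; concretely, $\Phi(m,\alpha) = (m, c(m)\circ\alpha)$ and one checks multiplicativity using the fact that $c$ intertwines the conjugation actions, which is the group-level shadow of the Lie-algebra identity $[D,\ad_x] = \ad_{D(x)}$ used in the proof of Proposition \ref{plas-LS for 2-step}.

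Then I would verify two things: that $\Phi$ is an algebraic morphism, i.e.\ that all coordinate functions are polynomials — this is clear on the $\Aut(\hgo)$-part and follows on the $H$-part from the $2$-step truncation of BCH as noted above — and that $d\Phi_e = \psi$. The latter is a direct computation differentiating $t \mapsto \Phi(\exp(tX))$ for $X = (x,D) \in \aff(\hgo)$: the translation component of the derivative is $x$, and the $\Gl(V)$-component is $\frac{d}{dt}\big|_0 (c(\exp(tx))\cdot \exp(tD)) = \frac{1}{2}\ad_x + D$, matching \eqref{psi map} exactly. By the general fact recalled in Section \ref{sec:toplas} that a morphism of algebraic Lie algebras is by definition one induced by an algebraic morphism of the corresponding linear algebraic groups, this completes the proof.

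The main obstacle I expect is getting the polynomial map $c: H \to \Gl(V)$ right and checking it is genuinely algebraic and that $\Phi$ is multiplicative — i.e.\ making the $2$-step nilpotency do its job so that no transcendental terms (logarithms or infinite BCH tails) creep in. Everything else is either formal (the $\Aut(\hgo)$-factor, the definition-chasing at the end) or is a derivative computation that merely re-runs the algebra already done in Proposition \ref{plas-LS for 2-step} at the infinitesimal level. Once $\Phi$ is in hand, the statement is essentially immediate.
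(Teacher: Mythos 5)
Your argument is correct, but it takes a different route from the paper. The paper stays entirely at the Lie algebra level: it writes $\psi$ as a sum/composition of maps each of which is already known to be algebraic, namely the identity on the first component, the inclusion $\Der(\hgo)\hookrightarrow\glg(V)$, the adjoint representation $\ad:\hgo\to\Der(\hgo)$ (algebraic by a standard reference), and the scalar multiplication by $\tfrac{1}{2}$ on the abelian unipotent algebraic subalgebra $\ad(\hgo)$ --- the last step using the general fact that \emph{every} linear map on an abelian Lie algebra with its unipotent algebraic structure is algebraic, as in Example \ref{ex_2structure}. You instead integrate $\psi$ to an explicit group morphism $\Phi:\Aff^0(H)\to\Aff(V)$ and check that $\Phi$ is polynomial and multiplicative; the concrete formula you are circling around is $\Phi(m,\alpha)=\bigl(\log m,\,(\id_V+\tfrac{1}{2}\ad_{\log m})\circ\alpha\bigr)$, whose multiplicativity reduces to the truncated Baker--Campbell--Hausdorff formula together with $\ad_a\ad_b=0$ and $\alpha\ad_x\alpha^{-1}=\ad_{\alpha x}$, all valid precisely because $\hgo$ is $2$-step nilpotent. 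Your approach costs more verification (the polynomiality of $c$ and the multiplicativity check, which you correctly flag as the only real work) but delivers the group-level map explicitly; the paper gets that map anyway, as the morphism $\Psi$ with $d\Psi=\psi$ appearing in the second proof of Theorem \ref{thm_ST}, so in effect you have front-loaded a computation the paper defers. The paper's version buys brevity by quoting general facts about algebraic morphisms rather than exhibiting the integrated map.
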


\begin{proof}
First note that any linear map on an abelian Lie algebra $V$, given its unique structure as a unipotent algebraic Lie algebra, is algebraic. One can check this for example using the explicit nilpotent embedding of $\RR$ into $\glg(2,\RR)$ as in Example \ref{ex_2structure} above. Now for any Lie algebra $\hgo$, the adjoint map $\ad:\hgo \to \Der(\hgo)$ is algebraic by \cite[Proposition 10.3.]{hump81-1}. The image $\ad(\hgo)$ is an algebraic abelian subalgebra of $\Der(\hgo)$, corresponding to a unipotent algebraic group. In particular the linear map $x \mapsto \frac{1}{2} x$ is algebraic, and thus it is immediate to see that $\psi$ is algebraic as composition of algebraic maps.
\end{proof}

The proof now follows directly from Theorem \ref{thm_closure}.

\begin{proof}[First proof of Theorem \ref{thm_ST}]
	We will prove this result on the level of the Lie algebras, by considering the induced morphism $\varphi: \ggo \to \aff(\hgo)$ and $\psi \circ \varphi: \ggo \to \aff(V)$. Note that there can only be a simply transitive action if the dimension $\dim(\ggo) = \dim(\hgo)$. So from now on we will assume that the dimensions are equal, because otherwise the result is immediate. Similarly as before, we can also replace $\ggo$ by $\varphi(\ggo)$ to assume that $\ggo \subset \aff(\hgo)$.
	
	Write $p: \aff(\hgo) \to V$ and $q: \aff(V) \to V$ for the projections on the first component. We know that the action corresponding to $\ggo$ is simply transitive if and only if $p(\mathfrak{u}_{\ggo}) = \hgo$, and a similar statement for $\psi(\ggo)$ and $q$. However, as $\psi$ is an algebraic morphism, we know that $\psi(\overline{\ggo}) = \overline{\psi(\ggo)}$, and also that $\mathfrak{u}_{\psi(\ggo)} = \psi(\mathfrak{u}_\ggo)$. Now, by definition of $\psi$ we find that $p \circ \psi = q$, and thus that $q(\mathfrak{u}_{\psi(\ggo)} ) = p(\mathfrak{u}_\ggo)$. Hence it immediately follows that $\ggo$ corresponds to a simply transitive action if and only if $\psi(\ggo)$ corresponds to a simply transitive action.
\end{proof}

The second proof is more explicit, by making some computations with $\psi$ and $\Psi$. Note that in $\Aff(H)$ both $H$ and $\Aut(H)$ can be considered as subgroups, namely every $h \in H$ corresponds to $(h,\id_H)$ and $\alpha\in \Aut(H)$ to $(e_H,\alpha)$. With these assumptions, we can write $(h,\alpha)$ as the product $h \alpha$ for $h \in H$ and $\alpha \in \Aut(H)$ using this identification.
\begin{proof}[Second proof of Theorem \ref{thm_ST}] 
	Note that $\psi(\Der(\hgo)) \subset \Der(V)$ and thus that $\Psi(\Aut^0(H)) \subset \Gl(V)$. The exponential map $\exp_\hgo: \hgo \to H$ on $\hgo$ is bijective, with inverse denoted $\log: H \to \hgo$. To describe the exponential map on $V \rtimes \glg(V)$, we note that we can represent the element $(X,D) \in V \rtimes \glg(V)$ as a matrix $$\begin{pmatrix}D & X \\ 0 & 0 \end{pmatrix} \in \glg(n+1,\RR)$$ after choosing a basis for $V$. For every $x \in \hgo$, we have that $\psi(x) = (x,\frac{1}{2} \ad_x)$ and thus the square of the corresponding matrix will be $0$. In particular, $\exp(\psi(x)) = (x,\id_V + \frac{1}{2}\ad_x)$ and thus for every $X \in H$ we have that $$\Psi(X) = \exp(\psi(\log(X))) = (\log(X), \id_V + \frac{1}{2} \ad_{\log(X)}).$$
	
	Now assume that $\rho: G \to \Aff(H)$ is a NIL-affine action, with $\rho(g) = (T(g),L(g))$ for maps $T: G \to H$ and $L: G \to \Aut(H)$. Then $\rho$ is simply transitive if and only if the map $T$ is a bijection. Now $\Psi\circ \rho = (\tilde{T},\tilde{L})$ is of the form $$\Psi(\rho(g)) = \Psi(T(g),L(g)) = \Psi(T(g)) \Psi(L(g)) = (\log(T(g)), \id_V + \frac{1}{2} \ad_{\log(T(g))}) \Psi(L(g)).$$ In particular, $\tilde{T}(g) = \log(T(g))$, and as $\log: H \to \hgo$ is a bijection, we find that $T$ is a bijection if and only if $\tilde{T}$ is a bijection. Equivalently, the theorem holds.
	\end{proof}




\subsection{Completeness for PLAS}
\label{sec:complete}

Since every simply transitive NIL-affine action on a $2$-step nil induces a simply transitive action, and therefore a complete LS, then we can now define completeness for a PLAS in the $2$-step nilpotent case using this correspondence.

\begin{definition}\label{complete PLAS 2step nil}
 	A post-Lie algebra structure on a pair $(\ggo,\hgo)$ when $\ggo$ is solvable and $\hgo$ is $2$-steps nilpotent is called {\it complete} if  $$R_y -\frac12\ad^\hgo_y$$ is nilpotent for every $y \in V$.
\end{definition}

\begin{example}\label{example: complete plas}
	In Example \ref{r'_{3,0} PLAS} we had left multiplication $L_x = \begin{pmatrix}
		0&-x_3&0\\
		x_3&0&0\\
		\frac{x_2}{2}&-\frac{x_1}{2}&0
	\end{pmatrix},$ and right multiplication given by
$R_y=\begin{pmatrix}
	0&0&-y_2\\
	0&0&y_1\\
	-\frac{y_2}{2}&\frac{y_1}{2}&0
	\end{pmatrix}.$ On the other hand,  $\ad_y=\begin{pmatrix}
	0&0&0\\
	0&0&0\\
	-y_2&y_1&0
\end{pmatrix}$, thus the map $$R_y -\frac12\ad^\hgo_y = \begin{pmatrix} 	0&0&-y_2\\
0&0&y_1\\
0&0&0 \end{pmatrix} $$ is nilpotent for all $y \in V$.
	Therefore, the PLAS from this example is complete. At this point, we want to remark that $c = -\frac{1}{2}$ is the only real number such that $R_y -c\ad^\hgo_y$ is nilpotent.
\end{example}

\begin{proposition}\label{complete PLAS - complete LS}
Let $\cdot$ be a PLAS on a pair of Lie algebras $(\ggo,\hgo)$ where $\hgo$ is $2$-step nilpotent. The PLAS is complete if and only if the LS induced by $\psi$ above is complete.
\end{proposition}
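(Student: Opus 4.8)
The plan is to unwind the definitions of completeness on both sides and observe that the map $\psi$ from Proposition \ref{plas-LS for 2-step} is exactly what converts the relevant nilpotency condition for the PLAS into the relevant nilpotency condition for the induced LS. Recall from Proposition \ref{from PLAS to LS} that the LS induced by $\psi$ has left multiplication $\tilde L_x = L_x + \frac12 \ad^\hgo_x$. The first step is therefore to compute the right multiplication of this LS in terms of the right multiplication $R$ of the PLAS. Using condition (1) for the LS, $\tilde L_x(y) - \tilde L_y(x) = [x,y]_\ggo$, and the defining relations, one gets $\tilde R_y(x) = \tilde L_x(y) - [x,y]_\ggo$; a direct manipulation (entirely analogous to the one carried out in Remark \ref{rmk:lefttoright} and Example \ref{example: complete plas}) shows that
\begin{equation*}
\tilde R_y = R_y - \tfrac12 \ad^\hgo_y,
\end{equation*}
where I am using that $\ad^\hgo_x(y) = -\ad^\hgo_y(x)$ and that, for a PLAS, $R_y(x) = L_x(y) - [x,y]_\ggo + [x,y]_\hgo$ by condition (1) of Definition \ref{def_PLAS}.

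Once that identity is in hand, the proof is essentially immediate. By Segal's theorem (the first unnumbered theorem after Definition \ref{complete_LS}), the induced LS on $\ggo$ is complete if and only if $\tilde R_y$ is nilpotent for every $y \in V$. By the identity above, this is equivalent to $R_y - \frac12 \ad^\hgo_y$ being nilpotent for every $y$, which is exactly Definition \ref{complete PLAS 2step nil} of completeness for the PLAS. So the two conditions literally coincide after the bookkeeping. I would phrase the proof as: establish the formula for $\tilde R_y$, then quote Segal's characterization and Definition \ref{complete PLAS 2step nil}.

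The only mild subtlety — and the step I would be most careful about — is the linear-algebra bookkeeping in the computation of $\tilde R_y$, since three different products ($[\cdot,\cdot]_\ggo$, $[\cdot,\cdot]_\hgo$, and the PLAS product) and the factor $\frac12$ all interact, and one must use that $\hgo$ is $2$-step nilpotent at the point where terms of the form $\ad^\hgo_{[x,y]}$ would otherwise appear (exactly as in the proof of Proposition \ref{from PLAS to LS}). It is worth noting explicitly that since $\psi$ is the identity on the first component, the $t$-bijective subalgebra of $\aff(\hgo)$ attached to the PLAS and the $t$-bijective subalgebra of $\aff(V)$ attached to the induced LS have the same Lie algebra $\tilde\ggo$ sitting over them; hence "complete" is being measured against the same ambient data on both sides, which is what makes the equivalence natural rather than accidental. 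No new ideas beyond the already-established $\psi$-correspondence and Segal's theorem are needed.
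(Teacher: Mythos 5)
Your proposal is correct and follows essentially the same route as the paper's own proof: the whole content is the identity $\tilde R_y = R_y - \tfrac12\ad^{\hgo}_y$, which is immediate from $\tilde L_x = L_x + \tfrac12\ad^{\hgo}_x$ because $\tilde R_y(x)=x\cdot y=\tilde L_x(y)$ and $\ad^{\hgo}_x(y)=-\ad^{\hgo}_y(x)$, after which one invokes Segal's characterization and Definition \ref{complete PLAS 2step nil} exactly as you do. One small correction: your intermediate formulas $\tilde R_y(x)=\tilde L_x(y)-[x,y]_\ggo$ and $R_y(x)=L_x(y)-[x,y]_\ggo+[x,y]_\hgo$ have the arguments transposed --- since $R_y(x)=x\cdot y=L_x(y)$ by definition, those right-hand sides compute $\tilde R_x(y)$ and $R_x(y)$ respectively --- so that detour through condition (1) is both incorrect as written and unnecessary, and should be replaced by the one-line computation above.
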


\begin{proof}
	Recall from Equation \eqref{left multiplication PLAS-LS} that $\tilde L_x=L_x+\frac12\ad^{\hgo}_x$. This immediately implies that $\tilde R_y=R_y-\frac12\ad^{\hgo}_y$, where $\tilde R$ is the right multiplication associated to the left-symmetric structure. Therefore the PLAS $\cdot$ is complete if and only if $\tilde{R}_y$ is nilpotent or thus if and only if the LS is complete.
\end{proof}

In the following example we show that not every complete left-symmetric structure can be found using the construction of Section \ref{subs:plastols}, or thus not every $t$-bijective subalgebra of $\aff(V)$ lies in the image of $\psi$ for some $2$-step nilpotent Lie algebra $\hgo$.

\begin{example}\label{LS no PLAS}
	Consider the Lie algebra $\ggo_\lambda=\mathfrak r_{3,\lambda}$, with Lie brackets  given by 
	$$[e_1,e_2]=e_2, \quad [e_1,e_3]=\lambda e_3.$$ 
	In \cite{DerOri21} it is shown that there exists a simply transitive NIL-affine action of the corresponding Lie group on the Heisenberg group $H_3$ if and only if $\lambda \in \{\pm1\}$.
	
	On the other hand, it is easy to check that the map
	$\varphi:\ggo_\lambda\to \aff(V)$, where 
	$$\varphi(x_1,x_2,x_3)=\left( (x_1,x_2,x_3),\begin{pmatrix}
		0&0&0\\
		0&x_1&0\\
		0&0&\lambda x_1
	\end{pmatrix}\right)$$
	induce a complete left-symmetric structure on $\ggo_\lambda$ where the left multiplication is given by 
	$L_x=\begin{pmatrix}
		0&0&0\\
		0&x_1&0\\
		0&0&\lambda x_1
	\end{pmatrix}.$
	In particular, for any $\lambda \notin \{\pm1\}$, $\ggo_\lambda$ admits a left-symmetric structure, but $(\ggo_\lambda,\hgo_3)$ does not admit any PLAS.
\end{example}

Using Theorem \ref{thm_ST}, we now see that this notion of completeness really characterizes the simply transitive actions.  

\begin{theorem}
Let $\cdot$ be a PLAS on the pair $(\ggo,\hgo)$ with $\hgo$ $2$-step nilpotent. Consider the corresponding subalgebra $\tilde{\ggo} \subset \aff(\hgo)$ with a representation $\rho: G \to \Aff(H)$. The NIL-affine action of $G$ on $H$ is simply transitive if and only if $\cdot$ is complete.
\end{theorem}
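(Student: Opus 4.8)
The plan is to chain together the results already established in this section. First I would observe that a PLAS on $(\ggo,\hgo)$ with $\hgo$ $2$-step nilpotent corresponds, via Proposition \ref{subalgebras}, to a $t$-bijective subalgebra $\tilde{\ggo} \subset \aff(\hgo)$, and thus to a NIL-affine action $\rho: G \to \Aff(H)$ of the $1$-connected solvable Lie group $G$ with Lie algebra $\tilde{\ggo}$ (using that the action corresponding to a $t$-bijective subalgebra is automatically simply transitive on the orbit level only if the projection is bijective — but here I just need the representation, not yet transitivity). Then by Theorem \ref{thm_ST}, this action is simply transitive if and only if the affine action corresponding to $\Psi \circ \rho: G \to \Aff(V)$ is simply transitive.

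Next I would invoke Theorem \ref{thm_affinest}: the affine action of $G$ on $V = \RR^n$ corresponding to $\Psi\circ\rho$ is simply transitive if and only if the associated left-symmetric structure on $\ggo$ is complete. The left-symmetric structure in question is exactly the one described in Proposition \ref{from PLAS to LS} (equivalently, the one obtained by applying $\psi$ on the Lie algebra level), because $d(\Psi\circ\rho) = \psi\circ\varphi$ and $\psi$ is the map $(x,D)\mapsto(x,D+\tfrac12\ad_x)$, so the left multiplication of this LS is $\tilde L_x = L_x + \tfrac12\ad_x^\hgo$. Finally, by Proposition \ref{complete PLAS - complete LS}, this LS is complete if and only if the PLAS $\cdot$ is complete, i.e.~if and only if $R_y - \tfrac12\ad_y^\hgo$ is nilpotent for every $y\in V$ (Definition \ref{complete PLAS 2step nil}, using Segal's equivalence of the two notions of completeness as recalled just after Definition \ref{complete_LS}).

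Putting these equivalences in a row:
\begin{align*}
\text{$\rho$ simply transitive} &\iff \text{$\Psi\circ\rho$ simply transitive} \quad(\text{Theorem \ref{thm_ST}})\\
&\iff \text{the induced LS on $\ggo$ is complete} \quad(\text{Theorem \ref{thm_affinest}})\\
&\iff \text{$\cdot$ is complete} \quad(\text{Proposition \ref{complete PLAS - complete LS}}).
\end{align*}
This is essentially a bookkeeping argument, so there is no deep obstacle; the one point requiring care is the precise bridge between "PLAS $\cdot$" and "the representation $\rho$": one must make sure that the left-symmetric structure coming from $\Psi\circ\rho$ via Theorem \ref{thm_affinest} is literally the one of Proposition \ref{from PLAS to LS} and not merely isomorphic to it. This follows from the fact that $\psi$ acts as the identity on the first component, so $\psi$ maps the $t$-bijective subalgebra $\tilde{\ggo}\subset\aff(\hgo)$ attached to $\cdot$ to the $t$-bijective subalgebra $\psi(\tilde{\ggo})\subset\aff(V)$ attached to $\tilde L$, exactly as noted in the paragraph following Proposition \ref{plas-LS for 2-step}. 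With that identification in hand, the chain of "if and only if"s above completes the proof.
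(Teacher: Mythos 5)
Your proposal is correct and follows exactly the same route as the paper, which proves this theorem by combining Theorem \ref{thm_ST} with Proposition \ref{complete PLAS - complete LS} (and, implicitly, Kim's correspondence from Theorem \ref{thm_affinest}). The extra care you take in identifying the left-symmetric structure attached to $\Psi\circ\rho$ with the one from Proposition \ref{from PLAS to LS} is a point the paper leaves implicit, but it is handled by the same observation you cite, namely that $\psi$ is the identity on the first component.
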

 
 \begin{proof}
This follows immediately from Theorem \ref{thm_ST} and Proposition \ref{complete PLAS - complete LS} above.
 \end{proof}
 

%

We can summarize the correspondence between simply transitive NIL-affine action and complete PLAS as follows.

\begin{theorem}\label{thm main B}
	Let	$G, H$ be $1$-connected Lie groups, with $G$ solvable and $H$ $2$-step nilpotent.  There exists a simply transitive NIL-affine action $\rho: G \to \Aff(H)$ if and only if there exists a Lie algebra $\tilde{\ggo}$, isomorphic to $\ggo$, defined under the same vector space as $\hgo$ and the pair $ (\tilde{\ggo},\hgo)$ admits a complete PLAS. 
\end{theorem}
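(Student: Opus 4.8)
The plan is to deduce this statement directly from the results already obtained in this section; it is a group-level reformulation of the characterization established just above, namely that a PLAS on a pair with $\hgo$ two-step nilpotent is complete (in the sense of Definition \ref{complete PLAS 2step nil}) precisely when the NIL-affine action of the corresponding subalgebra $\tilde{\ggo}\subset\aff(\hgo)$ is simply transitive. So the only thing to organize is the translation between a PLAS, the associated $t$-bijective subalgebra of $\aff(\hgo)$, and the Lie group homomorphism it integrates to, keeping track of which Lie algebra is placed on the underlying vector space of $\hgo$.

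First I would treat the ``if'' direction. Suppose $(\tilde{\ggo},\hgo)$ carries a complete PLAS with $\tilde{\ggo}$ isomorphic to $\ggo$. By Proposition \ref{subalgebras} this PLAS corresponds to a $t$-bijective subalgebra $\tilde{\ggo}\subset\aff(\hgo)$. Since $G$ is $1$-connected and $\aff(\hgo)$ is the Lie algebra of $\Aff^0(H)$, the inclusion $\tilde{\ggo}\hookrightarrow\aff(\hgo)$ integrates, through the isomorphism $\ggo\cong\tilde{\ggo}$, to a homomorphism $\rho: G\to\Aff^0(H)\subset\Aff(H)$ with $d\rho$ equal to this inclusion. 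Completeness of the PLAS then forces, by the preceding theorem, the NIL-affine action $\rho$ to be simply transitive. Alternatively one can route this through $\Psi$: by the discussion after Proposition \ref{plas-LS for 2-step}, the affine action $\Psi\circ\rho: G\to\Aff(V)$ corresponds to the left-symmetric structure with left multiplication $\tilde L_x=L_x+\tfrac12\ad^{\hgo}_x$, which is complete by Proposition \ref{complete PLAS - complete LS}; so $\Psi\circ\rho$ is simply transitive by Theorem \ref{thm_affinest}, hence $\rho$ is simply transitive by Theorem \ref{thm_ST}.

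For the ``only if'' direction, suppose $\rho: G\to\Aff(H)$ is a simply transitive NIL-affine action, so $\rho(G)\subset\Aff^0(H)$ by connectedness of $G$. Corollary \ref{cor:PLAS} (via Theorem \ref{from ST to PLAS} and Remark \ref{PLA}) yields a Lie algebra $\tilde{\ggo}$ isomorphic to $\ggo$ on the underlying vector space of $\hgo$ together with a PLAS on $(\tilde{\ggo},\hgo)$, whose associated $t$-bijective subalgebra is $\varphi(\ggo)\subset\aff(\hgo)$ for $\varphi=d\rho$. The homomorphism integrating $\varphi(\ggo)$ has $1$-connected source with Lie algebra isomorphic to $\ggo$, hence isomorphic to $G$, and differential the inclusion $\varphi(\ggo)\hookrightarrow\aff(\hgo)$, so it coincides with $\rho$ up to this isomorphism and is therefore simply transitive; by the preceding theorem the PLAS is complete. (Equivalently: $\Psi\circ\rho$ is simply transitive by Theorem \ref{thm_ST}, the induced LS on $\tilde{\ggo}$ is complete by Theorem \ref{thm_affinest}, and Proposition \ref{complete PLAS - complete LS} transports completeness back to the PLAS.) The one point needing care, in each direction, is identifying the homomorphism integrating the $t$-bijective subalgebra with the given $\rho$; this is immediate because a homomorphism out of a $1$-connected Lie group is determined by its differential, so there is no genuine obstacle beyond this bookkeeping, the substantive inputs (Theorem \ref{thm_ST} and Proposition \ref{complete PLAS - complete LS}) having already been isolated.
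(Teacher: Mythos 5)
Your proposal is correct and follows essentially the same route as the paper, which presents Theorem \ref{thm main B} as a direct summary of the preceding theorem (simple transitivity $\Leftrightarrow$ completeness of the associated PLAS) combined with Corollary \ref{cor:PLAS}; your additional bookkeeping about integrating the $t$-bijective subalgebra back to $\rho$ via $1$-connectedness is exactly the implicit step the paper leaves to the reader.
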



Finally, we check that our definition corresponds with the known definitions of completeness. If $\hgo$ is abelian, then this is clear, since then $\ad_y^\hgo = 0$ for all $y \in V$, and thus a PLAS is complete if and only if the right multiplication is nilpotent. The statement then follows from Theorem \ref{thm_affinest}. For the case that both $\ggo$ and $\hgo$ are nilpotent Lie algebras, we first need the following easy lemma.

\begin{lemma}
	Let $\hgo$ be a nilpotent Lie algebra. If $D:\hgo\to\hgo$ is a nilpotent derivation, then $D+\ad^\hgo_x$ is also nilpotent for any $x\in V$.
\end{lemma}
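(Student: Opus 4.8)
The plan is to exploit the fact that $D$ and $\ad^\hgo_x$ interact in a controlled way inside $\Der(\hgo)$: since $D$ is a derivation, $[D,\ad^\hgo_x] = \ad^\hgo_{D(x)}$, so the subspace $\ad^\hgo(\hgo) \subset \Der(\hgo)$ is $D$-invariant and the span $\langle D\rangle + \ad^\hgo(\hgo)$ is a subalgebra of $\Der(\hgo)$. First I would observe that $\ad^\hgo(\hgo)$ is a nilpotent Lie algebra (being a quotient of the nilpotent Lie algebra $\hgo$), and that every element of it acts nilpotently on $\hgo$ because $\hgo$ is nilpotent. Together with the hypothesis that $D$ acts nilpotently, this says that $\langle D\rangle + \ad^\hgo(\hgo)$ is a Lie subalgebra of $\glg(\hgo)$ generated by nilpotent operators; moreover the ideal $\ad^\hgo(\hgo)$ is itself nilpotent as a Lie algebra and $D$ acts nilpotently on it (as $\ad^\hgo_{D^k(x)}$ eventually vanishes since $D$ is a nilpotent operator). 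Hence $\langle D\rangle + \ad^\hgo(\hgo)$ is a nilpotent Lie algebra, and all of its elements — in particular $D + \ad^\hgo_x$ — act nilpotently on $\hgo$ by Engel's theorem (or directly: a Lie algebra of endomorphisms that is nilpotent as a Lie algebra and consists of, or is generated by, nilpotent elements has all elements nilpotent).

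Concretely, the key steps in order are: (1) record the bracket identity $[D,\ad^\hgo_x]=\ad^\hgo_{D(x)}$, valid because $D\in\Der(\hgo)$; (2) conclude that $\mathfrak{a} := \ad^\hgo(\hgo)$ is an ideal in the subalgebra $\mathfrak{b} := \RR D + \mathfrak{a} \subset \glg(\hgo)$; (3) check that $\mathfrak b$ is a nilpotent Lie algebra — the ideal $\mathfrak a$ is nilpotent as quotient of $\hgo$, and $\ad_D$ restricted to $\mathfrak a$ is nilpotent because iterating the identity in (1) gives $(\ad_D)^k(\ad^\hgo_x)=\ad^\hgo_{D^k(x)}=0$ for $k$ large; (4) note every element of $\mathfrak b$ acts nilpotently on $V=\hgo$: elements of $\mathfrak a$ do so by nilpotency of $\hgo$, $D$ does so by hypothesis, and then apply Engel's theorem to the nilpotent Lie algebra $\mathfrak b$ of endomorphisms (whose generators are $\ad$-nilpotent, hence all elements are ad-nilpotent on $\mathfrak b$, and being nilpotent elements individually on a common invariant subspace structure) to get a flag on $V$ on which all of $\mathfrak b$ is strictly upper triangular; in particular $D+\ad^\hgo_x$ is nilpotent.

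The main obstacle I expect is step (4): Engel's theorem in its usual form tells you that if every element of a Lie subalgebra of $\glg(V)$ acts nilpotently, then there is a common flag — but a priori we only know $D$ and the elements of $\mathfrak a$ act nilpotently, not an arbitrary element like $D+\ad^\hgo_x$, which is exactly what we want to prove. The fix is to argue at the Lie-algebra level first: show $\mathfrak b$ is a nilpotent Lie algebra, so $\ad_{\mathfrak b}$ is nilpotent, and combine this with the fact that $\mathfrak b$ is \emph{generated} by the ad-nilpotent-and-endomorphism-nilpotent elements $D$ and $\ad^\hgo_x$; a standard refinement of Engel's theorem (e.g. the version: a Lie algebra of endomorphisms spanned by nilpotent endomorphisms that is nilpotent as a Lie algebra consists entirely of nilpotent endomorphisms) then closes the gap. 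Alternatively, one can give a hands-on proof: pick the descending central series of $\hgo$ as a $D$-invariant and $\ad^\hgo_x$-invariant flag $\hgo = \hgo^1 \supset \hgo^2 \supset \cdots$; on the associated graded, $\ad^\hgo_x$ acts as $0$ while $D$ acts nilpotently, so on each graded piece $D+\ad^\hgo_x$ acts nilpotently, hence $D+\ad^\hgo_x$ is nilpotent on $\hgo$ — this avoids invoking Engel entirely and is probably the cleanest route to write up.
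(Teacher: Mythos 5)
Your proposal is correct, and the ``hands-on'' argument you sketch at the end is essentially the paper's own proof: the paper proves the lemma by induction on the nilpotency class, quotienting by the last nonzero term $\gamma_c(\hgo)$ of the lower central series, using that $D$ preserves this filtration, that $\ad^\hgo_x$ shifts it by one step, and that $D$ restricted to $\gamma_c(\hgo)$ is nilpotent. Your graded version (on each $\gamma_i/\gamma_{i+1}$ the operator $D+\ad^\hgo_x$ induces the same map as $D$, hence acts nilpotently) is the same idea unrolled, and is a complete proof. Your primary route via the subalgebra $\mathfrak b=\RR D+\ad^\hgo(\hgo)\subset\glg(\hgo)$ is genuinely different and can be made to work, but be aware of a latent circularity in step (3): the fact that a nilpotent Lie algebra extended by a nilpotent derivation is again nilpotent is, via Engel, essentially equivalent to the statement that a nilpotent derivation plus an inner derivation is ad-nilpotent --- which is the lemma itself (applied to $\ad^\hgo(\hgo)\cong\hgo/\zg(\hgo)$). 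One can repair this by an induction on dimension, and the refinement of Engel you invoke in step (4) does hold (for a nilpotent linear Lie algebra the nilpotent elements form the common kernel of the generalized weights, hence a subspace, so being spanned by nilpotent endomorphisms forces all elements to be nilpotent), but this machinery is considerably heavier than needed. The flag argument you identify as the cleanest write-up is indeed the one to use, and it is what the paper does.
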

\begin{proof}
We prove this via induction on the nilpotency class of $\hgo$. For abelian Lie algebras, the statement is trivial as $\ad^\hgo_x = 0$ in that case. Now assume that statement holds for nilpotency class $c$, and that $\hgo$ is a Lie algebra of nilpotency class $c+1$. 

If $\gamma_c(\hgo)$ is step $c$ of the lower central series, then we consider the quotient algebra $\overline{\hgo} = \faktor{\hgo}{\gamma_c(\hgo)}$, which is a Lie algebra of nilpotency class $c$. The derivation $D$ induces a derivation of the quotient $\overline{\hgo}$, written as $\overline{D}$, and cleary $\overline{D}$ is again nilpotent. From the induction hypothesis, we know that the induced map $$\overline{D + \ad^\hgo_x} = \overline{D} + \ad^{\overline{\hgo}}_{\overline{x}}$$ on $\overline{\hgo}$ is nilpotent, where $\overline{x}$ is the projection of the element $x$ in $\overline{\hgo}$. So there exists some $k > 0$ such that $\left(D + \ad^\hgo_x\right)^k(\gamma_c(\hgo)) \subset \gamma_c(\hgo)$. Now, as $\ad^\hgo_x(\gamma_c(\hgo)) = 0$ and the restriction of $D$ to $\gamma_c(\hgo)$ is nilpotent, the result follows.
\end{proof}
\begin{proposition}
	Let $\cdot$ be a PLAS on the pair $(\ggo,\hgo)$ with $\ggo$ nilpotent and $\hgo$ $2$-step nilpotent. The map $R_y - \frac{1}{2}\ad_y^\hgo$ is nilpotent for all $y \in V$ if and only if the map $L_x$ is nilpotent for all $x\in V$.
\end{proposition}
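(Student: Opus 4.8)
The plan is to exploit the relation between the PLAS on $(\ggo,\hgo)$ and the left-symmetric structure it induces on $\ggo$ via Proposition \ref{from PLAS to LS}, together with the characterization of completeness of LS on nilpotent Lie algebras via the left multiplication (Theorem \ref{thm_completenil}). Recall from Equation \eqref{left multiplication PLAS-LS} that the induced LS has left multiplication $\tilde L_x = L_x + \frac12 \ad^\hgo_x$ and, as noted in the proof of Proposition \ref{complete PLAS - complete LS}, right multiplication $\tilde R_y = R_y - \frac12 \ad^\hgo_y$. Since $\ggo$ is nilpotent, Theorem \ref{thm_completenil} and Segal's theorem tell us that the LS on $\ggo$ is complete if and only if $\tilde R_y$ is nilpotent for all $y$, and also if and only if $\tilde L_x$ is nilpotent for all $x$. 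So the proposition would follow immediately once we show that $L_x$ is nilpotent for all $x$ precisely when $\tilde L_x = L_x + \frac12\ad^\hgo_x$ is nilpotent for all $x$.

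First I would handle the forward direction: assume $L_x$ is nilpotent for every $x \in V$. Each $L_x$ is a derivation of $\hgo$ (condition (3) of a PLAS), so the preceding lemma applies verbatim with $D = L_x$ and the same $x$: $L_x + \ad^\hgo_x$ is nilpotent. But I actually need $L_x + \frac12\ad^\hgo_x$; here I would either invoke a slight variant of the lemma with $\frac12\ad^\hgo_x$ in place of $\ad^\hgo_x$ (the proof is identical, since $\ad^\hgo_x$ still annihilates $\gamma_c(\hgo)$), or, more cleanly, observe that $\frac12\ad^\hgo_x = \ad^\hgo_{x/2}$ and that $\tilde L_x = L_x + \ad^\hgo_{x/2}$; since $\hgo$ is $2$-step nilpotent, $L_{x/2} = \frac12 L_x$ is again a nilpotent derivation, and one can run the lemma with $D = \frac12 L_x$ and base point $x/2$, giving that $\frac12 L_x + \ad^\hgo_{x/2} = \frac12\tilde L_x$ is nilpotent, hence $\tilde L_x$ is nilpotent. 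Then by Theorem \ref{thm_completenil} the LS is complete, so $\tilde R_y$ is nilpotent for all $y$, which is the desired conclusion.

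For the converse, assume $\tilde R_y = R_y - \frac12\ad^\hgo_y$ is nilpotent for every $y$. Then the induced LS on $\ggo$ is complete by definition, and since $\ggo$ is nilpotent, Theorem \ref{thm_completenil} gives that $\tilde L_x = L_x + \frac12\ad^\hgo_x$ is nilpotent for all $x$. Now I run the same lemma-argument in reverse: $\ad^\hgo_x$ is a nilpotent derivation of $\hgo$ (it is inner, and $\hgo$ is nilpotent), so $-\frac12\ad^\hgo_x = \ad^\hgo_{-x/2}$ is too, and the lemma — applied now to the nilpotent derivation $\frac12\tilde L_x$ together with the element $-x/2$, using again that $\hgo$ is $2$-step so $\frac12\tilde L_x$ restricted appropriately behaves well — yields that $\frac12\tilde L_x + \ad^\hgo_{-x/2} = \frac12 L_x$ is nilpotent, hence $L_x$ is nilpotent. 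I expect the main (minor) obstacle to be bookkeeping the factor of $\frac12$ and making sure the lemma as stated, or an obvious restatement of it, covers adding $\frac12\ad^\hgo_x$ rather than $\ad^\hgo_x$; this is purely cosmetic because the lemma's proof only uses that $\ad^\hgo_x$ kills the last nonzero term of the lower central series and that the relevant derivation restricts nilpotently there, both of which are scaling-invariant. The only genuinely substantive inputs are Theorem \ref{thm_completenil} (completeness of LS on nilpotent algebras is detected by $L$, equivalently by $R$) and the lemma on nilpotency of $D + \ad^\hgo_x$.
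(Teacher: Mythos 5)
Your proposal is correct and follows essentially the same route as the paper: reduce to completeness of the induced left-symmetric structure via Proposition \ref{complete PLAS - complete LS}, use Theorem \ref{thm_completenil} for nilpotent $\ggo$, and finish with the lemma on nilpotency of $D+\ad^{\hgo}_x$. The bookkeeping with the factor $\frac{1}{2}$ that you worry about is immediate, since $\frac{1}{2}\ad^{\hgo}_x=\ad^{\hgo}_{x/2}$ and the lemma is stated for an arbitrary element of $V$, so it applies directly in both directions (taking $D=L_x$ with element $x/2$, and $D=\tilde L_x$ with element $-x/2$).
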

\begin{proof}
The first condition is equivalent to $\cdot$ being complete. From Proposition \ref{complete PLAS - complete LS} we have that this is equivalent to the left-symmetric structure determined by $\tilde L_x=L_x+\frac12\ad^{\hgo}_x$	being complete. Since $\ggo$ is nilpotent then this is equivalent to $\tilde L_x$ being nilpotent, see Theorem \ref{thm_completenil}. Now, the statement follows from the Lemma above.
\end{proof}

For all known examples of complete PLAS on a pair $(\ggo,\hgo)$ with both $\ggo$ and $\hgo$ nilpotent, we see that the right multiplication $R_y$ is also nilpotent. This raises the following question.

\begin{question}
Is a PLAS on a pair of nilpotent Lie algebras $(\ggo, \hgo)$ with underlying vector space $V$ complete if and only if the right multiplications $R_y$ are nilpotent for all $y \in V$?
\end{question}
\noindent This shows that the understanding of complete PLAS, even in the case when $\ggo$ is nilpotent, is not yet completely understood.

\section{Examples}
\label{sec_ex}

As an illustration of the main results, we exhibit in this section a complete PLAS for all pairs $(\ggo,\hgo)$ of Lie algebras with $\hgo$ $2$-step nilpotent and $\ggo$ solvable and not nilpotent up to dimension $4$. Note that the case with $\ggo$ nilpotent was already treated in \cite{bdd09-1} up to dimension $5$, which is the reason we restrict ourselves to the non-nilpotent examples. Note that in dimension $3$ this implies that $\hgo=\hgo_3$ while in dimension $4$ the only Lie algebra to be considered is $\hgo=\hgo_3\times\RR$. We illustrate that completeness is not well understood in higher nilpotency class via Example \ref{dim4_not enough}, showing that we indeed have to restrict ourselves to $\hgo$ being $2$-step nilpotent.

In Tables \ref{table: dim3} and \ref{table: dim4} we give for every $\ggo$ under consideration an embedding from $\ggo$ to $\aff(\hgo)$ in the second column, as explained in Example \ref{r'_{3,0} Nil-affine action}. The third and fourth columns show the left and right multiplication maps respectively. Recall that the left multiplication is computed using \eqref{eq: left multiplication}, and then the right multiplication can be obtained from the left one as in Remark \ref{rmk:lefttoright}. The final column then shows for which examples the new definition for completeness is needed.

\subsection{Dimension three}

An element $x=(x_1,x_2,x_3)^T \in V$ is given in the canonical basis of $\hgo_3$, with $[f_1,f_2]_{\hgo_3} = f_3$. In this basis we have that, $\frac12\ad_x=\begin{pmatrix}
	0&0&0\\
	0&0&0\\
	-\frac{x_2}{2}&\frac{x_1}{2}&0
\end{pmatrix}$. Recall that the Lie bracket in $\aff(\hgo_3)$ is given by \eqref{eq: lie bracket aff}, and the embedding of $\ggo$ into $\aff(\hgo_3)$ is described via the two maps $D: \ggo \to \Der(\hgo)$ and $t: \ggo \to \hgo$. 

We recall the Lie bracket of $3$-dimensional Lie algebras we will need in table \ref{table: dim3} in a basis $e_1, e_2, e_3$.
\begin{align*}
	\mathfrak h_3 &: [e_1, e_2] = e_3 \\ 
	\mathfrak r_3 &: [e_1, e_2] = e_2, [e_1, e_3] = e_2 +e_3 \\  
	\mathfrak r_{3,\lambda} &: [e_1, e_2] = e_2, [e_1, e_3] = \lambda e_3 \\
	\mathfrak r'_{3,\gamma} &: [e_1, e_2] = \lambda e_2 - e_3, [e_1, e_3] = e_2 + \lambda e_3 
\end{align*}

Finally, we exhibit a PLAS for all pairs $(\ggo,\hgo_3)$, coming from simply transitive NIL-affine actions in dimension $3$ in Table \ref{table: dim3}.
\begin{table}[h]
	\begin{tabular}{|c|c|c|c|c|}
		\hline
		$\ggo$ & embedding in $\aff(\hgo_3)$& $L_y = D(t^{-1}(y))$ & $R_z$ & $R_z$ nilpotent?\\
		\hline
		
		$\mathfrak r'_{3,0}$ & $(x_2,x_3,x_1),\begin{pmatrix}
			0&-x_1&0\\
			x_1&0&0\\
			\frac{x_3}{2}&-\frac{x_2}{2}&0
		\end{pmatrix}$ &  
		$\begin{pmatrix}
			0&-y_3&0\\
			y_3&0&0\\
			\frac{y_2}{2}&-\frac{y_1}{2}&0
		\end{pmatrix}$ &
		$\begin{pmatrix}
			0&0&-z_2\\
			0&0&z_1\\
			-\frac{z_2}{2}&\frac{z_1}{2}&0
		\end{pmatrix}$ & \texttimes\\
		\hline
		
		$ \mathfrak r_{3,-1}$&$(x_2,x_3,x_1),\begin{pmatrix}
			x_1&0&0\\
			0&x_1&0\\
			\frac{x_3}{2}&-\frac{x_2}{2}&0
		\end{pmatrix}$ &
		$\begin{pmatrix}
			y_3&&0\\
			0&-y_3&0\\
			\frac{y_2}{2}&-\frac{y_1}{2}&0
		\end{pmatrix}$ &
		$\begin{pmatrix}
			0&0&z_1\\
			0&0&-z_2\\
			-\frac{z_2}{2}&\frac{z_1}{2}&0
		\end{pmatrix}$&\texttimes\\
		\hline
		
		$ \mathfrak r_{3,1}$&
		$(x_1,x_2,x_3),\begin{pmatrix}
			0&0&0\\
			0&x_1&0\\
			\frac{x_3}{2}&-\frac{x_2}{2}&x_1
		\end{pmatrix}$ &
		$\begin{pmatrix}
			0&&0\\
			0&y_1&0\\
			\frac{y_2}{2}&-\frac{y_1}{2}&y_1
		\end{pmatrix}$ &
		$\begin{pmatrix}
			0&0&0\\
			z_2&0&0\\
			-\frac{z_2}{2}+z_3&\frac{z_1}{2}&0
		\end{pmatrix}$&\checkmark\\
		\hline
		
		$ \mathfrak r_3$&$(x_1,x_3,x_2),\begin{pmatrix}
			0&0&0\\
			0&x_1&0\\
			0&0&x_1
		\end{pmatrix}$ &
		$\begin{pmatrix}
			0&0&0\\
			0&y_1&0\\
			0&0&y_1
		\end{pmatrix}$ &
		$\begin{pmatrix}
			0&0&0\\
			z_2&0&0\\
			z_3&0&0
		\end{pmatrix}$&\checkmark\\
		\hline
	\end{tabular}
	\caption{Complete PLAS in dimension $3$}
\end{table}\label{table: dim3}

Note that in Table \ref{table: dim3} the right multiplications for the pairs $(\mathfrak r'_{3,0},\mathfrak h_3)$ and $(\mathfrak r_{3,-1},\mathfrak h_3)$ are not nilpotent showing that the extra term in the definition of completeness is necessary.

\subsection{Dimension four}
Similarly, we recall the Lie bracket of $4$-dimensional solvable Lie algebras and then we exhibit in Table \ref{table: dim4} a complete PLAS for all pairs $(\ggo,\hgo_3\times\RR)$.
\begin{align*}
	\mathfrak n_4 &: [e_1, e_2] = e_3, [e_1, e_3] = e_4\\
	\mathfrak r_4 &: [e_1, e_2] = e_2, [e_1, e_3] = e_2 + e_3, [e_1, e_4] = e_3 + e_4 \\
	\mathfrak r_{4,\mu} &: [e_1, e_2] = e_2, [e_1, e_3] = \mu e_3, [e_1, e_4] = e_3 + \mu e_4 \\
	\mathfrak r_{4,\mu,\lambda} &: [e_1, e_2] = e_2, [e_1, e_3] = \mu e_3, [e_1, e_4] = \lambda e_4\\
	\mathfrak r'_{4,\gamma,\delta} &: [e_1, e_2] = \gamma e_2, [e_1, e_3] = \delta e_3 -e_4, [e_1, e_4] = e_3 +\delta e_4 \\
	\mathfrak d_4 &: [e_1, e_2] = e_2, [e_1, e_3] = e_3, [e_2, e_3] = e_4 \\
	\mathfrak d_{4,\lambda} &: [e_1, e_2] = \lambda e_2, [e_1, e_3] =(1-\lambda) e_3, [e_1, e_4] = e_4, [e_2, e_3] = e_4 \\
	\mathfrak d'_{4,\lambda} &: [e_1, e_2] = \lambda e_2-e_3, [e_1, e_3] = e_2+\lambda e_3, [e_1, e_4] =-2\lambda e_4, [e_2, e_3] = e_4 \\		
	\mathfrak h_{4} &: [e_1, e_2] = e_2, [e_1, e_3] = e_2+e_3, [e_1, e_4] = 2e_4, [e_2, e_3] = e_4\\
\end{align*}
 Now we exhibit in Table \ref{table: dim4} a complete PLAS for all pairs $(\ggo,\hgo_3\times\RR)$, with the same information as above. For this, we again take the standard basis on $\hgo_3 \times \RR$, and the computations are done in a similar fashion.
 
{\tiny
	\begin{table}[h!]
		\begin{tabular}{|c|c|c|c|c|}
			\hline
			$\ggo$ & embedding in $\aff(\hgo_3\times\RR)$& $L_y=D(t^{-1}(y))$ & $R_z$ & $R_z$ nilpotent?\\
			\hline
			
			$ \mathfrak r'_{3,0}\times\RR$& $(x_2,x_3,x_1,x_4),\begin{pmatrix}
				0&-x_1&0&0\\
				x_1&0&0&0\\
				\frac{x_3}{2}&-\frac{x_2}{2}&0&0\\
				0&0&0&0
			\end{pmatrix}$ &  
			$\begin{pmatrix}
				0&-y_3&0&0\\
				y_3&0&0&0\\
				\frac{y_2}{2}&-\frac{y_1}{2}&0&0\\
				0&0&0&0
			\end{pmatrix}$ &
			$\begin{pmatrix}
				0&0&-z_2&0\\
				0&0&z_1&0\\
				-\frac{z_2}{2}&\frac{z_1}{2}&0&0\\
				0&0&0&0
			\end{pmatrix}$ & \texttimes\\
			\hline
			
			$\mathfrak r_{3,-1}\times\RR$ & $(x_2,x_3,x_1,x_4),\begin{pmatrix}
				x_1&0&0&0\\
				0&-x_1&0&0\\
				\frac{x_3}{2}&-\frac{x_2}{2}&0&0\\
				0&0&0&0
			\end{pmatrix}$ &  
			$\begin{pmatrix}
				y_3&0&0&0\\
				0&-y_3&0&0\\
				\frac{y_2}{2}&-\frac{y_1}{2}&0&0\\
				0&0&0&0
			\end{pmatrix}$ &
			$\begin{pmatrix}
				0&0&-z_1&0\\
				0&0&z_2&0\\
				-\frac{z_2}{2}&\frac{z_1}{2}&0&0\\
				0&0&0&0
			\end{pmatrix}$ & \texttimes\\
			\hline

			$ \mathfrak r_{3,1}\times\RR$&
			$(x_1,x_2,x_3,x_4),\begin{pmatrix}
				0&0&0&0\\
				0&x_1&0&0\\
				\frac{x_3}{2}&-\frac{x_2}{2}&x_1&0\\
				0&0&0&0
			\end{pmatrix}$ &
			$\begin{pmatrix}
				0&0&0&0\\
				0&y_1&0&0\\
				\frac{y_2}{2}&-\frac{y_1}{2}&y_1&0\\
				0&0&0&0
			\end{pmatrix}$ &
			$\begin{pmatrix}
				0&0&0&0\\
				z_2&0&0&0\\
				-\frac{z_2}{2}+z_3&\frac{z_1}{2}&0&0\\
				0&0&0&0
			\end{pmatrix}$&\checkmark\\
			\hline
			
			$ \mathfrak r_{3,0}\times\RR$&
			$(x_1,x_3,x_4,x_2),\begin{pmatrix}
				0&0&0&0\\
				0&0&0&0\\
				\frac{x_3}{2}&-\frac{x_1}{2}&0&0\\
				0&0&0&x_1
			\end{pmatrix}$ &
			$\begin{pmatrix}
				0&0&0&0\\
				0&0&0&0\\
				\frac{y_2}{2}&-\frac{y_1}{2}&0&0\\
				0&0&0&y_1
			\end{pmatrix}$ &
			$\begin{pmatrix}
				0&0&0&0\\
				0&0&0&0\\
				-\frac{z_2}{2}&\frac{z_1}{2}&0&0\\
				z_4&0&0&0
			\end{pmatrix}$&\checkmark\\
			\hline
			
					$\mathfrak r'_{4,\gamma,0}$ &  $(x_3,x_4,x_1,x_2),\begin{pmatrix}
				0 & x_1 & 0 & 0 \\
				-x_1 & 0 & 0 & 0 \\
				\frac{x_4}{2} & -\frac{x_3}{2} & 0 & 0 \\
				0 & 0 & 0 & \gamma x_1 \\
			\end{pmatrix}$ &
			$\begin{pmatrix}
				0&y_3&0&0\\
				-y_3&0&0&0\\
				\frac{y_2}{2}&-\frac{y_1}{2}&0&0\\
				0&0&0&\gamma y_3
			\end{pmatrix}$ &
			$\begin{pmatrix}
				0&0&z_2&0\\
				0&0&-z_1&0\\
				-\frac{z_2}{2}&\frac{z_1}{2}&0&0\\
				0&0&\gamma z_4&0
			\end{pmatrix}$ & \texttimes\\
			\hline
			
			$\mathfrak r'_{4,2\delta,\delta}$ &  $(x_3,x_4,x_2,x_1),\begin{pmatrix}
				\delta x_1 & x_1 & 0 & 0 \\
				-x_1 & \delta x_1 & 0 & 0 \\
				\frac{x_4}{2} & -\frac{x_3}{2} & 2\delta x_1 & 0 \\
				0 & 0 & 0 & 0 \\
			\end{pmatrix}$ &
			$\begin{pmatrix}
				\delta y_4 & y_4 & 0 & 0 \\
				-y_4 & \delta y_4 & 0 & 0 \\
				\frac{y_2}{2} & -\frac{y_1}{2} & 2\delta y_4 & 0 \\
				0 & 0 & 0 & 0
			\end{pmatrix}$ &
			$\begin{pmatrix}
				0&0&0&\delta z_1+z_2\\
				0&0&0&-z_1+\delta z_2\\
				-\frac{z_2}{2}&\frac{z_1}{2}&0&2\delta z_3\\
				0&0&0&0
			\end{pmatrix}$ &\checkmark\\
			\hline
			
			$\begin{matrix}\mathfrak r_{4,-1,\lambda}\\ \\ -1\leq\lambda<0
			\end{matrix}$
		& $(x_2,x_3,x_1,x_4),\begin{pmatrix}
				x_1 & 0 & 0 & 0 \\
				0 & -x_1 & 0 & 0 \\
				\frac{x_3}{2} & -\frac{x_2}{2} & 0 & 0 \\
				0 & 0 & 0 & \lambda x_1 \\
			\end{pmatrix}$ &
			$\begin{pmatrix}
				y_3&0&0&0\\
				0&y_3&0&0\\
				\frac{y_2}{2}&-\frac{y_1}{2}&0&0\\
				0&0&0&\lambda y_3
			\end{pmatrix}$ &
			$\begin{pmatrix}
				0&0&-z_2&0\\
				0&0&z_1&0\\
				-\frac{z_2}{2}&\frac{z_1}{2}&0&0\\
				0&0&\delta z_4&0
			\end{pmatrix}$ & \texttimes\\
			\hline
			
			$\begin{matrix}\mathfrak r_{4,\mu,\mu}\\  -1<\mu\leq1 \\ \mu \neq 0\end{matrix}$ &  $(x_1,x_3,x_4,x_2),\begin{pmatrix}
				0 & 0 & 0 & 0 \\
				0 & \mu x_1 & 0 & 0 \\
				\frac{x_3}{2} & -\frac{x_1}{2} & \mu x_1 & 0 \\
				0 & 0 & 0 & x_1 \\
			\end{pmatrix}$ &
			$\begin{pmatrix}
				0 & 0 & 0 & 0 \\
				0 & \mu y_1 & 0 & 0 \\
				\frac{y_2}{2} & -\frac{y_1}{2} & \mu y_1 & 0 \\
				0 & 0 & 0 & y_1 
			\end{pmatrix}$ &
			$\begin{pmatrix}
				0&0&0&0\\
				\mu z_2&0&0&0\\
				\mu z_3-\frac{z_2}{2}&\frac{z_1}{2}&0&0\\
				z_4&0&0&0
			\end{pmatrix}$ & \checkmark\\
			\hline
			
			$\begin{matrix}\mathfrak r_{4,\mu,-\mu} \\-1 < \mu < 0\end{matrix}$ &  
			$(x_3,x_4,x_1,x_2),\begin{pmatrix}
				\mu x_1 & 0 & 0 & 0 \\
				0 & -\mu x_1 & 0 & 0 \\
				\frac{x_4}{2} & -\frac{x_3}{2} & 0 & 0 \\
				0 & 0 & 0 & x_1 \\
			\end{pmatrix}$ &
			$\begin{pmatrix}
				\mu y_3 & 0 & 0 & 0 \\
				0 & -\mu y_3 & 0 & 0 \\
				\frac{y_2}{2} & -\frac{y_1}{2} & 0 & 0 \\
				0 & 0 & 0 & y_3
			\end{pmatrix}$ &
			$\begin{pmatrix}
				0&0&\mu z_1&0\\
				0&0&-\mu z_2&0\\
				-\frac{z_2}{2}&\frac{z_1}{2}&0&0\\
				0&0&z_4&0
			\end{pmatrix}$ &\texttimes\\
			\hline
			
			$\begin{matrix}\mathfrak r_{4,\mu,1} \\-1 < \mu <1 \\ \mu \neq 0\end{matrix}$ & 
			$(x_2,x_1,x_4,x_3),\begin{pmatrix}
				x_1 & 0 & 0 & 0 \\
				0 & 0 & 0 & 0 \\
				\frac{x_1}{2} & -\frac{x_2}{2} & x_1 & 0 \\
				0 & 0 & 0 & \mu x_1 \\
			\end{pmatrix}$ &
			$\begin{pmatrix}
				y_2& 0 & 0 & 0 \\
				0 & 0 & 0 & 0 \\
				\frac{y_2}{2} & -\frac{y_1}{2} & y_2 & 0 \\
				0 & 0 & 0 & \mu y_2
			\end{pmatrix}$ &
			$\begin{pmatrix}
				0&z_1&0&0\\
				0&0&0&0\\
				-\frac{z_2}{2}&\frac{z_1}{2}+z_3&0&0\\
				0&\mu z_4&0&0
			\end{pmatrix}$ &\checkmark\\
			\hline
			
			$\begin{matrix}\mathfrak r_{4,\mu,1-\mu} \\ \\ 0<\mu<\frac12\end{matrix}$ & 
			$(x_3,x_4,x_2,x_1),\begin{pmatrix}
				\mu x_1 & 0 & 0 & 0 \\
				0 & (1-\mu)x_1 & 0 & 0 \\
				\frac{x_4}{2} & -\frac{x_3}{2} & x_1 & 0 \\
				0 & 0 & 0 & 0 \\
			\end{pmatrix}$ &
			$\begin{pmatrix}
				\mu y_4& 0 & 0 & 0 \\
				0 & (1-\mu)y_4 & 0 & 0 \\
				\frac{y_2}{2} & -\frac{y_1}{2} & y_4 & 0 \\
				0 & 0 & 0 & 0
			\end{pmatrix}$ &
			$\begin{pmatrix}
				0&0&0&\mu z_1\\
				0&0&0&(1-\mu)z_2\\
				-\frac{z_2}{2}&\frac{z_1}{2}&0&z_3\\
				0&0&0&0
			\end{pmatrix}$ &\checkmark\\
			\hline
			
			$\begin{matrix}\mathfrak r_{4,\mu,1+\mu} \\ -1<\mu<0 \\ \mu\neq-\frac12\end{matrix}$ &  
			$(x_2,x_3,x_4,x_1),\begin{pmatrix}
				x_1 & 0 & 0 & 0 \\
				0 & \mu x_1 & 0 & 0 \\
				\frac{x_3}{2} & -\frac{x_2}{2} & (1+\mu)x_1 & 0 \\
				0 & 0 & 0 & 0 \\
			\end{pmatrix}$ &
			$\begin{pmatrix}
				 y_4& 0 & 0 & 0 \\
				0 & \mu y_4 & 0 & 0 \\
				\frac{y_2}{2} & -\frac{y_1}{2} & (1+\mu)y_4 & 0 \\
				0 & 0 & 0 & 0
			\end{pmatrix}$ &
			$\begin{pmatrix}
				0&0&0& z_1\\
				0&0&0&\mu z_2\\
				-\frac{z_2}{2}&\frac{z_1}{2}&0&(1+\mu)z_3\\
				0&0&0&0
			\end{pmatrix}$ &\checkmark\\
			\hline
			
			$\mathfrak r\mathfrak r_3$  &  
			$(x_1,x_3,x_2,x_4),\begin{pmatrix}
				0&0&0&0\\
				0&x_1&0&0\\
				0&0&x_1&0\\
				0&0&0&0
			\end{pmatrix}$ &
			$\begin{pmatrix}
				0&0&0&0\\
				0&y_1&0&0\\
				0&0&y_1&0\\
				0&0&0&0
			\end{pmatrix}$ &
			$\begin{pmatrix}
				0&0&0&0\\
				z_2&0&0&0\\
				z_3&0&0&0\\
				0&0&0&0
			\end{pmatrix}$&\checkmark\\
			\hline
			
			$\mathfrak r_{4,\lambda}$  & 
			$(x_1,x_4,x_3,x_2),\begin{pmatrix}
				0&0&0&0\\
				0& \lambda x_1&0&0\\
				0&0&\lambda x_1&0\\
				0&0&0&x_1
			\end{pmatrix}$ &
			$\begin{pmatrix}
				0&0&0&0\\
				0&\lambda y_1&0&0\\
				0&0&\lambda y_1&0\\
				0&0&0&y_1
			\end{pmatrix}$ &
			$\begin{pmatrix}
				0&0&0&0\\
				\lambda z_2&0&0&0\\
				\lambda z_3&0&0&0\\
				z_4&0&0&0
			\end{pmatrix}$&\checkmark\\
			\hline
			
			$\mathfrak d_4$  & 
			$(x_2,x_3,x_4,x_1),\begin{pmatrix}
				x_1&0&0&0\\
				0&-x_1&0&0\\
				0&0&0&0\\
				0&0&0&0
			\end{pmatrix}$ &
			$\begin{pmatrix}
				y_4&0&0&0\\
				0&-y_4&0&0\\
				0&0&0&0\\
				0&0&0&0
			\end{pmatrix}$ &
			$\begin{pmatrix}
				0&0&0&z_1\\
				0&0&0&-z_2\\
				0&0&0&0\\
				0&0&0&0
			\end{pmatrix}$&\checkmark\\
			\hline
			
			$\mathfrak d_{4,\lambda}$  & 
			$(x_2,x_3,x_4,x_1),\begin{pmatrix}
				\lambda x_1&0&0&0\\
				0&(1-\lambda)x_1&0&0\\
				0&0&x_1&0\\
				0&0&0&0
			\end{pmatrix}$ &
			$\begin{pmatrix}
				\lambda y_4&0&0&0\\
				0&(1-\lambda)y_4&0&0\\
				0&0&y_4&0\\
				0&0&0&0
			\end{pmatrix}$ &
			$\begin{pmatrix}
				0&0&0&\lambda z_1\\
				0&0&0&(1-\lambda)z_2\\
				0&0&0&z_3\\
				0&0&0&0
			\end{pmatrix}$&\checkmark\\
			\hline
			
			$\mathfrak d'_{4,\lambda}$  &  
			$(x_2,x_3,x_4,x_1), \begin{pmatrix}
				\lambda x_1 & x_1 & 0 & 0 \\
				-x_1 & \lambda x_1 & 0 & 0 \\
				0 & 0 & 2\lambda x_1 & 0 \\
				0 & 0 & 0 & 0 \\
			\end{pmatrix}$ &
			$\begin{pmatrix}
				\lambda x_4 & x_4 & 0 & 0 \\
				-x_4 & \lambda x_4 & 0 & 0 \\
				0 & 0 & 2\lambda x_4 & 0 \\
				0 & 0 & 0 & 0 \\
			\end{pmatrix}$ &
			$\begin{pmatrix}
				0&0&0&\lambda z_1+z_2\\
				0&0&0&-z_1+\lambda z_2\\
				0&0&0&2\lambda z_3\\
				0&0&0&0
			\end{pmatrix}$&\checkmark\\
			\hline
			
			$\mathfrak{r}_{4}$  &  
			$(x_1,x_4,x_2,x_3), \begin{pmatrix}
				0 & x_1 & 0 & 0 \\
				0 &  x_1 & 0 & 0 \\
				0 & -x_1 &  x_1 & x_1 \\
				0 & x_1 & 0 & x_1 \\
			\end{pmatrix}$ &
			$\begin{pmatrix}
				0 & y_1 & 0 & 0 \\
				0 &  y_1 & 0 & 0 \\
				0 & -y_1 &  y_1 & y_1 \\
				0 & y_1 & 0 & y_1 \\
			\end{pmatrix}$ &
			$\begin{pmatrix}
				0&0&0&0\\
				z_2&0&0&0\\
				-z_2+z_3+z_4&0&0&0\\
				z_2+z_4&0&0&0
			\end{pmatrix}$&\checkmark\\
			\hline
			
			$\mathfrak{h}_{4}$  & 
			$(x_3,-x_2,x_4,x_1), \begin{pmatrix}
				x_1 & 0 & 0 & 0 \\
				-x_1 &  x_1 & 0 & 0 \\
				0 & 0 &  2x_1 & 0 \\
				0 & 0 & 0 & 0 \\
			\end{pmatrix}$ &
			$\begin{pmatrix}
				y_4 & 0 & 0 & 0 \\
				-y_4 &  y_4 & 0 & 0 \\
				0 & 0 &  2y_4 & 0 \\
				0 & 0 & 0 & 0 \\
			\end{pmatrix}$ &
			$\begin{pmatrix}
				0&0&0&z_1\\
				0&0&0&-z_1+z_2\\
				0&0&0&2z_3\\
				0&0&0&0
			\end{pmatrix}$&\checkmark\\
			\hline
		\end{tabular}
		\caption{Complete PLAS in dimension $4$}
	\end{table}\label{table: dim4}
}

\

Finally, we illustrate now that the notion  completeness as in Definition \ref{complete PLAS 2step nil} is not enough to induce a simply transitive NIL-affine action when $\hgo$ is $3$-step nilpotent (or more).

\begin{example}\label{dim4_not enough}
Let $\ggo=\mathfrak r_{4,-1/2,1/2}$ and $\hgo=\ngo_4$ the $3$-step nilpotent Lie algebra in dimension $4$, with basis $f_1, f_2, f_3, f_4$ and Lie bracket $[f_1,f_2]=f_3$ and $[f_1,f_3] = f_4$. It can be shown that a derivation of $\ngo_4$ in this basis has the following form $\begin{pmatrix}
	a&0&0&0\\
	b&e&0&0\\
	c&f&a+e&0\\
	d&g&f&2a+e
\end{pmatrix}$.
Using \cite{DerOri21} we know that there exist a simply transitive NIL-affine action from $G$ on $H$, moreover we can construct an embedding from $\ggo$ to $\hgo$ by
	$$(x_1,x_2,x_3,x_4)\to\left( (x_3,x_2,x_4,x_1),\begin{pmatrix}
	-\frac{x_1}2&0&0&0\\
	0&x_1&0&0\\
	0&-x_3&\frac{x_1}2&0\\
	0&0&-x_3&0
\end{pmatrix}\right)$$
which induces such a simply transitive NIL-affine action from $G$ on $H$.
Then, the left multiplications of the PLAS can be computed using \eqref{eq: left multiplication} 
$$L_x=\begin{pmatrix}
	-\frac12x_4&0&0&0\\
	0&x_4&0&0\\
	0&-x_1&\frac12x_4&0\\
	0&0&-x_1&0
\end{pmatrix},$$
and the right multiplications are given by
$$R_y=\begin{pmatrix}
	0&0&0&-\frac12y_1\\
	0&0&0&y_2\\
	-y_2&0&0&\frac12 y_3\\
	-y_3&0&0&0
\end{pmatrix}.$$
Note that $R_y$ are not nilpotent for all $y$. 
Moreover, $\ad_y^\hgo=\begin{pmatrix}
	0&0&0&0\\
	0&0&0&0\\
	-y_2&y_1&0&0\\
	-y_3&0&y_1&0
\end{pmatrix},$ and then we can check that $$R_y-\frac12\ad^\hgo_y = \begin{pmatrix}
0&0&0&-\frac12y_1\\
0&0&0&y_2\\
-\frac12 y_2&-\frac12 y_1&0&\frac12 y_3\\
-\frac12y_3&0&-\frac12y_1&0
\end{pmatrix}$$ are not nilpotent for all $y$, since $\frac12(y_1^2y_2)^{1/3}$ is an eigenvalue. Therefore the definitions of complete PLAS on a pair $(\ggo,\hgo)$ will depend on the nilpotency class of $\hgo$.
\end{example}


\bibliographystyle{plain}
\bibliography{ref}

\end{document}